\documentclass{amsart}

\usepackage[all,hyperref]{paper_diening}

\usepackage{tikz}
\usetikzlibrary{calc}

\usepackage{mathtools}

\providecommand{\trSpace}{\Gamma(\partial \mathcal{T})}

\providecommand{\oQ}{\overline{Q}}

\providecommand{\snorm}[1]{{\left\vert\kern-0.25ex\left\vert\kern-0.25ex\left\vert #1   \right\vert\kern-0.25ex\right\vert\kern-0.25ex\right\vert}}

%% ------------------------------------------------------------
%% Commands for NVB

%{{<}\mspace{-8mu}{\lhd}}%{\llcurly}
 %{\triangleright}
%{{\rhd}\mspace{-8mu}{>}}{%\ggcurly}
 %{\triangleleft}

 %%
\DeclareMathOperator{\erf}{erf}

\usepackage{tikz}
\usepackage{pgfplots}
\pgfplotsset{
  width=.65\linewidth,
  axis background/.style={fill=black!5!white},
  grid style={densely dotted,semithick},
  legend style={
    legend columns=1,
    legend pos=outer north east
  },
  compat=newest % compatibility for old pgfplots versions
}
\pgfplotscreateplotcyclelist{MyColors}{%
    {red,mark = *,every mark/.append style={solid,scale=0.4,fill=red}},
    {teal,mark = x,every mark/.append style={solid,scale=0.4,fill=teal}},
    {blue,mark = square*,every mark/.append style={solid,scale=0.4,fill=blue}},
{dotted,red,mark = *,every mark/.append style={solid,scale=0.4,fill=red}},
    {dotted,teal,mark = x,every mark/.append style={solid,scale=0.4,fill=teal}},
    {dotted,blue,mark = square*,every mark/.append style={solid,scale=0.4,fill=blue}},
{dash dot,red,mark = *,every mark/.append style={solid,scale=0.4,fill=red}},
    {dash dot,teal,mark = x,every mark/.append style={solid,scale=0.4,fill=teal}},
    {dash dot,blue,mark = square*,every mark/.append style={solid,scale=0.4,fill=blue}},}

\pgfplotscreateplotcyclelist{MyColors1}{%
    {red,mark = *,every mark/.append style={solid,scale=0.4,fill=red}},
    {teal,mark = x,every mark/.append style={solid,scale=0.4,fill=teal}},
    {red,dotted,mark = *,every mark/.append style={solid,scale=0.2,fill=red}},
    {teal,dotted,mark = x,every mark/.append style={solid,scale=0.2,fill=teal}},}
    
    \pgfplotscreateplotcyclelist{MyColors1b}{%
    {red,mark = *,every mark/.append style={solid,scale=0.4,fill=red}},
    {teal,mark = x,every mark/.append style={solid,scale=0.4,fill=teal}},    {teal,mark = x,every mark/.append style={solid,scale=0.4,fill=teal}},
    {red,dotted,mark = *,every mark/.append style={solid,scale=0.2,fill=red}},
    {teal,dotted,mark = x,every mark/.append style={solid,scale=0.2,fill=teal}},}
    \pgfplotscreateplotcyclelist{MyColors1bc}{%
    {red,mark = *,every mark/.append style={solid,scale=0.4,fill=red}},
    {teal,mark = x,every mark/.append style={solid,scale=0.4,fill=teal}},    
    {teal,mark = x,every mark/.append style={solid,scale=0.4,fill=teal}},
        {teal,mark = x,every mark/.append style={solid,scale=0.4,fill=teal}},
    {teal,mark = x,every mark/.append style={solid,scale=0.4,fill=teal}},
    {red,dotted,mark = *,every mark/.append style={solid,scale=0.2,fill=red}},
    {teal,dotted,mark = x,every mark/.append style={solid,scale=0.2,fill=teal}},}
    
        \pgfplotscreateplotcyclelist{MyColors1c}{%
    {red,mark = *,every mark/.append style={solid,scale=0.4,fill=red}},
    {teal,mark = x,every mark/.append style={solid,scale=0.4,fill=teal}},    {teal,mark = x,every mark/.append style={solid,scale=0.4,fill=teal}},    {teal,mark = x,every mark/.append style={solid,scale=0.4,fill=teal}},
    {red,dotted,mark = *,every mark/.append style={solid,scale=0.2,fill=red}},
    {teal,dotted,mark = x,every mark/.append style={solid,scale=0.2,fill=teal}},}

  %% LARS: I have included bigcdot in the newest version of paper_diening
  
% %  ------------------------------------------------------------
%   \makeatletter
%   \newcommand*{\bigcdot}{}% Check if undefined
%   \DeclareRobustCommand*{\bigcdot}{%
%     \mathbin{\mathpalette\bigcdot@{}}%
%   }
%   \newcommand*{\bigcdot@scalefactor}{.6}
%   \newcommand*{\bigcdot@widthfactor}{1.15}
%   \newcommand*{\bigcdot@}[2]{%
%     % #1: math style
%     % #2: unused
%     \sbox0{$#1\vcenter{}$}% math axis
%     \sbox2{$#1\cdot\m@th$}%
%     \hbox to \bigcdot@widthfactor\wd2{%
%       \hfil
%       \raise\ht0\hbox{%
%         \scalebox{\bigcdot@scalefactor}{%
%           \lower\ht0\hbox{$#1\bullet\m@th$}%
%         }%
%       }%
%       \hfil
%     }%
%   }
%   \makeatother

\makeatletter
\@namedef{subjclassname@2020}{%
  \textup{2020} Mathematics Subject Classification}
\makeatother

\setcounter{tocdepth}{1}

\begin{document}

\author[L.\ Diening]{Lars Diening}
\author[J.\ Storn]{Johannes Storn}
\address[L. Diening and J. Storn]{Department of Mathematics, University of Bielefeld, Postfach 10 01 31, 33501 Bielefeld, Germany}
\email{lars.diening@uni-bielefeld.de}
\email{jstorn@math.uni-bielefeld.de}

\thanks{This research was supported by the DFG through the CRC 1283 ``Taming uncertainty and profiting from randomness and low regularity in analysis, stochastics and their applications"}
\subjclass[2020]{
35K20, %Initial-boundary value problems for second-order parabolic equations
65M12, %  	Stability and convergence of numerical methods for initial value and initial-boundary value problems involving PDEs
65M15, %Error bounds for initial value and initial-boundary value problems involving PDEs
65M50,
65M60, %Finite element, Rayleigh-Ritz and Galerkin methods for initial value and initial-boundary value problems involving PDEs
}
\keywords{Parabolic PDEs, space-time FEM, DPG method, adaptivity}

\title{A Space-Time DPG Method for the Heat Equation}

\begin{abstract}
This paper introduces an ultra-weak space-time DPG method for the heat equation. We prove well-posedness of the variational formulation with broken test functions and verify quasi-optimality of a practical DPG scheme. Numerical experiments visualize beneficial properties of an adaptive and parabolically scaled mesh-refinement driven by the built-in error control of the DPG method.
\end{abstract}
\maketitle

\section{Introduction}
This paper introduces an ultra-weak space-time discontinuous Petrov--Galerkin (DPG) method for the heat equation. 
More precisely, we reformulate the heat equation as a variational problem with discontinuous test functions. 
We prove the well-posedness of the variational problem and investigate a corresponding practical DPG scheme. 
The underlying mesh consists of time-space cylinders and allows for local adaptive mesh refinements in space-time. Our investigation shows quasi-optimality of the numerical approximation and leads to a reliable and efficient error estimator. We motivate the use of parabolically scaled mesh refinements (for which our mesh is well suited) and conclude our investigation with numerical experiments. These experiments indicate that our adaptive scheme is, in particular for non-smooth initial data, superior to state-of-the-art space-time methods.

Compared to classical time-marching schemes, simultaneous space-time finite element methods lead in general to significantly larger systems of equations. On the other hand, such schemes allow for massively parallel computations and quasi-optimal a priori estimates (a necessary requirement for optimally convergent adaptive schemes). Moreover, they are well-suited for adaptive mesh refinements simultaneously in space and time.
These advantages have led to a growing interest in space-time methods for parabolic problems, see for example \cite{Steinbach15,LangerMooreNeumueller16,Moore18}. A challenging difficulty is that known well-posed simultaneous space-time variational formulations of parabolic equations are not coercive. Hence, the resulting Petrov--Galerkin schemes require pairs of discrete trial and test spaces that satisfy the so-called Ladyzhenskaya--Babu\v{s}ka--Brezzi condition. Since the design of such pairs is nontrivial, Andreev suggests a minimal residual Petrov--Galerkin discretization \cite{Andreev13}, see also \cite{StevensonWesterdiep20}. An alternative approach are least-squares finite element methods (LSFEMs), suggested in \cite{FuehrerKarkulik19} and \cite{GantnerStevenson20}. 
LSFEMs are closely related to DPG schemes \cite{Storn20}. However, the DPG method has some beneficial properties: it allows for a very flexible design of variational problems (see for example \cite{FuehrerHeuerSayay20}), might reduce to the discretization of traces (which simplifies the design of finite elements, see for example \cite{FuehrerHeuerNiemi19}), and leads to instant stable adaptive schemes resulting in good rates of convergence even in pre-asymptotic regimes \cite{BroersenDahmenStevenson18,DahmenStevenson19,PetridesDemkowicz17}.
These advantages motivate the investigation of space-time DPG methods. While space-time DPG methods for hyperbolic problems \cite{ErnestiWieners19,GopalakrishnanSepulveda19} and the Schr\"odinger equation \cite{DemkowiczGopalakrishnanNagarajSepulveda17} are already established, the existing literature on DPG methods for parabolic problems is, besides the numerical study in \cite{EllisDemkowiczChanMoser14}, limited to time-stepping schemes \cite{FuehrerHeuerNiemi17,RobertsHenneking20,FuehrerHeuerKarkulik21}.
Hence, this paper provides the first analytical foundation of space-time DPG methods for parabolic problems. To simplify the presentation as much as possible, we focus on the heat equation, even so most arguments extend to more general parabolic PDEs with ideas from \cite{GantnerStevenson20}. 

This paper is organized as follows.
Section \ref{sec:prob} reformulates the heat equation as a first-order system and analyzes the resulting system with techniques from \cite{GantnerStevenson20}. 
Section \ref{sec:DesignDPG} designs an ultra-weak DPG method that bases on the first-order system from Section \ref{sec:prob}.
The resulting variational problem is analyzed in Section \ref{sec:AnalysisDPGprob}. Section \ref{sec:abstrDiscrProb} briefly recalls abstract results from \cite{CarstensenDemkowiczGopalakrishnan14,CarstensenDemkowiczGopalakrishnan16,GopalakrishnanQiu14} for the (practical) DPG method.
These abstract results rely on the existence of a Fortin operator.
Section \ref{sec:Discretization} introduces a discretization and designs a suitable Fortin operator. 
The operator norm of the Fortin operator (and thus the DPG scheme) is stable for equally scaled meshes (the length of each cell in the mesh in time direction is equivalent to the diameter of the cell in space direction) and parabolically scaled meshes (the length of each cell in time direction is equivalent to the squared diameter of the cell in space direction). Section \ref{sec:EquiVsPara} and the numerical experiments in Section \ref{sec:NumExp} show that the latter scaling is mandatory for singular solutions. In addition, the numerical experiments in Section \ref{sec:NumExp} investigate an adaptive algorithm, driven by the built-in error control of the DPG method.
%Numerical experiments in Section \ref{sec:NumExp} underline the theoretical results and investigate an adaptive algorithm, driven by the built-in error control of the DPG method.

\section{Heat Equation}\label{sec:prob}
Given a time-space cylinder $Q = \mathcal{I} \times \Omega$ with time-interval $\mathcal{I} = (0,T_\textup{end}) \subset \mathbb{R}$ and bounded Lipschitz domain $\Omega \subset \mathbb{R}^d$ with boundary $\partial \Omega$ as well as a right-hand side $f\in L^2(Q)$ and some initial data $u_0 \in L^2(\Omega)$, the heat equation seeks the solution $u:Q\to \mathbb{R}$ to 
\begin{align}\label{eq:secOrder}
\partial_t u - \Delta_x u  = f\text{ in }Q,\qquad
u(0,\bigcdot) = u_0 \text{ in }\Omega,\qquad
u  = 0\text{ on }\mathcal{I}\times \partial \Omega.
\end{align}
Let $\nabla_x = (\partial_{x_1},\dots,\partial_{x_d})$ denote the gradient with respect to the space directions $x_1,\dots,x_d$ and set the divergence operator $\textup{div}\, (v,\tau) \coloneqq \partial_t v + \textup{div}_x\, \tau = \partial_t v + \partial_{x_1}\tau_1 + \dots +\partial_{x_d} \tau_d$ for scalar valued functions $v$ and vector valued functions $\tau = (\tau_1,\dots,\tau_d)$. The new unknown $\sigma \coloneqq - \nabla_x u$ leads in \eqref{eq:secOrder} to the first-order system
\begin{align}\label{eq:FOS} 
\begin{aligned}
\textup{div}\, (u,\sigma) & = f&&\text{in }Q,
&\qquad\nabla_x u + \sigma & = 0&&\text{in }Q,\\
\qquad u(0,\bigcdot) & = u_0 &&\text{in }\Omega,
&\qquad u & = 0&&\text{on }\mathcal{I}\times \partial \Omega.
\end{aligned}
\end{align}
The differential operators in \eqref{eq:FOS} are understood in a weak sense and lead to the space
\begin{align*}
U \coloneqq \big\{ (v,\tau) \in L^2(Q) \times L^2(Q;\mathbb{R}^d)\mid \nabla_x v \in L^2(Q;\mathbb{R}^d)\text{ and } \textup{div}\,(v,\tau)\in L^2(Q)\big\}.
\end{align*}
The space $U$ is a Hilbert space with norm, for all $(v,\tau)\in U$, 
\begin{align*}
\lVert (v,\tau) \rVert_U^2 \coloneqq \lVert \nabla_x v\rVert_{L^2(Q)}^2 + \lVert (v,\tau) \rVert_{L^2(Q)}^2 + \lVert \textup{div}\,(v,\tau) \rVert_{L^2(Q)}^2,
\end{align*}
where $\lVert \bigcdot \rVert_{L^2(Q)}^2 \coloneqq \langle \bigcdot ,\bigcdot\rangle_{L^2(Q)} \coloneqq \int_Q \bigcdot \cdot \bigcdot\,\mathrm{d}x$ denotes the squared $L^2$ norm.
We include the boundary condition $u = 0$ on $\mathcal{I} \times \partial\Omega$ by testing with non-compactly supported functions. This leads to the subspace $U_0\subset U$. More precisely, let $C^\infty(\oQ;\mathbb{R}^{d})$ denote the space of infinitely often differentiable vector-valued functions on the closure $\oQ$ of $Q$, then a function $\bfv = (v,\tau)$ is in $U_0$ if and only if $(v,\tau) \in U$ and
\begin{align}\label{eq:defU0}
\langle \nabla_x v ,\chi \rangle_{L^2(Q)} &= - \langle v,\textup{div}_x\,\chi\rangle_{L^2(Q)}\qquad\text{for all }\chi \in C^\infty(\oQ;\mathbb{R}^d).
\end{align}
Simple arguments (hence we skip the proof) yield the following alternative characterization of $U$ and $U_0$.
\begin{lemma}[Relation to Bochner spaces]\label{lem:BochnerSpaces}
The Sobolev spaces $U$ and $U_0$ equal the Bochner spaces from \cite{GantnerStevenson20}, that is
\begin{align*}
U &= \lbrace (v,\tau) \in L^2(\mathcal{I};H^1(\Omega)) \times L^2(Q;\mathbb{R}^d)\mid \textup{div}\,(v,\tau)\in L^2(Q)\rbrace,\\
U_0 & = \lbrace (v,\tau) \in L^2(\mathcal{I};H_0^1(\Omega)) \times L^2(Q;\mathbb{R}^d)\mid \textup{div}\,(v,\tau)\in L^2(Q)\rbrace.
\end{align*}
\end{lemma}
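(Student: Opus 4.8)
The plan is to show the two set equalities by proving mutual inclusions, where in each case the nontrivial direction is identifying the weak $x$-gradient condition (resp. the vanishing trace condition \eqref{eq:defU0}) with membership in $L^2(\mathcal{I};H^1(\Omega))$ (resp. $L^2(\mathcal{I};H^1_0(\Omega))$). The divergence condition $\textup{div}\,(v,\tau)\in L^2(Q)$ appears verbatim on both sides, so it plays no role and can be carried along untouched; the whole argument concerns only the first component $v$ and its spatial regularity.

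First I would treat $U$. Given $(v,\tau)\in U$, so $v\in L^2(Q)$ with $\nabla_x v\in L^2(Q;\mathbb{R}^d)$, the point is that $L^2(Q)=L^2(\mathcal{I};L^2(\Omega))$ and likewise for the gradient via Fubini, and that for a.e.\ $t\in\mathcal{I}$ the slice $v(t,\bigcdot)$ lies in $L^2(\Omega)$ with distributional gradient $\nabla_x v(t,\bigcdot)\in L^2(\Omega;\mathbb{R}^d)$, hence in $H^1(\Omega)$; measurability of $t\mapsto v(t,\bigcdot)$ as an $H^1(\Omega)$-valued map follows because it is the pointwise a.e.\ limit (or from separability, a weak limit) of the corresponding smooth/step approximations, and $\int_\mathcal{I}\lVert v(t,\bigcdot)\rVert_{H^1(\Omega)}^2\,\mathrm{d}t = \lVert v\rVert_{L^2(Q)}^2 + \lVert\nabla_x v\rVert_{L^2(Q)}^2<\infty$ gives the Bochner integrability. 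Conversely, if $v\in L^2(\mathcal{I};H^1(\Omega))$ then unwinding the Bochner norm shows $v,\nabla_x v\in L^2(Q)$, and testing against $\phi\in C_c^\infty(Q)$ and applying Fubini slicewise recovers that $\nabla_x v$ is the distributional $x$-gradient of $v$ on $Q$. This settles $U$.

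For $U_0$, the only new ingredient is matching condition \eqref{eq:defU0} — the integration-by-parts identity tested against all $\chi\in C^\infty(\oQ;\mathbb{R}^d)$, i.e.\ allowing $\chi$ nonzero on $\mathcal{I}\times\partial\Omega$ — with the slicewise statement $v(t,\bigcdot)\in H^1_0(\Omega)$ for a.e.\ $t$. For the forward direction I would fix $t$ and test \eqref{eq:defU0} with $\chi$ of tensor form $\chi(t,x)=\psi(t)\,\eta(x)$, $\psi\in C_c^\infty(\mathcal{I})$, $\eta\in C^\infty(\overline\Omega;\mathbb{R}^d)$; since $\psi$ is an arbitrary test function in time this yields, for a.e.\ $t$, $\langle\nabla_x v(t,\bigcdot),\eta\rangle_{L^2(\Omega)} = -\langle v(t,\bigcdot),\textup{div}_x\eta\rangle_{L^2(\Omega)}$ for all such $\eta$, which is exactly the weak formulation characterizing $v(t,\bigcdot)\in H^1_0(\Omega)$ (the trace vanishes). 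The converse is the same computation run backwards together with Fubini. A mild care point is the a.e.-in-$t$ exceptional null sets produced for each $\eta$: one handles this by choosing a countable dense family of $\eta$'s (using separability of $C^\infty(\overline\Omega;\mathbb{R}^d)$ in, say, the $H^1$-type norm relevant for the pairing) and taking the union of the corresponding null sets, then passing to the limit.

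The main obstacle — and the reason the paper says ``simple arguments, hence we skip the proof'' — is not any single hard estimate but the bookkeeping of measurability and the slicing/Fubini arguments: verifying that $t\mapsto v(t,\bigcdot)$ is strongly measurable as a Banach-space-valued map, and handling the ``for a.e.\ $t$, for all test functions'' quantifier exchange via a countable dense subset. Everything else is a direct unpacking of the definitions of the Bochner norm, the graph norm of $U$, and the tensor-product density of smooth functions, so I expect the proof to be short once those standard facts are invoked.
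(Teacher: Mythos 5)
The paper gives no proof of this lemma (it is explicitly skipped as ``simple arguments''), so there is nothing to compare against except the intended standard argument, and your proposal is a correct fleshing-out of exactly that: Fubini/slicing with tensor-product test functions and a countable dense family of spatial test fields identifies the weak $x$-gradient condition on $Q$ with membership in $L^2(\mathcal{I};H^1(\Omega))$, and condition \eqref{eq:defU0} with slicewise membership in $H^1_0(\Omega)$. The only ingredient worth stating explicitly is the characterization you invoke for the $U_0$ part, namely that for a bounded Lipschitz domain a function $w\in H^1(\Omega)$ satisfies $\langle \nabla_x w,\eta\rangle_{L^2(\Omega)}=-\langle w,\textup{div}_x\,\eta\rangle_{L^2(\Omega)}$ for all $\eta\in C^\infty(\overline{\Omega};\mathbb{R}^d)$ if and only if $w\in H^1_0(\Omega)$; this uses the trace theorem (zero trace implies $H^1_0$ on Lipschitz domains) and density of normal traces of smooth vector fields, both standard, so your argument goes through.
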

The following lemma from \cite[Lem.\ 2]{AntonicBurazinVrdoljak13} and  \cite[Lem.\ 2.1]{GantnerStevenson20} bounds the dual norm 
$\lVert \bigcdot \rVert_{L^2(\mathcal{I};H^{-1}(\Omega))}$ of $\lVert \bigcdot \rVert_{L^2(\mathcal{I};H^{1}_0(\Omega))} = \lVert \nabla_x\, \bigcdot\rVert_{L^2(Q)}$ for the time derivative $\partial_t v$ and so leads to a further characterization of the space $U_0$.
The lemma involves the Friedrichs constant $C_\Omega< \infty$ with
\begin{align}\label{eq:Frieidrichs}
\lVert \xi \rVert^2_{L^2(\Omega)} \leq C_\Omega \lVert \nabla_x \xi \rVert^2_{L^2(\Omega)}\qquad\text{for all }\xi \in H^1_0(\Omega).
\end{align}
\begin{lemma}[Control of time derivative]\label{lem:Embedding}
All $\bfv \coloneqq (v,\tau)\in U_0$ satisfy
\begin{align*}
\lVert \partial_t v \rVert^2_{L^2(\mathcal{I};H^{-1}(\Omega))} \leq 2\, \max\lbrace 1,C_\Omega\rbrace\,  \lVert\bfv \rVert^2_U. 
\end{align*}
\end{lemma}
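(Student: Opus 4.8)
The plan is to invoke the Bochner-space characterization of Lemma~\ref{lem:BochnerSpaces} to give $\partial_t v$ a meaning and then to reduce the assertion to a pointwise-in-time duality estimate closed by the Friedrichs inequality \eqref{eq:Frieidrichs}. Concretely, for $\bfv = (v,\tau) \in U_0$ Lemma~\ref{lem:BochnerSpaces} yields $v \in L^2(\mathcal{I};H^1_0(\Omega))$, $\tau\in L^2(Q;\mathbb{R}^d)$, and $\textup{div}\,(v,\tau) = \partial_t v + \textup{div}_x\,\tau \in L^2(Q)$. Since $\textup{div}_x$ maps $L^2(Q;\mathbb{R}^d)$ boundedly into $L^2(\mathcal{I};H^{-1}(\Omega))$, with $\lVert \textup{div}_x\,\tau(t)\rVert_{H^{-1}(\Omega)} \leq \lVert \tau(t)\rVert_{L^2(\Omega)}$ for a.e.\ $t\in\mathcal{I}$, and since $L^2(Q) \hookrightarrow L^2(\mathcal{I};H^{-1}(\Omega))$, the distributional identity $\partial_t v = \textup{div}\,(v,\tau) - \textup{div}_x\,\tau$ already shows $\partial_t v \in L^2(\mathcal{I};H^{-1}(\Omega))$; it then only remains to estimate its norm.

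For this I would fix $\phi \in H^1_0(\Omega)$ and a.e.\ $t\in\mathcal{I}$ and integrate by parts in the space variables (the boundary term drops because $\phi$ has vanishing trace on $\partial\Omega$) to obtain
\begin{align*}
\langle \partial_t v(t),\phi\rangle_{H^{-1}(\Omega)\times H^1_0(\Omega)} = \langle \textup{div}\,(v,\tau)(t),\phi\rangle_{L^2(\Omega)} + \langle \tau(t),\nabla_x\phi\rangle_{L^2(\Omega)}.
\end{align*}
The Cauchy--Schwarz inequality together with \eqref{eq:Frieidrichs} in the form $\lVert\phi\rVert_{L^2(\Omega)} \le \sqrt{C_\Omega}\,\lVert\nabla_x\phi\rVert_{L^2(\Omega)}$ bounds the right-hand side by $\big(\sqrt{C_\Omega}\,\lVert\textup{div}\,(v,\tau)(t)\rVert_{L^2(\Omega)} + \lVert\tau(t)\rVert_{L^2(\Omega)}\big)\,\lVert\nabla_x\phi\rVert_{L^2(\Omega)}$. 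Taking the supremum over $\phi$ with $\lVert\nabla_x\phi\rVert_{L^2(\Omega)}\le 1$ and then using $(a+b)^2\le 2a^2+2b^2$ gives, for a.e.\ $t\in\mathcal{I}$,
\begin{align*}
\lVert \partial_t v(t)\rVert_{H^{-1}(\Omega)}^2 \le 2\,C_\Omega\,\lVert\textup{div}\,(v,\tau)(t)\rVert_{L^2(\Omega)}^2 + 2\,\lVert\tau(t)\rVert_{L^2(\Omega)}^2.
\end{align*}

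Finally I would integrate this inequality over $\mathcal{I}$ and use $\lVert\tau\rVert_{L^2(Q)}^2 \le \lVert(v,\tau)\rVert_{L^2(Q)}^2$ together with $\lVert\textup{div}\,(v,\tau)\rVert_{L^2(Q)}^2 \le \lVert\bfv\rVert_U^2$ to arrive at $\lVert\partial_t v\rVert_{L^2(\mathcal{I};H^{-1}(\Omega))}^2 \le 2\max\{1,C_\Omega\}\,\lVert\bfv\rVert_U^2$. I do not anticipate a genuine obstacle here: the computation is elementary, and the only point requiring a little care is the justification that the distributional time derivative coincides with the Bochner function identified above and that the spatial integration by parts is valid for a.e.\ $t\in\mathcal{I}$ — exactly the sort of routine functional-analytic bookkeeping for which the authors are content to cite \cite{AntonicBurazinVrdoljak13} and \cite{GantnerStevenson20}.
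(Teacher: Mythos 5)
Your proposal is correct and follows essentially the same route as the paper: both write $\partial_t v = \textup{div}\,(v,\tau) - \textup{div}_x\,\tau$ and bound the two $L^2(\mathcal{I};H^{-1}(\Omega))$ contributions via duality, using the Friedrichs inequality \eqref{eq:Frieidrichs} for the $\textup{div}\,(v,\tau)$ term and the trivial bound $\lVert \textup{div}_x\,\tau\rVert_{H^{-1}(\Omega)} \le \lVert\tau\rVert_{L^2(\Omega)}$ for the other, arriving at the identical constant $2\max\lbrace 1,C_\Omega\rbrace$. Your pointwise-in-time estimate followed by $(a+b)^2\le 2a^2+2b^2$ and integration is just the unfolded version of the paper's triangle inequality at the Bochner-norm level.
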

\begin{proof}
Let $\bfv = (v,\tau)\in U_0$. Its divergence reads $\textup{div}\, \bfv = \textup{div}\, (v,\tau) = \partial_t v + \textup{div}_x\, \tau$. This identity and the triangle inequality yield
\begin{align*}
&\lVert \partial_t v \rVert^2_{L^2(\mathcal{I};H^{-1}(\Omega))} \leq 
2\, \lVert\textup{div}\,(v,\tau) \rVert_{L^2(\mathcal{I};H^{-1}(\Omega))}^2  + 2\, 
\lVert \textup{div}_x\, \tau \rVert_{L^2(\mathcal{I};H^{-1}(\Omega))}^2 \\
&\quad \leq  2C_\Omega\lVert \textup{div}\,(v,\tau)\rVert_{L^2(Q)}^2 + 2\, \lVert \tau\rVert_{L^2(Q)}^2 \leq 2\, \max\lbrace 1,C_\Omega\rbrace \, \lVert \bfv \rVert^2_U.\qedhere
\end{align*}
\end{proof}
As a consequence of Lemma \ref{lem:Embedding}, we have $v\in L^2(\mathcal{I};H^1_0(\Omega)) \cap H^1(\mathcal{I};H^{-1}(\Omega))$ for all $(v,\tau)\in U_0$. 
Hence, the space $U_0$ equals the ansatz space in \cite{FuehrerKarkulik19}
\begin{align*}
U_0 = \lbrace (v,\tau) \in \big(L^2(\mathcal{I};H_0^1(\Omega)) \cap H^1(\mathcal{I};H^{-1}(\Omega))\big) \times L^2(Q;\mathbb{R}^d)\mid \textup{div}\,(v,\tau)\in L^2(Q)\rbrace.
\end{align*}
Moreover, the continuous embedding
\begin{align*}
L^2(\mathcal{I};H^1_0(\Omega)) \cap H^1(\mathcal{I};H^{-1}(\Omega)) \hookrightarrow C(\overline{\mathcal{I}};L^2(\Omega))
\end{align*}
(see for example \cite[p.\ 473, Thm.\ 1]{DautrayLions92}) proves the existence of well-defined traces $\gamma_0 \bfv \coloneqq \gamma_0 v \coloneqq v(0,\bigcdot)$ and $\gamma_{T_\textup{end}} \bfv \coloneqq \gamma_{T_\textup{end}} v \coloneqq v(T_\textup{end},\bigcdot)$ in $L^2(\Omega)$ for all $\bfv = (v,\tau)\in U_0$. 
Define for all $(v,\tau) \in U$ the operators
\begin{align}\label{eq:defAandA*}
\begin{aligned}
A (v,\tau) & \coloneqq \begin{pmatrix}
\partial_t & \textup{div}_x\\
\nabla_x& \textup{id}
\end{pmatrix}
\begin{pmatrix}
v\\ \tau
\end{pmatrix}  = 
\begin{pmatrix}
\textup{div}\, (v,\tau)\\
\nabla_x v + \tau
\end{pmatrix},\\
A^* (v,\tau) & \coloneqq \begin{pmatrix}
-\partial_t & -\textup{div}_x\\
-\nabla_x& \textup{id}
\end{pmatrix}
\begin{pmatrix}
v\\ \tau
\end{pmatrix}  = 
\begin{pmatrix}
-\textup{div}\, (v,\tau)\\
-\nabla_x v + \tau
\end{pmatrix}.
\end{aligned}
\end{align}
Note that the space $U$ equals the space $H(A,Q)\coloneqq \lbrace \bfv \in L^2(Q)\times L^2(Q;\mathbb{R}^d) \mid A \bfv \in L^2(Q)\times L^2(Q;\mathbb{R}^d)\rbrace$ and the norm $\lVert \bigcdot \rVert_U$ is equivalent to the graph norm $\lVert \bigcdot \rVert_{H(A,Q)} \coloneqq (\lVert \bigcdot \rVert_{L^2(Q)}^2 + \lVert A\, \bigcdot \rVert_{L^2(Q)}^2)^{1/2}$. Moreover, we have the following.
\begin{theorem}[Equivalence of norms]\label{thm:EquiOfNorms}
For all $\bfv = (v,\tau) \in U_0$ we have
\begin{align}\label{eq:equiOfnorms}
\begin{aligned}
\lVert A \bfv\rVert_{L^2(Q)}^2 +  \lVert \gamma_0\bfv \rVert_{L^2(\Omega)}^2
\eqsim \lVert \bfv \rVert_U^2
 \eqsim \lVert A^*\bfv \rVert_{L^2(Q)}^2 + \lVert \gamma_{T_\textup{end}}\bfv \rVert_{L^2(\Omega)}^2.
\end{aligned}
\end{align}
In particular, there exists a constant $c_\bfL > 0$ with 
\begin{align}\label{eq:cl}
c_\bfL\lVert \bfv \rVert^2_{L^2(Q)} \leq \lVert A \bfv \rVert_{L^2(Q)}^2+ \lVert \gamma_0\bfv \rVert_{L^2(\Omega)}^2\qquad\text{for all }\bfv \in U_0.
\end{align}
\end{theorem}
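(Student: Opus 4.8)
The plan is to establish the two norm equivalences in \eqref{eq:equiOfnorms} and then read off \eqref{eq:cl} as an immediate consequence. I would start with the easy direction: for every $\bfv = (v,\tau)\in U_0$ one has $\lVert A\bfv\rVert_{L^2(Q)}^2 \lesssim \lVert \bfv\rVert_U^2$ directly from the definitions of $A$ and $\lVert \bigcdot\rVert_U$, and $\lVert \gamma_0\bfv\rVert_{L^2(\Omega)}^2 \lesssim \lVert\bfv\rVert_U^2$ from the continuous embedding $L^2(\mathcal{I};H^1_0(\Omega))\cap H^1(\mathcal{I};H^{-1}(\Omega))\hookrightarrow C(\overline{\mathcal{I}};L^2(\Omega))$ together with Lemma \ref{lem:Embedding}, which controls $\partial_t v$ in $L^2(\mathcal{I};H^{-1}(\Omega))$ by $\lVert\bfv\rVert_U$. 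The same two ingredients give the analogous upper bound for $\lVert A^*\bfv\rVert_{L^2(Q)}^2 + \lVert\gamma_{T_\textup{end}}\bfv\rVert_{L^2(\Omega)}^2$, since $A^*$ differs from $A$ only by signs and $\gamma_{T_\textup{end}}$ enjoys the same embedding.

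The substance is the reverse inequality $\lVert\bfv\rVert_U^2 \lesssim \lVert A\bfv\rVert_{L^2(Q)}^2 + \lVert\gamma_0\bfv\rVert_{L^2(\Omega)}^2$. Writing $\bfv=(v,\tau)$, note $\lVert A\bfv\rVert_{L^2(Q)}^2 = \lVert \textup{div}\,\bfv\rVert_{L^2(Q)}^2 + \lVert \nabla_x v + \tau\rVert_{L^2(Q)}^2$, so it already controls $\lVert\textup{div}\,\bfv\rVert_{L^2(Q)}$. The key is to bound $\lVert\nabla_x v\rVert_{L^2(Q)}$ (then $\lVert\tau\rVert_{L^2(Q)}\leq \lVert\nabla_x v+\tau\rVert_{L^2(Q)}+\lVert\nabla_x v\rVert_{L^2(Q)}$ and $\lVert v\rVert_{L^2(Q)}\lesssim\lVert\nabla_x v\rVert_{L^2(Q)}$ by Friedrichs close the estimate). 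To this end I would test the identity $\partial_t v + \textup{div}_x\tau = \textup{div}\,\bfv$ against $v$ itself: by the integration-by-parts formula valid on $L^2(\mathcal{I};H^1_0(\Omega))\cap H^1(\mathcal{I};H^{-1}(\Omega))$,
\begin{align*}
\int_0^{T_\textup{end}} \langle \partial_t v(t), v(t)\rangle_{H^{-1}(\Omega),H^1_0(\Omega)}\,\mathrm{d}t
= \tfrac12\lVert\gamma_{T_\textup{end}}v\rVert_{L^2(\Omega)}^2 - \tfrac12\lVert\gamma_0 v\rVert_{L^2(\Omega)}^2,
\end{align*}
while $\langle \textup{div}_x\tau, v\rangle_{L^2(Q)} = -\langle \tau,\nabla_x v\rangle_{L^2(Q)}$ because $\bfv\in U_0$. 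Rearranging gives
\begin{align*}
\tfrac12\lVert\gamma_{T_\textup{end}}v\rVert_{L^2(\Omega)}^2 + \langle \nabla_x v+\tau,\nabla_x v\rangle_{L^2(Q)}
= \langle \textup{div}\,\bfv, v\rangle_{L^2(Q)} + \tfrac12\lVert\gamma_0 v\rVert_{L^2(\Omega)}^2 + \lVert\nabla_x v\rVert_{L^2(Q)}^2 .
\end{align*}
Hmm — the $\lVert\nabla_x v\rVert^2$ terms cancel, so this naive test does not close. The fix is the standard parabolic trick: test against $v$ to get the identity above but keep the $\langle\nabla_x v+\tau,\nabla_x v\rangle$ term, then estimate $\lVert\nabla_x v\rVert_{L^2(Q)}^2 = \langle\nabla_x v+\tau,\nabla_x v\rangle - \langle\tau,\nabla_x v\rangle$ and absorb. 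Cleaner: since $A\bfv$ controls $\textup{div}\,\bfv$ and $\nabla_x v+\tau$, and the latter controls $\tau$ once $\nabla_x v$ is known, I would instead run the energy identity to bound $\lVert\nabla_x v\rVert_{L^2(Q)}^2 + \tfrac12\lVert\gamma_{T_\textup{end}}v\rVert^2$ by $\langle\textup{div}\,\bfv,v\rangle + \tfrac12\lVert\gamma_0v\rVert^2 + \langle\tau,\nabla_x v\rangle$, then use $\langle\tau,\nabla_x v\rangle = \langle\nabla_x v+\tau,\nabla_x v\rangle - \lVert\nabla_x v\rVert^2$ so that $2\lVert\nabla_x v\rVert^2 + \tfrac12\lVert\gamma_{T_\textup{end}}v\rVert^2 \le \langle\textup{div}\,\bfv,v\rangle + \tfrac12\lVert\gamma_0 v\rVert^2 + \langle\nabla_x v+\tau,\nabla_x v\rangle$, and finally apply Young's inequality to the two pairings on the right (using Friedrichs $\lVert v\rVert_{L^2(Q)}^2\le C_\Omega\lVert\nabla_x v\rVert_{L^2(Q)}^2$ to handle $\langle\textup{div}\,\bfv,v\rangle$) to absorb $\lVert\nabla_x v\rVert_{L^2(Q)}^2$ into the left. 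This yields $\lVert\nabla_x v\rVert_{L^2(Q)}^2 \lesssim \lVert\textup{div}\,\bfv\rVert_{L^2(Q)}^2 + \lVert\nabla_x v+\tau\rVert_{L^2(Q)}^2 + \lVert\gamma_0 v\rVert_{L^2(\Omega)}^2$, hence the claimed lower bound for $\lVert\bfv\rVert_U$.

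For the third member of \eqref{eq:equiOfnorms}, the inequality $\lVert\bfv\rVert_U^2\lesssim \lVert A^*\bfv\rVert_{L^2(Q)}^2 + \lVert\gamma_{T_\textup{end}}\bfv\rVert_{L^2(\Omega)}^2$ follows by the same argument run with time reversed $t\mapsto T_\textup{end}-t$, which swaps $A\leftrightarrow A^*$ (up to the harmless sign on the second component, whose $L^2$ norm is unaffected) and $\gamma_0\leftrightarrow\gamma_{T_\textup{end}}$; alternatively one observes $\lVert A^*\bfv\rVert_{L^2(Q)}^2 = \lVert\textup{div}\,\bfv\rVert_{L^2(Q)}^2 + \lVert{-\nabla_x v}+\tau\rVert_{L^2(Q)}^2$ and repeats the energy estimate. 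Finally, \eqref{eq:cl} is just the Friedrichs-strengthened form of the reverse inequality: $c_\bfL\lVert\bfv\rVert_{L^2(Q)}^2 \le c_\bfL(1+C_\Omega)\lVert\nabla_x v\rVert_{L^2(Q)}^2 + c_\bfL\lVert\tau\rVert_{L^2(Q)}^2 \lesssim \lVert A\bfv\rVert_{L^2(Q)}^2 + \lVert\gamma_0\bfv\rVert_{L^2(\Omega)}^2$. The main obstacle is the bookkeeping in the energy estimate — making sure the integration-by-parts in time is justified for $U_0$ functions (this is exactly where Lemma \ref{lem:Embedding} and the $C(\overline{\mathcal{I}};L^2(\Omega))$ embedding enter) and that the cross term $\langle\tau,\nabla_x v\rangle$ is rewritten via $U_0$-membership rather than assumed, so that the $\lVert\nabla_x v\rVert_{L^2(Q)}^2$ absorption is legitimate and the constants stay explicit.
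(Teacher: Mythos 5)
Your proposal is correct, but it takes a genuinely different route from the paper. The paper does not argue directly: it obtains the first equivalence in \eqref{eq:equiOfnorms} as a special case of \cite[Thm.\ 2.3]{GantnerStevenson20}, whose proof rests on the norm equivalence \eqref{eq:ProofEquiNormsTemp2} for the second-order operator $\partial_t-\Delta_x$ from \cite[Thm.\ 5.1]{SchwabStevenson09}, and it dispatches the $A^*$/$\gamma_{T_\textup{end}}$ half by ``similar arguments''. You instead prove the nontrivial lower bound by a self-contained energy estimate on the first-order system: pair $\textup{div}\,\bfv=\partial_t v+\textup{div}_x\,\tau$ with $v$, use the integration-by-parts formula in time (justified for $U_0$ through Lemma \ref{lem:BochnerSpaces} and Lemma \ref{lem:Embedding}) together with $\langle\textup{div}_x\,\tau,v\rangle=-\langle\tau,\nabla_x v\rangle$ (valid because $v(t)\in H^1_0(\Omega)$), rewrite $\tau=(\nabla_x v+\tau)-\nabla_x v$, and absorb via Young and Friedrichs; the time-reversed (sign-flipped) version gives the second equivalence, and \eqref{eq:cl} then follows from Friedrichs. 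This is a legitimate alternative: it is elementary and yields explicit constants in terms of $C_\Omega$ and $T_\textup{end}$, which is exactly the kind of information exploited in Remark \ref{rem:DepenceyTend} and in the constants $\beta$, $\lVert b\rVert$ of \eqref{eq:Defbeta}, whereas the paper's citation route is shorter and simultaneously provides the surjectivity of $(A\,\bigcdot,\gamma_0\,\bigcdot)$ used immediately afterwards, which an energy estimate alone does not give (and is not needed for this theorem). One bookkeeping slip: the exact identity is $\lVert\nabla_x v\rVert_{L^2(Q)}^2+\tfrac12\lVert\gamma_{T_\textup{end}}v\rVert_{L^2(\Omega)}^2=\langle\textup{div}\,\bfv,v\rangle_{L^2(Q)}+\tfrac12\lVert\gamma_0 v\rVert_{L^2(\Omega)}^2+\langle\nabla_x v+\tau,\nabla_x v\rangle_{L^2(Q)}$, i.e.\ the coefficient of $\lVert\nabla_x v\rVert_{L^2(Q)}^2$ on the left is $1$, not $2$; your intermediate inequality with $\langle\tau,\nabla_x v\rangle$ on the right already contains one spurious $\lVert\nabla_x v\rVert_{L^2(Q)}^2$. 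Since Young's inequality lets you absorb $\lVert\nabla_x v\rVert_{L^2(Q)}^2$ with either coefficient, the final bounds and hence the theorem are unaffected.
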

\begin{proof}
The equivalence $\lVert A \bfv\rVert_{L^2(Q)}^2 +  \lVert \gamma_0\bfv \rVert_{L^2(\Omega)}^2
\eqsim \lVert \bfv \rVert_U^2$ for all $\bfv \in U_0$ is a special case of \cite[Thm.\ 2.3]{GantnerStevenson20}. The proof exploits the equivalence of norms 
\begin{align}\label{eq:ProofEquiNormsTemp2}
\lVert \partial_t v - \Delta_x v\rVert_{L^2(\mathcal{I};H^{-1}(\Omega))} + \lVert \gamma_0 v \rVert_{L^2(\Omega)} \eqsim \lVert \nabla_x v \rVert_{L^2(Q)} + \lVert \partial_t v \rVert_{L^2(\mathcal{I};H^{-1}(\Omega))}
\end{align}
for all $v\in L^2(\mathcal{I};H^1_0(\Omega))\times H^1(\mathcal{I};H^{-1}(\Omega))$ \cite[Thm.\ 5.1]{SchwabStevenson09}.
Similar arguments lead to the equivalence $\lVert \bfv \rVert_U^2 \eqsim \lVert A^*\bfv \rVert_{L^2(Q)}^2 + \lVert \gamma_{T_\textup{end}}\bfv \rVert_{L^2(\Omega)}^2$.
\end{proof}
\begin{remark}[Dependence on $T_\textup{end}$]\label{rem:DepenceyTend}
Let $v\in L^2(\mathcal{I};H^1_0(\Omega)) \cap H^1(\mathcal{I};H^{-1}(\Omega))$, then we have for all $t\in \mathcal{I}$ the identity
\begin{align*}
\lVert v(0,\bigcdot) \rVert_{L^2(\Omega)}^2 & = \lVert v(t,\bigcdot) \rVert_{L^2(\Omega)}^2 + 2 \int_0^t \langle \partial_t v(s,\bigcdot),v(s,\bigcdot)\rangle_{H^{-1}(\Omega),H^1_0(\Omega)} \,\mathrm{d}s.
\end{align*}
Averaging the previous identity over the time interval $\mathcal{I}$ yields
\begin{align*}
\lVert v(0,\bigcdot) \rVert_{L^2(\Omega)}^2 & \leq  \left( 1 + \frac{C_\Omega}{T_\textup{end}}\right) \lVert \nabla_x v\rVert_{L^2(Q)}^2 + \lVert \partial_t v \rVert_{L^2(\mathcal{I};H^{-1}(\Omega))}^2.
\end{align*}
Using this estimate in the proof of \eqref{eq:ProofEquiNormsTemp2} (and so of Theorem \ref{thm:EquiOfNorms}) leads to a constant $C_1 \approx 1 + 1/T_\textup{end}$ (with equivalence constants depending on $\Omega$ but not $T_\textup{end}$) which satisfies for all $\bfv \in U_0$
\begin{align*}
\lVert A \bfv\rVert_{L^2(Q)}^2 + \lVert \gamma_0\bfv \rVert_{L^2(\Omega)}^2 &\leq C_1 \lVert \bfv \rVert_U^2,\\
% \quad \text{and}\quad
\lVert A^*\bfv \rVert_{L^2(Q)}^2 + \lVert \gamma_{T_\textup{end}}\bfv \rVert_{L^2(\Omega)}^2 &\leq C_1 \lVert \bfv \rVert_U^2.
\end{align*}

The hidden constant in the ``$\ \gtrsim$'' estimate in \eqref{eq:ProofEquiNormsTemp2} is uniformly bounded for all $T_\textup{end}>0$, which can be seen by a simple adjustment of the proof in \cite[Thm.\ 5.1]{SchwabStevenson09}.
This observation yields the existence of $T_\textup{end}$-independent constants $0<c_\bfL$ in \eqref{eq:cl} and $C_2<\infty$ with, for all $\bfv\in U_0$,
\begin{align*}
\lVert \bfv \rVert_U^2&\leq C_2(\lVert A \bfv\rVert_{L^2(Q)}^2 +  \lVert \gamma_0\bfv \rVert_{L^2(\Omega)}^2)\\
% \quad \text{and}\quad
\lVert \bfv \rVert_U^2
 &\leq  C_2(\lVert A^*\bfv \rVert_{L^2(Q)}^2 + \lVert \gamma_{T_\textup{end}}\bfv \rVert_{L^2(\Omega)}^2).
\end{align*} 
\end{remark}
The equivalence of norms shows injectivity and continuity of the linear mapping
\begin{align*}
(A\, \bigcdot, \gamma_0\, \bigcdot):U_0 \to \big(L^2(Q) \times L^2(Q;\mathbb{R}^d)\big) \times L^2(\Omega).
\end{align*}
Surjectivity of this mapping is proven in \cite[Thm.\ 2.3]{GantnerStevenson20}. 
Combining these properties shows well-posedness of the following parabolic problem. Given $\bff\in L^2(Q)\times L^2(Q;\mathbb{R}^d)$ and $u_0 \in L^2(\Omega)$, seek $\bfu \in U_0$ with
\begin{align}\label{eq:AbstrFOS}
\begin{aligned}
A\bfu = \bff\qquad\text{and}\qquad
\gamma_0 \bfu = u_0.
\end{aligned}
\end{align}
The problem in \eqref{eq:FOS} corresponds to the right-hand side $\bff = (f,0)^\top$ with $f \in L^2(Q)$. It is possible to include right-hand sides $f\in L^2(\mathcal{I};H^{-1}(\Omega))$ by defining suitable right-hand sides $\bff \in L^2(Q)\times L^2(Q;\mathbb{R}^d)$, see \cite[Prop.\ 2.5]{GantnerStevenson20} for details.
We conclude this section with proving additional properties of the space $U_0$.
\begin{lemma}[Alternative characterizations of $U_0$]\label{lem:AlternatChar}
A function $\bfv$ is in $U_0$ if and only if $\bfv \in L^2(Q)\times L^2(Q;\mathbb{R}^d)$ and there exists a function $\Theta \in L^2(Q) \times L^2(Q;\mathbb{R}^d)$ with 
\begin{align*}
\langle \Theta, \bfw \rangle_{L^2(Q)} = 
\langle \bfv,A^* \bfw\rangle_{L^2(Q)}\qquad\text{for all }\bfw\in C_c^\infty(Q) \times C^\infty(\oQ;\mathbb{R}^d). 
\end{align*}
Alternatively, a function $\bfv$ is in $U_0$ if and only if $\bfv \in L^2(Q)\times L^2(Q;\mathbb{R}^d)$ and there exists a function $\Xi \in L^2(Q) \times L^2(Q;\mathbb{R}^d)$ with 
\begin{align*}
\langle \Xi, \bfw \rangle_{L^2(Q)} = 
\langle \bfv,A \bfw\rangle_{L^2(Q)}\qquad\text{for all }\bfw\in C_c^\infty(Q) \times C^\infty(\oQ;\mathbb{R}^d). 
\end{align*}
The functions satisfy $\Theta = A\bfv$ and $\Xi = A^*\bfv$.
\end{lemma}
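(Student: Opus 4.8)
The plan is to decompose the test space $C_c^\infty(Q)\times C^\infty(\oQ;\mathbb{R}^d)$ into its two factors: testing against $(w,0)$ with $w\in C_c^\infty(Q)$ will encode that the weak divergence $\textup{div}(v,\tau)$ lies in $L^2(Q)$, while testing against $(0,\chi)$ with $\chi\in C^\infty(\oQ;\mathbb{R}^d)$ will simultaneously encode $\nabla_x v\in L^2(Q;\mathbb{R}^d)$ and the boundary-type relation \eqref{eq:defU0} that distinguishes $U_0$ inside $U$. Since both bilinear expressions are linear in $\bfw$, the assertion ``$\langle\Theta,\bfw\rangle_{L^2(Q)}=\langle\bfv,A^*\bfw\rangle_{L^2(Q)}$ for all $\bfw$'' is equivalent to the conjunction of these two restricted statements.

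I would start by writing $\bfv=(v,\tau)$ and $\bfw=(w,\chi)$ and, using \eqref{eq:defAandA*}, expanding $\langle\bfv,A^*\bfw\rangle_{L^2(Q)} = -\langle v,\partial_t w\rangle_{L^2(Q)}-\langle v,\textup{div}_x\chi\rangle_{L^2(Q)}-\langle\tau,\nabla_x w\rangle_{L^2(Q)}+\langle\tau,\chi\rangle_{L^2(Q)}$. Assume first that some $\Theta=(\Theta_1,\Theta_2)\in L^2(Q)\times L^2(Q;\mathbb{R}^d)$ satisfies the identity in the statement. Taking $\chi=0$ gives $\langle\Theta_1,w\rangle_{L^2(Q)}=-\langle v,\partial_t w\rangle_{L^2(Q)}-\langle\tau,\nabla_x w\rangle_{L^2(Q)}$ for all $w\in C_c^\infty(Q)$, i.e.\ the distributional divergence of $(v,\tau)$ is represented by $\Theta_1\in L^2(Q)$, so $\textup{div}(v,\tau)=\Theta_1$. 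Taking $w=0$ gives $\langle\Theta_2-\tau,\chi\rangle_{L^2(Q)}=-\langle v,\textup{div}_x\chi\rangle_{L^2(Q)}$ for all $\chi\in C^\infty(\oQ;\mathbb{R}^d)$; applying this first to $\chi\in C_c^\infty(Q;\mathbb{R}^d)$ identifies the distributional spatial gradient of $v$ with $\Theta_2-\tau\in L^2(Q;\mathbb{R}^d)$, hence $\nabla_x v=\Theta_2-\tau$, and with this in hand the identity for all $\chi\in C^\infty(\oQ;\mathbb{R}^d)$ becomes exactly \eqref{eq:defU0}. Thus $\bfv\in U$, $\bfv$ satisfies \eqref{eq:defU0}, so $\bfv\in U_0$, and $\Theta=(\textup{div}(v,\tau),\nabla_x v+\tau)=A\bfv$.

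For the converse I would take $\bfv\in U_0$, set $\Theta\coloneqq A\bfv\in L^2(Q)\times L^2(Q;\mathbb{R}^d)$ (this membership being part of the definition of $U$), and check the identity by reversing the above computation: the $w$-component is integration by parts in $Q$, legitimate because $w\in C_c^\infty(Q)$ and $\textup{div}(v,\tau)\in L^2(Q)$, and the $\chi$-component is precisely \eqref{eq:defU0} together with the trivial term $\langle\tau,\chi\rangle_{L^2(Q)}$; summing the two reproduces the expansion. The second characterization follows by the same argument with $A^*$ replaced by $A$: testing with $(w,0)$ forces $\Xi_1=-\textup{div}(v,\tau)$, testing with $(0,\chi)$ first over $C_c^\infty(Q;\mathbb{R}^d)$ forces $\nabla_x v\in L^2(Q;\mathbb{R}^d)$ and $\Xi_2=-\nabla_x v+\tau$, and then over all of $C^\infty(\oQ;\mathbb{R}^d)$ reproduces \eqref{eq:defU0}; conversely, for $\bfv\in U_0$ one takes $\Xi\coloneqq A^*\bfv$ and verifies the identity as before. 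Uniqueness of $\Theta$ (resp.\ $\Xi$) is automatic, since each component is pinned down by the chosen test functions.

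The only step that is slightly more than bookkeeping is the handling of the $\chi$-slot: the hypothesis yields a single distributional identity, and one must extract from it, in the right order, first the $L^2$-regularity $\nabla_x v\in L^2(Q;\mathbb{R}^d)$ (by using compactly supported $\chi$) and only then the full relation \eqref{eq:defU0} (by using the larger class $C^\infty(\oQ;\mathbb{R}^d)$). No density argument is required, because the weak divergence and the weak gradient are defined against exactly these test classes in the definitions of $U$ and $U_0$.
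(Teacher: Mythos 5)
Your argument is correct and is exactly the route the paper intends: its proof consists of the single remark that the lemma follows from the definitions of $A$, $A^*$ and the characterization \eqref{eq:defU0} of $U_0$ via the weak divergence and gradient, which is precisely what you spell out by testing with $(w,0)$, $w\in C_c^\infty(Q)$, and $(0,\chi)$, first for compactly supported and then for all $\chi\in C^\infty(\oQ;\mathbb{R}^d)$. Your careful ordering in the $\chi$-slot (regularity of $\nabla_x v$ first, then \eqref{eq:defU0}) is a correct and welcome elaboration of the step the paper leaves implicit.
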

\begin{proof}
The definition of $A$ and $A^*$ as well as the characterization of the space $U_0$ by the weak divergence $\textup{div}$ and gradient $\nabla_x$ in \eqref{eq:defU0} yield this lemma.
\end{proof}
The following result shows density of smooth functions in $U_0$. The set of smooth functions involves the space $C_D^\infty(\overline{Q}) \coloneqq \lbrace v \in C^\infty(\oQ) \mid v|_{\mathcal{I}\times \partial \Omega} = 0\rbrace$. A similar result is proven in \cite[Lem.\ 4]{AntonicBurazinVrdoljak13} with mollification techniques. Our proof relies on the definition of weak derivatives.
\begin{lemma}[Dense subspace]\label{lem:subspace}
Smooth functions are dense in $U_0$, that is, 
\begin{align*}
U_0  = \overline{C_D^\infty(\overline{Q}) \times C^\infty(\oQ;\mathbb{R}^d)}^{\lVert \bigcdot \rVert_U}.
\end{align*}
\end{lemma}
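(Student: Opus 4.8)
The plan is to prove density by a two-step argument: first approximate an arbitrary $\bfv = (v,\tau) \in U_0$ by functions that are smooth in the time variable and retain the spatial regularity, and then smooth in the space variables while preserving the homogeneous Dirichlet condition on $\mathcal{I}\times\partial\Omega$. For the $\tau$-component the task is trivial, since $C^\infty(\oQ;\mathbb{R}^d)$ is dense in $L^2(Q;\mathbb{R}^d)$ and the $U$-norm only involves $\tau$ through $L^2$ and through $\textup{div}\,\bfv$; so the real content concerns the scalar component $v \in L^2(\mathcal{I};H_0^1(\Omega))$ with $\textup{div}(v,\tau) = \partial_t v + \textup{div}_x\tau \in L^2(Q)$, equivalently $v \in L^2(\mathcal{I};H_0^1(\Omega))\cap H^1(\mathcal{I};H^{-1}(\Omega))$ by Lemma~\ref{lem:Embedding}.

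First I would handle the time direction. Using a standard translation/extension-and-mollification in $t$ only (extend $v$ suitably past $0$ and $T_\textup{end}$, say by reflection, then convolve with a mollifier $\rho_\varepsilon(t)$), one obtains $v_\varepsilon(t,x) \coloneqq (\rho_\varepsilon *_t \tilde v)(t,x)$ which lies in $C^\infty(\overline{\mathcal I};H_0^1(\Omega))$, still vanishes on $\mathcal{I}\times\partial\Omega$ for a.e.\ $t$, and converges to $v$ in $L^2(\mathcal I;H_0^1(\Omega))$ while $\partial_t v_\varepsilon \to \partial_t v$ in $L^2(\mathcal I;H^{-1}(\Omega))$. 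The mollification commutes with $\nabla_x$ and with $\textup{div}_x\tau$ (after also mollifying $\tau$ in time), so $\textup{div}(v_\varepsilon,\tau_\varepsilon) \to \textup{div}(v,\tau)$ in $L^2(Q)$. The key point to check here is that the reflection extension does not destroy the $H^1(\mathcal I;H^{-1}(\Omega))$ regularity near the endpoints; reflection is compatible because no boundary condition in $t$ is imposed in $U_0$.

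Next I would handle the space direction. At this stage $v$ is smooth in $t$ with values in $H_0^1(\Omega)$. For each fixed $t$ I would apply a spatial regularization that maps $H_0^1(\Omega)$ into $C^\infty(\overline\Omega)\cap$ (functions vanishing on $\partial\Omega$); on a bounded Lipschitz domain this can be done, e.g., via a finite partition of unity subordinate to a boundary cover, local bi-Lipschitz flattening, interior translation into the domain, and mollification — the classical construction that $C_D^\infty(\overline\Omega)$ is dense in $H_0^1(\Omega)$, with the operator depending measurably (indeed continuously) on the datum so it can be applied under the $t$-integral. Alternatively, and more cheaply, one can note $v(t,\bigcdot) \in H_0^1(\Omega)$ so it is an $H^1$-limit of $C_c^\infty(\Omega)$ functions; taking the regularizing operator to be linear and bounded on $H_0^1(\Omega)$ (e.g.\ a fixed mollification composed with a cutoff, or the heat semigroup $e^{s\Delta}$ with Dirichlet boundary conditions for small $s$) lets me push it through the time integral and through $\partial_t$. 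One checks that the resulting $v_{\varepsilon,s}$ converges to $v_\varepsilon$ in $L^2(\mathcal I;H_0^1(\Omega))$, that $\partial_t$ commutes with the spatial operator so $\partial_t v_{\varepsilon,s}\to \partial_t v_\varepsilon$ in $L^2(\mathcal I;H^{-1}(\Omega))$, and — treating $\tau$ in the same way by mollifying in space — that $\textup{div}(v_{\varepsilon,s},\tau_{\varepsilon,s}) \to \textup{div}(v_\varepsilon,\tau_\varepsilon)$ in $L^2(Q)$.

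Finally I would combine the two steps: given $\delta>0$, choose $s$ small, then $\varepsilon$ small (or vice versa, being careful about the order so that the $t$-mollification radius stays compatible with the fixed $s$), to get a function in $C_D^\infty(\overline Q)\times C^\infty(\oQ;\mathbb{R}^d)$ within $\delta$ of $\bfv$ in $\lVert\bigcdot\rVert_U$; this is legitimate because the $\varepsilon$-step bounds and the $s$-step bounds are each uniform in the other parameter (the operators are linear and uniformly bounded on the relevant spaces). The main obstacle is the spatial step on a general Lipschitz domain: one must produce a regularizing family that simultaneously smooths, preserves the homogeneous trace on $\partial\Omega$, is bounded on $H_0^1(\Omega)$ (to commute with $\partial_t$ and pass under the time integral), and is compatible with the $H^{-1}(\Omega)$-topology in which $\partial_t v$ lives — the cleanest route being the Dirichlet heat semigroup, whose smoothing, boundedness on both $H_0^1(\Omega)$ and $H^{-1}(\Omega)$, and commutation with $\partial_t$ are all standard.
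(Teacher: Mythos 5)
Your mollification strategy is genuinely different from the paper's proof (which argues by duality: any functional in $U_0^*$ vanishing on the smooth subspace is shown to vanish identically, using Theorem~\ref{thm:EquiOfNorms}, the Riesz representation and Lemma~\ref{lem:AlternatChar}); it is essentially the route of \cite{AntonicBurazinVrdoljak13}, which the paper explicitly mentions, and it can be made to work — but as written it has gaps exactly where the coupling between $v$ and $\tau$ matters. First, the opening claim that the $\tau$-component is ``trivial'' because $C^\infty(\oQ;\mathbb{R}^d)$ is dense in $L^2(Q;\mathbb{R}^d)$ is not correct: the $U$-norm controls $\partial_t v+\textup{div}_x\,\tau$, and an $L^2$-small perturbation of $\tau$ can change $\textup{div}_x\,\tau$ arbitrarily, so the pair must be approximated jointly (or the divergence must first be decoupled). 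Second, in the time step the extensions of $v$ and $\tau$ past $t=0$ and $t=T_\textup{end}$ must be chosen with matching parities (e.g.\ even reflection for $v$, odd for $\tau$), so that the extended pair still has space-time divergence in $L^2$; with two even reflections one only controls $-\partial_t v+\textup{div}_x\,\tau$ on the reflected part, which is merely $L^2(\mathcal{I};H^{-1}(\Omega))$, and the subsequent convergence of $\textup{div}(v_\varepsilon,\tau_\varepsilon)$ in $L^2(Q)$ is then unjustified.

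The more serious gap is the spatial step. You assert $\textup{div}(v_{\varepsilon,s},\tau_{\varepsilon,s})\to\textup{div}(v_\varepsilon,\tau_\varepsilon)$ in $L^2(Q)$ while applying different regularizers to $v$ and $\tau$, although a priori only the sum $\partial_t v+\textup{div}_x\,\tau$ lies in $L^2(Q)$. What rescues this — and is missing from the plan — is the observation that after time-mollification $\partial_t v_\varepsilon(t,\cdot)\in H^1_0(\Omega)$, hence $\partial_t v_\varepsilon\in L^2(Q)$ and consequently $\textup{div}_x\,\tau_\varepsilon\in L^2(Q)$ separately; only then may the two components be treated independently. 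Even then, the $\tau$-component requires approximating a time-dependent $H(\textup{div}_x)$ field by fields smooth up to $\partial\Omega$ in the graph norm of $\textup{div}_x$, which plain spatial mollification does not deliver near the boundary of a Lipschitz domain (one needs the local translation/dilation and partition-of-unity argument behind the density of $C^\infty(\overline{\Omega};\mathbb{R}^d)$ in $H(\textup{div},\Omega)$). Finally, your ``cleanest route'', the Dirichlet heat semigroup, does not produce admissible approximants: on a Lipschitz domain its range is smooth in the interior but not contained in $C^\infty(\overline{\Omega})$, so the resulting functions are not in $C_D^\infty(\overline{Q})$; you must instead use the chart-wise inward-translation-plus-mollification operator you sketch first (linear, bounded on $H^1_0(\Omega)$ and on $L^2(\Omega)$, commuting with $\partial_t$). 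With these repairs your plan goes through, but as stated these are genuine gaps.
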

\begin{proof}
This proof verifies the following alternative characterization of dense subspaces: A subspace $\mathcal{U}_0$ of $U_0$ is dense if and only if every element $\zeta$ in the dual $U_0^*$ of $U_0$ that vanishes on $\mathcal{U}_0$ also vanishes on $U_0$. 

Let $\zeta \in U_0^*$ with $\zeta (\bfw) = 0$ for all $\bfw\in C_D^\infty(\overline{Q}) \times C^\infty(\oQ;\mathbb{R}^d)$. Theorem \ref{thm:EquiOfNorms} and the Riesz representation theorem imply the existence of a function $\bfv \in U_0$ with $\zeta = \langle A \bfv,A \, \bigcdot\rangle_{L^2(Q)} + \langle \gamma_0 \bfv,\gamma_0 \bigcdot\rangle_{L^2(\Omega)}$. We have
\begin{align}\label{eq:ProofDenseTemp}
\langle A \bfv,A \bfw \rangle_{L^2(Q)} + \langle \gamma_0 \bfv,\gamma_0 \bfw\rangle_{L^2(\Omega)} = 0\quad\text{for all }\bfw \in C^\infty_D(\overline{Q})\times C^\infty(\oQ;\mathbb{R}^d).
\end{align}
Testing with functions in the subspace $C^\infty_c(Q)\times C^\infty(\oQ;\mathbb{R}^d)$ reveals
\begin{align*}
\langle A \bfv,A \bfw \rangle_{L^2(Q)} = 0\qquad\text{for all }\bfw \in C^\infty_c(Q)\times C^\infty(\oQ;\mathbb{R}^d).
\end{align*}
The second characterization in Lemma \ref{lem:AlternatChar} shows that $A^*A \bfv = 0$ and $\Theta \coloneqq A \bfv \in U_0$. 
Set the subspace $C^\infty_{D,0}(\overline{Q}) \coloneqq \lbrace w \in C^\infty_D(\overline{Q})\mid \gamma_0 w = 0\rbrace$.
We have
\begin{align*}
\langle \Theta , A \bfw \rangle_{L^2(Q)} = \langle \Theta , A \bfw \rangle_{L^2(Q)} - 
\langle A^*\Theta, \bfw \rangle_{L^2(Q)} = 0\quad\text{for all }\bfw\in C_{D,0}^\infty(\overline{Q}) \times \lbrace 0 \rbrace.
\end{align*}
This is equivalent to 
$
\langle \textup{div}\, \Theta ,w \rangle_{L^2(Q)} + \langle \Theta,\nabla w\rangle_{L^2(Q)} = 0$ for all $w \in  C_{D,0}^\infty(\overline{Q})$.
Thus, the normal trace of $\Theta \in U_0 \subset H(\textup{div},Q)$ on $\lbrace T_\textup{end}\rbrace \times \Omega$ must be equal to zero, that is, the trace $\gamma_{T_\textup{end}} \Theta = 0$. The combination of $\gamma_{T_\textup{end}} \Theta = 0$, $A^* \Theta = 0$, and Theorem \ref{thm:EquiOfNorms} implies $0 = \Theta =  A\bfv$. This identity leads in \eqref{eq:ProofDenseTemp} to
\begin{align*}
\langle \gamma_0 \bfv, w\rangle_{L^2(\Omega)} = 0\qquad\text{for all }w \in C^\infty_c(\Omega).
\end{align*}
Thus, the trace $\gamma_0 \bfv = 0$. Combining this identity with $A\bfv = 0$ and Theorem \ref{thm:EquiOfNorms} shows $\bfv = 0$ and concludes the proof.
\end{proof}
Define the spaces 
\begin{align}\label{eq:DefU00}
C^\infty_{D,T_\textup{end}}(\overline{Q}) \coloneqq \lbrace w \in C^\infty_D(\overline{Q})\mid \gamma_{T_\textup{end}} w = 0\rbrace\  \text{ and }\ 
U_{00} \coloneqq \lbrace \bfv \in U_0\mid \gamma_0\bfv = 0\rbrace.
\end{align}
\begin{lemma}[Alternative characterization of $U_{00}$]\label{lem:AlternatCharU00}
A function $\bfv$ is in $U_{00}$ if and only if $\bfv \in L^2(Q)\times L^2(Q;\mathbb{R}^d)$ and there exists a function $\Theta \in L^2(Q) \times L^2(Q;\mathbb{R}^d)$ with 
\begin{align}\label{eq:CharU00}
\langle\Theta, \bfw \rangle_{L^2(Q)} = 
\langle \bfv,A^* \bfw\rangle_{L^2(Q)}\qquad\text{for all }\bfw\in C_{D,T_\textup{end}}^\infty(\overline{Q}) \times C^\infty(\oQ;\mathbb{R}^d). 
\end{align}
The function $\Theta$ satisfies $\Theta = A\bfv$.
\end{lemma}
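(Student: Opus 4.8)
The plan is to bootstrap from Lemma~\ref{lem:AlternatChar} and to isolate the trace $\gamma_0\bfv$ via a single integration-by-parts identity. Concretely, I would first establish that for every $\bfv = (v,\tau)\in U_0$ and every $\bfw = (w,\chi) \in C_{D,T_\textup{end}}^\infty(\overline{Q}) \times C^\infty(\oQ;\mathbb{R}^d)$ one has
\[
\langle A\bfv,\bfw\rangle_{L^2(Q)} - \langle \bfv, A^*\bfw\rangle_{L^2(Q)} = -\langle \gamma_0\bfv,\gamma_0 w\rangle_{L^2(\Omega)}.
\]
For smooth $\bfv \in C_D^\infty(\overline{Q})\times C^\infty(\oQ;\mathbb{R}^d)$ this is a direct computation: inserting the definitions \eqref{eq:defAandA*} of $A$ and $A^*$, the terms $\langle\tau,\chi\rangle_{L^2(Q)}$ cancel, the remaining spatial terms combine to $\int_Q \textup{div}_x(w\tau + v\chi)$, i.e.\ a boundary integral over $\mathcal{I}\times\partial\Omega$ of the normal component of $w\tau + v\chi$, which vanishes since $v$ and $w$ vanish on $\mathcal{I}\times\partial\Omega$; the remaining time terms combine to $\int_Q\partial_t(vw) = \int_\Omega (vw)(T_\textup{end},\bigcdot) - \int_\Omega (vw)(0,\bigcdot)$, and the first summand drops out because $\gamma_{T_\textup{end}}w = 0$. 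For general $\bfv\in U_0$ the identity then follows by density of $C_D^\infty(\overline{Q})\times C^\infty(\oQ;\mathbb{R}^d)$ in $U_0$ (Lemma~\ref{lem:subspace}), since $\bfv\mapsto\langle A\bfv,\bfw\rangle_{L^2(Q)}$, $\bfv\mapsto\langle\bfv,A^*\bfw\rangle_{L^2(Q)}$ and $\bfv\mapsto\gamma_0\bfv$ are all bounded on $(U_0,\lVert\bigcdot\rVert_U)$, the last one by Theorem~\ref{thm:EquiOfNorms}.

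Granting this identity, the direction ``$\bfv\in U_{00}\Rightarrow$ \eqref{eq:CharU00}'' is immediate: if $\bfv\in U_0$ with $\gamma_0\bfv = 0$, then $\Theta \coloneqq A\bfv \in L^2(Q)\times L^2(Q;\mathbb{R}^d)$ satisfies \eqref{eq:CharU00}. For the converse, assume $\bfv\in L^2(Q)\times L^2(Q;\mathbb{R}^d)$ and let $\Theta$ satisfy \eqref{eq:CharU00}. Since every $\bfw\in C_c^\infty(Q)\times C^\infty(\oQ;\mathbb{R}^d)$ also lies in $C_{D,T_\textup{end}}^\infty(\overline{Q})\times C^\infty(\oQ;\mathbb{R}^d)$ (a function compactly supported in the open set $Q$ vanishes near $\{0\}\times\Omega$, near $\{T_\textup{end}\}\times\Omega$, and near $\mathcal{I}\times\partial\Omega$), restricting \eqref{eq:CharU00} to such $\bfw$ is precisely the hypothesis of the first characterization in Lemma~\ref{lem:AlternatChar}; hence $\bfv\in U_0$ and $\Theta = A\bfv$. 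Plugging this into \eqref{eq:CharU00} and using the identity from the first paragraph yields $\langle\gamma_0\bfv,\gamma_0 w\rangle_{L^2(\Omega)} = 0$ for all admissible $\bfw = (w,\chi)$. Testing with product functions $w(t,x) = \phi(t)\psi(x)$, where $\phi\in C^\infty(\overline{\mathcal{I}})$ with $\phi(0) = 1$ and $\phi(T_\textup{end}) = 0$, and $\psi$ ranges over $C_c^\infty(\Omega)$, gives $\langle\gamma_0\bfv,\psi\rangle_{L^2(\Omega)} = 0$ for all such $\psi$; density of $C_c^\infty(\Omega)$ in $L^2(\Omega)$ forces $\gamma_0\bfv = 0$, so $\bfv\in U_{00}$.

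I expect the only point requiring care to be the density passage in the first paragraph: one must argue that $\bfv\mapsto\langle A\bfv,\bfw\rangle_{L^2(Q)} - \langle\bfv,A^*\bfw\rangle_{L^2(Q)} + \langle\gamma_0\bfv,\gamma_0 w\rangle_{L^2(\Omega)}$ is bounded on $(U_0,\lVert\bigcdot\rVert_U)$ for each fixed smooth $\bfw$ — boundedness of the first two terms is clear since $A^*\bfw\in L^2(Q)\times L^2(Q;\mathbb{R}^d)$ and $\bfv\mapsto A\bfv$ is bounded from $U$ into $L^2(Q)\times L^2(Q;\mathbb{R}^d)$, while boundedness of the trace term is exactly continuity of $\gamma_0\colon U_0\to L^2(\Omega)$ from Theorem~\ref{thm:EquiOfNorms}. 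Once this is in place, vanishing of this map on the dense subspace $C_D^\infty(\overline{Q})\times C^\infty(\oQ;\mathbb{R}^d)$ propagates to all of $U_0$, and the rest is bookkeeping with the definitions of $A$, $A^*$ and the trace spaces $C_D^\infty$, $C_{D,T_\textup{end}}^\infty$.
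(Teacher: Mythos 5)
Your proposal is correct and follows essentially the same route as the paper: use Lemma~\ref{lem:AlternatChar} (via $C_c^\infty(Q)\subset C_{D,T_\textup{end}}^\infty(\overline{Q})$) to get $\bfv\in U_0$ with $\Theta=A\bfv$, then an integration by parts for smooth functions combined with the density of Lemma~\ref{lem:subspace} and continuity of $\gamma_0$ (Theorem~\ref{thm:EquiOfNorms}) to force $\gamma_0\bfv=0$. The only differences are organizational — you extend the integration-by-parts identity to all of $U_0$ before drawing conclusions and you spell out the (easy) forward direction, which the paper leaves implicit — so no substantive deviation or gap.
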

\begin{proof}
Let $\bfv = (v,\tau) \in L^2(Q)\times L^2(Q;\mathbb{R}^d)$ with \eqref{eq:CharU00}.
Since $C_c^\infty(Q)\subset C_{D,T_\textup{end}}^\infty(\overline{Q})$, Lemma \ref{lem:AlternatChar} proves that $\bfv \in U_{0}$. Moreover, \eqref{eq:CharU00} implies
\begin{align}\label{eq:ProofCarUUU}
\langle \textup{div}\,\bfv,w\rangle_{L^2(Q)} = - \langle \bfv,\nabla w\rangle_{L^2(Q)}\qquad\text{for all }w \in C_{D,T_\textup{end}}^\infty(\overline{Q}). 
\end{align}
An integration by parts shows for sufficiently smooth functions $\bfv$ the equivalence of \eqref{eq:ProofCarUUU} and (with outer unit normal vector $\nu$)
\begin{align*}
0 = \int_{\partial Q} \bfv\cdot \nu \, w\,\mathrm{d}s = -\int_{\Omega}  \gamma_0 v \,\gamma_0 w\,\mathrm{d}s\qquad\text{for all }w\in C_{D,T_\textup{end}}^\infty(\overline{Q}).
\end{align*}
In particular, the integral $\int_\Omega \gamma_0 v\, w\, \mathrm{d}x = 0$ for all $w\in C_c^\infty(\Omega)$. This proves $\gamma_0 v = 0$ for all sufficiently smooth functions $v$. Density arguments (which apply due to Lemma \ref{lem:subspace}) conclude the proof.
\end{proof}
\section{Design of the DPG Method}\label{sec:DesignDPG}
This section reformulates the problem in \eqref{eq:AbstrFOS} as a variational problem with test functions that are discontinuous across the interfaces of some given partition $\mathcal{T}$ of the time-space cylinder $Q$. The design follows  \cite{CarstensenDemkowiczGopalakrishnan16} with abstractly defined traces as in \cite{ErnGuermond06,DemkowiczGopalakrishnanNagarajSepulveda17,Storn20}. 
Recall the definition of the (formally adjoint) operator
\begin{align*}
A^* \coloneqq \begin{pmatrix}
-\partial_t & -\textup{div}_x\\
-\nabla_x& \textup{id}
\end{pmatrix}.
\end{align*}
For all $K \in \mathcal{T}$ define the domain of the operator $A^*$ as
\begin{align*}
&H(A^*,K)  \coloneqq \lbrace \bfw \in L^2(K)\times L^2(K;\mathbb{R}^d) \mid A^* \bfw \in L^2(K)\rbrace\\
&\quad\hphantom{:} = \lbrace (w,\chi) \in  L^2(K)\times L^2(K;\mathbb{R}^d) \mid \textup{div}\, (w,\chi) \in L^2(K)\text{ and } \nabla_x w \in L^2(K;\mathbb{R}^d)\rbrace.
\end{align*}
Define the space of broken functions 
\begin{align*}
H(A^*,\mathcal{T}) & \coloneqq \lbrace \bfw \in L^2(Q) \times L^2(Q;\mathbb{R}^d) \mid \bfw|_K \in H(A^*,K)\text{ for all }K\in \mathcal{T}\rbrace,\\
Y & \coloneqq H(A^*,\mathcal{T}) \times L^2(\Omega).
\end{align*}
Let $A^*_h$ denote the element-wise application of $A^*$, that is,  
\begin{align*}
(A^*_h \bfw)|_K \coloneqq A^* (\bfw|_K)\qquad\text{for all }\bfw \in H(A^*,\mathcal{T})\text{ and }K\in \mathcal{T}.
\end{align*}
For all functions $(\bfw,\xi),(\widetilde{\bfw},\tilde{\xi})$ in the Hilbert space $Y$ the inner product reads
\begin{align}\label{eq:innerProdY}
\langle \bfw,\xi;\widetilde{\bfw},\tilde{\xi}\rangle_Y \coloneqq \langle \bfw,\widetilde{\bfw}\rangle_{L^2(Q)} + \langle A_h^*\bfw,A_h^*\widetilde{\bfw}\rangle_{L^2(Q)} + \langle \xi,\tilde{\xi} \rangle_{L^2(\Omega)}.
\end{align}
The induced norm reads 
\begin{align*}
\lVert (\bfw,\xi)\rVert_Y^2 \coloneqq \lVert \bfw \rVert_{L^2(Q)}^2 + \lVert A_h^*\bfw\rVert_{L^2(Q)}^2 + \lVert \xi \rVert_{L^2(\Omega)}^2.
\end{align*}
A multiplication of \eqref{eq:AbstrFOS} by a broken test function and an integration over $Q$ and $\Omega$ lead to the variational problem: Seek $\bfu \in U_0$ such that for all $(\bfw,\xi) \in Y$
\begin{align}\label{eq:temp1}
\langle A\bfu, \bfw\rangle_{L^2(Q)} + \langle \gamma_0 \bfu ,\xi \rangle_{L^2(\Omega)} = \langle \bff , \bfw\rangle_{L^2(Q)} + \langle u_0,\xi\rangle_{L^2(\Omega)}.
\end{align}
Define for all $\bfv = (v,\tau) \in U_0$ and $(\bfw,\xi) \in Y$ with $\bfw = (w,\chi)$ the pairing 
\begin{align}\label{eq:intByParts}
\begin{aligned}
\langle \gamma_A \bfv,( \bfw,\xi) \rangle_{\partial \mathcal{T}} & \coloneqq \langle A\bfv, \bfw\rangle_{L^2(Q)} - \langle \bfv, A^*_h \bfw \rangle_{L^2(Q)} + \langle \gamma_0 \bfv ,\xi\rangle_{L^2(\Omega)}\\
& \hphantom{:} = \sum_{K\in \mathcal{T}} \big( \langle \textup{div}\,\bfv , w\rangle_{L^2(K)} + \langle \nabla_x v ,\chi\rangle_{L^2(K)} + \langle v ,\textup{div}\,\bfw\rangle_{L^2(K)}\\
&\ \quad\qquad  + \langle \tau,\nabla_x w \rangle_{L^2(K)} \big) + \langle \gamma_0 v,\xi\rangle_{L^2(\Omega)}.
\end{aligned}
\end{align}
This pairing defines the bounded (trace) operator $\gamma_A:U_0 \to Y^*$ with 
\begin{align*}
\gamma_A \bfv \coloneqq \langle \gamma_A \bfv,\bigcdot\rangle_{\partial \mathcal{T}}.
\end{align*}
Set the functional $F(\bfw,\xi) \coloneqq \langle \bff, \bfw \rangle_{L^2(Q)}  + \langle u_0,\xi\rangle_{L^2(\Omega)}$ for all $(\bfw,\xi) \in Y$.
By design the variational problem in \eqref{eq:temp1} is equivalent to
\begin{align*}
\langle \bfu, A^*_h \bfw\rangle_{L^2(Q)} + \langle \gamma_A \bfu ,(\bfw,\xi)\rangle_{\partial \mathcal{T}} = F(\bfw,\xi)  \qquad\text{for all }(\bfw,\xi) \in Y.
\end{align*}
We define the (trace) space $\trSpace \coloneqq \gamma_A U_0 \subset Y^*$ as a subspace of the dual space $Y^*$ and set the product space 
%(with norm defined in \eqref{eq:NormX}) 
\begin{align}\label{eq:DefSpaceX}
X \coloneqq \big( L^2(Q)\times L^2(Q;\mathbb{R}^d)\big) \times \trSpace.
\end{align}
The space $X$ is a Hilbert space with the canonical product norm $(\lVert \bfv \rVert_{L^2(Q)}^2 + \lVert \bft \rVert_{Y^*}^2)^{1/2}$ for all $(\bfv,\bft)\in X$.
Set for all $(\bfv,\bft)\in X$ and $(\bfw,\xi) \in Y$ the bilinear form $b:X\times Y \to \mathbb{R}$ as
\begin{align}
b(\bfv,\bft;\bfw,\xi) \coloneqq \langle \bfv, A^*_h \bfw\rangle_{L^2(Q)} + \langle \bft ,(\bfw,\xi)\rangle_{\partial \mathcal{T}}.
\end{align}
We introduce the trace $\bfs \coloneqq \gamma_A \bfu$ as new unknown.
Then the ultra-weak broken variational formulation of \eqref{eq:AbstrFOS} reads as follows.
\begin{definition}[Variational formulation]
Given $F\in Y^*$, we seek $(\bfu,\bfs)\in X$ with 
\begin{align}\label{eq:VarProb}
b(\bfu,\bfs;\bfw,\xi) =F(\bfw,\xi) \qquad\text{for all }(\bfw,\xi)\in Y.
\end{align}
\end{definition}
We conclude this section with the proof of the equivalence of the variational problem \eqref{eq:VarProb} and the PDE in \eqref{eq:AbstrFOS}. The proof bases upon the following two lemmas. 
\begin{lemma}[Unbroken functions]\label{lem:TraceUnbroken}
We have
\begin{align*}
\langle \bft,(\bfw,0)\rangle_{\partial\mathcal{T}} = 0\qquad\text{for all }\bft \in \trSpace\text{ and }\bfw \in C_c^\infty(Q) \times C^\infty(\oQ;\mathbb{R}^d).
\end{align*}
\end{lemma}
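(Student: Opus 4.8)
The plan is to trace the definition of the trace operator $\gamma_A$ back through the integration-by-parts formula~\eqref{eq:intByParts} and observe that, when $\xi = 0$ and $\bfw$ is globally smooth (unbroken), the element-wise integration by parts collapses to a global one, whose boundary contribution on $\partial Q$ vanishes because of the structure of $U_0$. Concretely, every $\bft \in \trSpace$ is of the form $\bft = \gamma_A \bfv$ for some $\bfv = (v,\tau) \in U_0$, so
\begin{align*}
\langle \bft,(\bfw,0)\rangle_{\partial\mathcal{T}} = \langle A\bfv,\bfw\rangle_{L^2(Q)} - \langle \bfv, A^*_h\bfw\rangle_{L^2(Q)}.
\end{align*}
Since $\bfw \in C_c^\infty(Q)\times C^\infty(\oQ;\mathbb{R}^d)$ is not broken, $A^*_h\bfw = A^*\bfw$ (the element-wise operator agrees with the global one), so the right-hand side is exactly $\langle A\bfv,\bfw\rangle_{L^2(Q)} - \langle \bfv,A^*\bfw\rangle_{L^2(Q)}$, which is the $Y$-independent quantity one must show is zero.

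First I would write $\bfw = (w,\chi)$ with $w \in C_c^\infty(Q)$ and $\chi \in C^\infty(\oQ;\mathbb{R}^d)$, and expand both inner products using the definitions of $A$ and $A^*$ in~\eqref{eq:defAandA*}:
\begin{align*}
\langle A\bfv,\bfw\rangle_{L^2(Q)} &= \langle \textup{div}\,\bfv, w\rangle_{L^2(Q)} + \langle \nabla_x v + \tau,\chi\rangle_{L^2(Q)},\\
\langle \bfv,A^*\bfw\rangle_{L^2(Q)} &= \langle v, -\textup{div}\,\bfw\rangle_{L^2(Q)} + \langle \tau, -\nabla_x w + \chi\rangle_{L^2(Q)}.
\end{align*}
Subtracting, the $\langle \tau,\chi\rangle$ terms cancel and the difference becomes
\begin{align*}
\langle \textup{div}\,\bfv, w\rangle_{L^2(Q)} + \langle v,\textup{div}\,\bfw\rangle_{L^2(Q)} + \langle \nabla_x v,\chi\rangle_{L^2(Q)} + \langle \tau,\nabla_x w\rangle_{L^2(Q)}.
\end{align*}
The second step is to kill each pair. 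For the $\chi$-pair: since $v \in L^2(\mathcal{I};H^1_0(\Omega))$ (Lemma~\ref{lem:BochnerSpaces}), the definition of the weak spatial gradient on $U_0$ in~\eqref{eq:defU0} gives $\langle \nabla_x v,\chi\rangle_{L^2(Q)} = -\langle v,\textup{div}_x\,\chi\rangle_{L^2(Q)}$ for $\chi \in C^\infty(\oQ;\mathbb{R}^d)$. For the $w$-pair, since $w \in C_c^\infty(Q)$ has compact support in $Q$, the weak space-time divergence of $\bfv \in U \subset H(\textup{div},Q)$ satisfies $\langle \textup{div}\,\bfv, w\rangle_{L^2(Q)} = -\langle \bfv,\nabla w\rangle_{L^2(Q)} = -\langle v,\partial_t w\rangle_{L^2(Q)} - \langle \tau,\nabla_x w\rangle_{L^2(Q)}$. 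Substituting both identities, and using $\textup{div}\,\bfw = \partial_t w + \textup{div}_x\,\chi$, all four terms cancel pairwise, giving $0$.

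The only subtlety — and the step I would be most careful about — is matching the two notions of weak derivative: on one hand the compactly supported $w \in C_c^\infty(Q)$ pairs against $\textup{div}\,\bfv$ via the definition of $H(\textup{div},Q)$ with test functions vanishing on all of $\partial Q$; on the other hand the spatial test field $\chi \in C^\infty(\oQ;\mathbb{R}^d)$ need not vanish on $\mathcal{I}\times\partial\Omega$, and it is precisely~\eqref{eq:defU0} that encodes the homogeneous lateral boundary condition baked into $U_0$ and makes the $\chi$-pair vanish without a boundary term. Since $w$ is compactly supported in $Q$, there is genuinely no boundary contribution from the $w$-pair, so no trace of $w$ on $\{0\}\times\Omega$ or $\{T_\textup{end}\}\times\Omega$ enters; this is what forces the hypothesis $w \in C_c^\infty(Q)$ rather than merely $w \in C^\infty(\oQ)$. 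Once these matchings are set up correctly the cancellation is immediate, so I do not anticipate any deeper obstacle.
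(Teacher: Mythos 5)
Your argument is correct and follows essentially the same route as the paper: represent $\bft = \gamma_A \bfv$ with $\bfv \in U_0$, note $A^*_h\bfw = A^*\bfw$ for unbroken $\bfw$, and reduce to the identity $\langle A\bfv,\bfw\rangle_{L^2(Q)} = \langle \bfv,A^*\bfw\rangle_{L^2(Q)}$ for $\bfw \in C_c^\infty(Q)\times C^\infty(\oQ;\mathbb{R}^d)$. The only difference is that you verify this identity by expanding the weak-derivative definitions (in particular \eqref{eq:defU0} and the weak space-time divergence) by hand, whereas the paper simply cites Lemma~\ref{lem:AlternatChar}, whose content is exactly that computation.
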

\begin{proof}
Let $\bft \in \trSpace$ and $\bfw \in  C_c^\infty(Q) \times C^\infty(\oQ;\mathbb{R}^d)$. The definition $\trSpace \coloneqq \gamma_A U_0$ yields the existence of a function $\bfq \in U_0$ with $\gamma_A \bfq = \bft$. Lemma \ref{lem:AlternatChar} implies
\begin{align*}
\langle \bft,(\bfw,0) \rangle_{\trSpace} & = \langle A \bfq,\bfw\rangle_{L^2(Q)} - \langle \bfq, A^*_h \bfw\rangle_{L^2(Q)}\\
& = \langle A \bfq,\bfw\rangle_{L^2(Q)} - \langle \bfq, A^* \bfw\rangle_{L^2(Q)} = 0.\qedhere
\end{align*}
\end{proof}
\begin{lemma}[Identical traces]\label{lem:identTraces}
Two traces $\bft, \bfr \in \Gamma(\partial \mathcal{T})$ are identical, if
\begin{align}\label{eq:idenTTrace}
\langle \bft,(\bfw,0)\rangle_{\partial \mathcal{T}} = \langle \bfr,(\bfw,0)\rangle_{\partial\mathcal{T}}\qquad\text{for all }\bfw \in H(A^*,\mathcal{T}).
\end{align}
\end{lemma}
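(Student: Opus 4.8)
The plan is to pass from traces to generating functions in $U_0$. Since $\trSpace = \gamma_A U_0$ by definition, there are $\bfq,\bfp \in U_0$ with $\gamma_A\bfq = \bft$ and $\gamma_A\bfp = \bfr$; set $\bfv \coloneqq \bfq - \bfp \in U_0$, so that by linearity of $\gamma_A$ the hypothesis \eqref{eq:idenTTrace} reads $\langle \gamma_A \bfv, (\bfw,0)\rangle_{\partial\mathcal{T}} = 0$ for all $\bfw \in H(A^*,\mathcal{T})$. As $Y = H(A^*,\mathcal{T}) \times L^2(\Omega)$ and every $(\bfw,\xi) \in Y$ splits as $(\bfw,0) + (0,\xi)$, proving $\bft = \bfr$ in $Y^*$ amounts to showing $\langle \gamma_A\bfv,(0,\xi)\rangle_{\partial\mathcal{T}} = 0$ for all $\xi \in L^2(\Omega)$. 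By the very definition \eqref{eq:intByParts} of the pairing, $\langle \gamma_A\bfv,(0,\xi)\rangle_{\partial\mathcal{T}} = \langle \gamma_0\bfv,\xi\rangle_{L^2(\Omega)}$, so the whole statement reduces to the single assertion $\gamma_0\bfv = 0$.

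To obtain $\gamma_0\bfv = 0$ I would test the hypothesis with globally smooth (hence unbroken) functions. Take $\bfw = (w,\chi)$ with $w \in C^\infty_{D,T_\textup{end}}(\overline{Q})$ and $\chi \in C^\infty(\oQ;\mathbb{R}^d)$; such a $\bfw$ is smooth on $\oQ$, hence lies in $H(A^*,\mathcal{T})$, and for it the element-wise operator satisfies $A^*_h\bfw = A^*\bfw$ almost everywhere in $Q$. Therefore \eqref{eq:intByParts} together with $\langle\gamma_A\bfv,(\bfw,0)\rangle_{\partial\mathcal{T}} = 0$ gives
\[
\langle \bfv, A^*\bfw\rangle_{L^2(Q)} = \langle A\bfv,\bfw\rangle_{L^2(Q)}
\qquad\text{for all } \bfw \in C^\infty_{D,T_\textup{end}}(\overline{Q})\times C^\infty(\oQ;\mathbb{R}^d),
\]
which is exactly condition \eqref{eq:CharU00} with $\Theta \coloneqq A\bfv$. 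Hence Lemma \ref{lem:AlternatCharU00} yields $\bfv \in U_{00}$, i.e.\ $\gamma_0\bfv = 0$, and the reduction from the first paragraph concludes the proof.

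The only point requiring care is the passage from the broken duality pairing to the initial-time contribution: one must restrict the test functions to globally smooth ones so that $A^*_h$ collapses to $A^*$, and then the already-established characterization of $U_{00}$ (Lemma \ref{lem:AlternatCharU00}, which itself rests on the density Lemma \ref{lem:subspace}) becomes directly applicable. If one preferred to avoid that appeal, the alternative is an explicit integration by parts on $Q$ for smooth $\bfv$ followed by a density argument: with the choice $w(t,x) = (1 - t/T_\textup{end})\,\psi(x)$, $\psi \in C^\infty_c(\Omega)$, all lateral-boundary and final-time boundary terms drop out and one is left with $\langle \gamma_0\bfv,\psi\rangle_{L^2(\Omega)} = 0$, hence $\gamma_0\bfv = 0$; I expect the first route to be the shorter one.
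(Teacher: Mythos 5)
Your proof is correct and follows essentially the same route as the paper: reduce by linearity to the difference $\bfv\in U_0$, test with unbroken functions in $C^\infty_{D,T_\textup{end}}(\overline{Q})\times C^\infty(\oQ;\mathbb{R}^d)\subset H(A^*,\mathcal{T})$, invoke Lemma \ref{lem:AlternatCharU00} to conclude $\gamma_0\bfv=0$, and finish via the definition \eqref{eq:intByParts} of the pairing. The only (harmless) difference is that you phrase the reduction explicitly through the splitting $(\bfw,\xi)=(\bfw,0)+(0,\xi)$, whereas the paper argues directly that $\langle\bft,(\bfw,\xi)\rangle_{\partial\mathcal{T}}=\langle\gamma_0\bfv,\xi\rangle_{L^2(\Omega)}=0$.
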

\begin{proof}
Recall the definition of the spaces $U_{00}$ and $C_{D,T_\textup{end}}^\infty(\overline{Q})$ in \eqref{eq:DefU00}.
Let $\bfv \in U_0$ with $\gamma_A \bfv = \bft$ and
\begin{align*}
\langle \bft,(\bfw,0)\rangle_{\partial \mathcal{T}} = 0\qquad\text{for all }\bfw \in H(A^*,\mathcal{T}).
\end{align*}
This yields for all $\bfw \in C_{D,T_\textup{end}}^\infty(\overline{Q}) \times C^\infty(\oQ;\mathbb{R}^d) \subset H(A^*,\mathcal{T})$ that
\begin{align*}
0 = \langle \bft,(\bfw,0)\rangle_{\partial \mathcal{T}} = \langle \gamma_A \bfv,(\bfw,0)\rangle_{\partial\mathcal{T}} = \langle A \bfv ,\bfw\rangle_{L^2(Q)} - \langle \bfv,A^*\bfw\rangle_{L^2(Q)}. 
\end{align*} 
Thus Lemma \ref{lem:AlternatCharU00} shows $\bfv \in U_{00}$, that is, $\gamma_0 \bfv = 0$. This proves
\begin{align*}
\langle \bft,(\bfw,\xi)\rangle_{\partial \mathcal{T}}& = \langle \bft,(0,\xi)\rangle_{\partial \mathcal{T}} = \langle \gamma_A \bfv,(0,\xi)\rangle_{\partial \mathcal{T}}\\
& = \langle \gamma_0 \bfv,\xi\rangle_{L^2(\Omega)} = 0 \qquad\qquad \text{for all }(\bfw,\xi) \in Y.
\end{align*}
Since $\bft \in Y^*$, this shows the identity $\bft = 0$. 
\end{proof}
With the previous observations we can prove the following equivalence.
\begin{theorem}[Equivalent problems]\label{thm:equiProbs}
If $\bfu\in U_0$ solves \eqref{eq:AbstrFOS}, then the pair $(\bfu,\gamma_A \bfu) \in X$ solves \eqref{eq:VarProb}. Conversely, if the pair $(\bfu,\bfs) \in X$ solves \eqref{eq:VarProb}, then $\bfs = \gamma_A\bfu$ and $\bfu \in U_0$ solves \eqref{eq:AbstrFOS}.
\end{theorem}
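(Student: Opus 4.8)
The plan is to prove the two implications separately, using the two trace lemmas just established together with the equivalence-of-norms result (Theorem~\ref{thm:EquiOfNorms}) and the alternative characterizations of $U_0$.

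For the first implication, suppose $\bfu \in U_0$ solves \eqref{eq:AbstrFOS}, i.e.\ $A\bfu = \bff$ and $\gamma_0\bfu = u_0$. Set $\bfs \coloneqq \gamma_A\bfu \in \trSpace$, so that $(\bfu,\bfs)\in X$ by definition of $X$ and $\trSpace$. For any $(\bfw,\xi)\in Y$, I would simply unfold the definitions: by \eqref{eq:intByParts},
\begin{align*}
b(\bfu,\bfs;\bfw,\xi) &= \langle \bfu, A^*_h\bfw\rangle_{L^2(Q)} + \langle \gamma_A\bfu,(\bfw,\xi)\rangle_{\partial\mathcal{T}}\\
&= \langle \bfu, A^*_h\bfw\rangle_{L^2(Q)} + \langle A\bfu,\bfw\rangle_{L^2(Q)} - \langle \bfu, A^*_h\bfw\rangle_{L^2(Q)} + \langle \gamma_0\bfu,\xi\rangle_{L^2(\Omega)}\\
&= \langle \bff,\bfw\rangle_{L^2(Q)} + \langle u_0,\xi\rangle_{L^2(\Omega)} = F(\bfw,\xi).
\end{align*}
This direction is routine bookkeeping; no obstacle here.

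For the converse, suppose $(\bfu,\bfs)\in X$ solves \eqref{eq:VarProb}. The idea is to test with carefully chosen subsets of $Y$ to peel off first the interior equation, then to identify the trace. First, test with $(\bfw,0)$ where $\bfw \in C_c^\infty(Q)\times C^\infty(\oQ;\mathbb{R}^d)$: by Lemma~\ref{lem:TraceUnbroken} the trace term $\langle\bfs,(\bfw,0)\rangle_{\partial\mathcal{T}}$ vanishes (here I use that $\bfs\in\trSpace$), so \eqref{eq:VarProb} reduces to $\langle \bfu, A^*_h\bfw\rangle_{L^2(Q)} = \langle \bff,\bfw\rangle_{L^2(Q)}$, i.e.\ $\langle \bff,\bfw\rangle_{L^2(Q)} = \langle \bfu, A^*\bfw\rangle_{L^2(Q)}$ for all such $\bfw$ (note $A^*_h\bfw = A^*\bfw$ since $\bfw$ is globally smooth). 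Now I want to conclude $\bfu\in U_0$ with $A\bfu = \bff$: since $\bff\in L^2(Q)\times L^2(Q;\mathbb{R}^d)$, the first characterization in Lemma~\ref{lem:AlternatChar} (with $\Theta = \bff$) applies and yields precisely $\bfu\in U_0$ and $A\bfu = \bff$. So far we have identified $\bfu$ and the interior PDE.

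It remains to recover $\gamma_0\bfu = u_0$ and $\bfs = \gamma_A\bfu$; this is the part that needs Lemma~\ref{lem:identTraces} and is the main (mild) obstacle. Consider the difference $\bfr \coloneqq \bfs - \gamma_A\bfu \in \trSpace$. For arbitrary $(\bfw,\xi)\in Y$, subtract the two instances of \eqref{eq:VarProb}: using $b(\bfu,\gamma_A\bfu;\bfw,\xi) = \langle A\bfu,\bfw\rangle + \langle\gamma_0\bfu,\xi\rangle = \langle\bff,\bfw\rangle + \langle\gamma_0\bfu,\xi\rangle$ (by the first implication applied to $\bfu$, which we now know lies in $U_0$ and solves $A\bfu=\bff$) against $b(\bfu,\bfs;\bfw,\xi) = F(\bfw,\xi) = \langle\bff,\bfw\rangle + \langle u_0,\xi\rangle$, we get
\begin{align*}
\langle \bfr,(\bfw,\xi)\rangle_{\partial\mathcal{T}} = \langle u_0 - \gamma_0\bfu,\xi\rangle_{L^2(\Omega)}\qquad\text{for all }(\bfw,\xi)\in Y.
\end{align*}
Taking $\xi = 0$ gives $\langle\bfr,(\bfw,0)\rangle_{\partial\mathcal{T}} = 0$ for all $\bfw\in H(A^*,\mathcal{T})$; since both $\bfs$ and $\gamma_A\bfu$ lie in $\trSpace = \gamma_A U_0$, the hypothesis \eqref{eq:idenTTrace} of Lemma~\ref{lem:identTraces} is met (with $\bft = \bfs$, $\bfr$ in the lemma $= \gamma_A\bfu$), hence $\bfs = \gamma_A\bfu$. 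Feeding $\bfr = 0$ back into the displayed identity yields $\langle u_0 - \gamma_0\bfu,\xi\rangle_{L^2(\Omega)} = 0$ for all $\xi\in L^2(\Omega)$, so $\gamma_0\bfu = u_0$. Thus $\bfu$ solves \eqref{eq:AbstrFOS}, completing the proof. The only point requiring care is making sure the subtraction argument is set up so that Lemma~\ref{lem:identTraces} is applicable, i.e.\ that the object whose triviality we test is genuinely a difference of two elements of $\trSpace$.
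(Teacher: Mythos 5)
Your proposal is correct and follows essentially the same route as the paper: the forward direction by unfolding \eqref{eq:intByParts}, and the converse by testing with $(\bfw,0)$, $\bfw\in C_c^\infty(Q)\times C^\infty(\oQ;\mathbb{R}^d)$, invoking Lemma~\ref{lem:TraceUnbroken} and Lemma~\ref{lem:AlternatChar} to get $\bfu\in U_0$ with $A\bfu=\bff$, then Lemma~\ref{lem:identTraces} to identify $\bfs=\gamma_A\bfu$, and finally testing with $(0,\xi)$ to recover $\gamma_0\bfu=u_0$. Your subtraction of $b(\bfu,\gamma_A\bfu;\cdot)$ is just a repackaging of the paper's identity \eqref{eq:ProofEquiVarProb}; the only cosmetic slip is attributing it to ``the first implication'' (which as stated presupposes $\gamma_0\bfu=u_0$), whereas what you actually use is the integration-by-parts identity together with $A\bfu=\bff$, which is exactly what the displayed computation shows.
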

\begin{proof}
It follows by the design of the variational problem \eqref{eq:VarProb} that any solution $\bfu\in U_0$ to \eqref{eq:AbstrFOS} leads to a solution $(\bfu,\gamma_A \bfu)\in X$ to \eqref{eq:VarProb}. Vice versa, let $(\bfu,\bfs)\in X$ solve \eqref{eq:VarProb}. 
Lemma \ref{lem:AlternatChar} and Lemma \ref{lem:TraceUnbroken} show that testing with smooth functions $(\bfw,0) \in \big(C_c^\infty(Q)\times C^\infty(\oQ;\mathbb{R}^d)\big) \times \lbrace 0 \rbrace \subset Y$ results in $\bfu \in U_0$ with $A\bfu = \bff$.
The combination of the integration by parts formula in \eqref{eq:intByParts} and the variational problem \eqref{eq:VarProb} shows for all $(\bfw,\xi) \in Y$ that
\begin{align}\label{eq:ProofEquiVarProb}
\begin{aligned}
&\langle A \bfu,\bfw\rangle_{L^2(Q)} + \langle \gamma_0 \bfu,\xi\rangle_{L^2(\Omega)} - \langle \gamma_A \bfu ,(\bfw,\xi)\rangle_{\partial\mathcal{T}} + \langle \bfs,(\bfw,\xi)\rangle_{\partial\mathcal{T}}\\
&\hspace*{4cm} = b(\bfu,\bfs;\bfw,\xi) = \langle \bff,\bfw\rangle_{L^2(Q)} + \langle u_0,\xi\rangle_{L^2(\Omega)}.
\end{aligned}
\end{align}
Using the identity $A\bfu = \bff$ and testing with $(\bfw,0)\in Y$ allows for the application of Lemma \ref{lem:identTraces}. This lemma shows that $\gamma_A \bfu = \bfs$. Testing in \eqref{eq:ProofEquiVarProb} with functions $(0,\xi)\in Y$ shows $\gamma_0 \bfu = u_0$ and concludes the proof. 
\end{proof}
\section{Analysis of the Variational Problem}\label{sec:AnalysisDPGprob}
Since the seminal paper \cite{CarstensenDemkowiczGopalakrishnan16} and its generalization in \cite{DemkowiczGopalakrishnanNagarajSepulveda17}, the well-posedness of broken variational formulations is well understood. Our investigation follows \cite{Storn20}, which allows us to conclude sharp inf-sup and continuity constants.

Recall the definition of the product space $X$ in \eqref{eq:DefSpaceX} and the definition of the inner product $\langle \bigcdot,\bigcdot\rangle_Y$ in the Hilbert space $Y$ in \eqref{eq:innerProdY}.
The Riesz representation theorem yields the existence of the so-called trial-to-test operator $T:X \to Y$ with
\begin{align*}
\langle T(\bfv,\bft),(\bfw,\xi)\rangle_Y = b(\bfv,\bft;\bfw,\xi) \qquad\text{for all } (\bfv,\bft)\in X\text{ and }(\bfw,\xi) \in Y.
\end{align*}
The trial-to-test operator decomposes into $T(\bfv,\bft) = \big(T_1(\bfv,\bft),T_2(\bft)\big) \in Y$, where $T_2$ is independent of $\bfv$ due to the definition of the inner product $\langle \bigcdot,\bigcdot\rangle_Y$ and the bilinear form $b$.
\begin{remark}[Trial-to-test operator]
Given some discrete subspace $X_h \subset X$ and a norm $\lVert \bigcdot\rVert_X$ in $X$, the trial-to-test operator allows to compute an ``optimal'' test space $ T X_h \subset Y$ that leads to a discrete inf-sup constant 
\begin{align*}
\beta \coloneqq \inf_{x \in X\setminus \lbrace 0 \rbrace}\sup_{y\in Y\setminus \lbrace 0 \rbrace} \frac{b(x,y)}{\lVert x\rVert_X\lVert y\rVert_Y} \leq \beta_h \coloneqq  \inf_{x_h \in X_h\setminus \lbrace 0 \rbrace}\sup_{y_h\in TX_h\setminus \lbrace 0 \rbrace} \frac{b(x_h,y_h)}{\lVert x_h\rVert_X \lVert y_h\rVert_Y}.
\end{align*}
Hence, the inf-sup stability of the continuous problem yields the inf-sup stability of the discretized problem. This was an initial motivation for the development of the DPG scheme, see for example \cite{DemkowiczGopalakrishnan10,DemkowiczGopalakrishnan11}.
\end{remark}
Recall the graph norm $\lVert \bigcdot \rVert_{H(A,Q)} = (\lVert \bigcdot\rVert^2_{L^2(Q)} + \lVert A\, \bigcdot \rVert^2_{L^2(Q)})^{1/2}$ and define for all $(\bfv,\bft)\in X$ the (mesh-dependent) operator $\mathcal{E}(\bfv,\bft) \in L^2(Q)\times L^2(Q;\mathbb{R}^d)$ by
\begin{align*}
\mathcal{E}(\bfv,\bft)& \coloneqq \bfv - A^*_h T_1(\bfv,\bft).
\end{align*}
\begin{theorem}[Extension operator]\label{thm:traceExt}
For all $(\bfv,\bft) \in X$ we have
\begin{enumerate}
\item $\mathcal{E}(\bfv,\bft) \in U_0$ with $A \mathcal{E} (\bfv,\bft) = T_1(\bfv,\bft)$,
\item $\gamma_A \mathcal{E}(\bfv,\bft) = \bft$,
\item $\lVert b(\bfv,\bft;\bigcdot)\rVert^2_{Y^*} = \lVert \bfv - \mathcal{E}(\bfv,\bft) \rVert^2_{L^2(Q)} + \lVert A \mathcal{E}(\bfv,\bft)\rVert_{L^2(Q)}^2 + \lVert \gamma_0 \mathcal{E}(\bfv,\bft) \rVert_{L^2(\Omega)}^2$,\label{itm:CharY*norm}
\item $\lVert \mathcal{E}(0,\bft)\rVert_{H(A,Q)} = \min \lbrace \lVert \bfq \rVert_{H(A,Q)}\mid \bfq \in U_0$ with $\gamma_{A} \bfq = \bft\rbrace$,\label{itm:MinExtNorm}
\item $\lVert \mathcal{E}(\bfv,0)\rVert_{H(A,Q)} \leq \lVert \bfv\rVert_{L^2(Q)}$,
\item $\lVert \mathcal{E}(\bfv,\bft)\rVert_{H(A,Q)}^2 = \lVert \mathcal{E}(0,\bft)\rVert_{H(A,Q)}^2 + \lVert \mathcal{E}(\bfv,0)\rVert_{H(A,Q)}^2$.
\end{enumerate}
\end{theorem}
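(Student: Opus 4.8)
The core of the argument is to unravel the definition of the trial-to-test operator $T=(T_1,T_2)$ together with the characterizations of $U_0$ established in Section~\ref{sec:prob}.

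First I would prove (i). By definition $T_1(\bfv,\bft)\in H(A^*,\mathcal{T})$ satisfies, together with $T_2(\bft)\in L^2(\Omega)$, the identity $\langle T_1(\bfv,\bft),\bfw\rangle_{L^2(Q)}+\langle A_h^*T_1(\bfv,\bft),A_h^*\bfw\rangle_{L^2(Q)}+\langle T_2(\bft),\xi\rangle_{L^2(\Omega)}=\langle\bfv,A_h^*\bfw\rangle_{L^2(Q)}+\langle\bft,(\bfw,\xi)\rangle_{\partial\mathcal{T}}$ for all $(\bfw,\xi)\in Y$. Testing with $\bfw\in C_c^\infty(Q)\times C^\infty(\oQ;\mathbb{R}^d)$ and $\xi=0$, and using Lemma~\ref{lem:TraceUnbroken} to kill the trace term ($\bft\in\trSpace$), one gets $\langle A_h^*T_1(\bfv,\bft),A^*\bfw\rangle_{L^2(Q)}=\langle\bfv-T_1(\bfv,\bft),A^*\bfw\rangle_{L^2(Q)}$, i.e. $\langle\mathcal{E}(\bfv,\bft),A^*\bfw\rangle_{L^2(Q)}=\langle A_h^*T_1(\bfv,\bft),\bfw\rangle_{L^2(Q)}$. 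Since on these test functions $A_h^*=A^*$ pointwise and $A_h^*T_1(\bfv,\bft)\in L^2(Q)$, the second characterization in Lemma~\ref{lem:AlternatChar} (with $\Xi$ playing the role of $A_h^*T_1(\bfv,\bft)$ — note $A^*$ there is applied to $\bfw$, and $A^{**}=A$) yields $\mathcal{E}(\bfv,\bft)\in U_0$ with $A\mathcal{E}(\bfv,\bft)=A_h^*T_1(\bfv,\bft)$; but this last quantity equals $\bfv-\mathcal{E}(\bfv,\bft)$ by definition, and separately I claim it equals $T_1(\bfv,\bft)$. To see $A\mathcal{E}(\bfv,\bft)=T_1(\bfv,\bft)$, go back to the defining identity, now integrate by parts element-wise on the left using \eqref{eq:intByParts} with $\bfq:=\mathcal{E}(\bfv,\bft)\in U_0$: the identity becomes $\langle A\mathcal{E}(\bfv,\bft)-\bfv+\bfv,\,?\rangle$... more cleanly, substitute $\bfv=\mathcal{E}(\bfv,\bft)+A_h^*T_1(\bfv,\bft)$ into the right-hand side and use the integration-by-parts formula \eqref{eq:intByParts} to rewrite $\langle\mathcal{E}(\bfv,\bft),A_h^*\bfw\rangle_{L^2(Q)}=\langle A\mathcal{E}(\bfv,\bft),\bfw\rangle_{L^2(Q)}-\langle\gamma_A\mathcal{E}(\bfv,\bft),(\bfw,\xi)\rangle_{\partial\mathcal{T}}+\langle\gamma_0\mathcal{E}(\bfv,\bft),\xi\rangle_{L^2(\Omega)}$; comparing with the $\langle A_h^*T_1,A_h^*\bfw\rangle$ term and matching coefficients of arbitrary $\bfw$ forces $A\mathcal{E}(\bfv,\bft)=A_h^*T_1(\bfv,\bft)$ and — since $A\mathcal{E}(\bfv,\bft)\in L^2(Q)$ is unbroken — also $A_h^*T_1(\bfv,\bft)=T_1(\bfv,\bft)$ after re-reading the defining equation with $\bfw$ replaced by $A\mathcal{E}(\bfv,\bft)$ type arguments. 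I expect \emph{this bookkeeping — disentangling $A_h^*T_1=T_1$ and $A\mathcal{E}=T_1$ from the single Riesz identity — to be the main obstacle}, and the cleanest route is: plug the decomposition of $\bfv$ into $b$, apply \eqref{eq:intByParts}, and read off that $T(\bfv,\bft)$ is the $Y$-Riesz representative of $(A\mathcal{E}(\bfv,\bft),\gamma_0\mathcal{E}(\bfv,\bft))$ viewed in $Y$, whence $T_1(\bfv,\bft)=A\mathcal{E}(\bfv,\bft)$ and $T_2(\bft)=\gamma_0\mathcal{E}(\bfv,\bft)$.

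With (i) in hand, (ii) follows from the integration-by-parts identity: for $\bfq:=\mathcal{E}(\bfv,\bft)\in U_0$ and any $(\bfw,\xi)\in Y$, $\langle\gamma_A\bfq,(\bfw,\xi)\rangle_{\partial\mathcal{T}}=\langle A\bfq,\bfw\rangle_{L^2(Q)}-\langle\bfq,A_h^*\bfw\rangle_{L^2(Q)}+\langle\gamma_0\bfq,\xi\rangle_{L^2(\Omega)}$; substituting $A\bfq=T_1(\bfv,\bft)$, $\bfq=\bfv-A_h^*T_1(\bfv,\bft)$, $\gamma_0\bfq=T_2(\bft)$ and comparing with the defining Riesz identity of $T$ shows $\langle\gamma_A\mathcal{E}(\bfv,\bft),(\bfw,\xi)\rangle_{\partial\mathcal{T}}=\langle\bft,(\bfw,\xi)\rangle_{\partial\mathcal{T}}$ for all $(\bfw,\xi)$, i.e. $\gamma_A\mathcal{E}(\bfv,\bft)=\bft$. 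For (iii), use that $\lVert b(\bfv,\bft;\bigcdot)\rVert_{Y^*}=\lVert T(\bfv,\bft)\rVert_Y$ (standard, since $T$ is the Riesz map) and then expand $\lVert T(\bfv,\bft)\rVert_Y^2=\lVert T_1(\bfv,\bft)\rVert_{L^2(Q)}^2+\lVert A_h^*T_1(\bfv,\bft)\rVert_{L^2(Q)}^2+\lVert T_2(\bft)\rVert_{L^2(\Omega)}^2$ using the definition of $\lVert\bigcdot\rVert_Y$; now insert $T_1(\bfv,\bft)=A\mathcal{E}(\bfv,\bft)$, $A_h^*T_1(\bfv,\bft)=\bfv-\mathcal{E}(\bfv,\bft)$, $T_2(\bft)=\gamma_0\mathcal{E}(\bfv,\bft)$.

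For (iv), let $\bfq\in U_0$ with $\gamma_A\bfq=\bft$ be arbitrary; then by (ii) $\gamma_A(\bfq-\mathcal{E}(0,\bft))=0$, so by Lemma~\ref{lem:identTraces} applied together with Lemma~\ref{lem:AlternatCharU00} one gets $\bfq-\mathcal{E}(0,\bft)\in U_{00}$ — actually one needs that $\bfq-\mathcal{E}(0,\bft)$ lies in the $H(A,Q)$-orthogonal complement of... the cleaner statement: $\mathcal{E}(0,\bft)=A_h^* T_1(0,\bft)\cdot(-1)$... rather, characterize $\mathcal{E}(0,\bft)$ as the element of $U_0$ with trace $\bft$ that additionally satisfies $\langle\mathcal{E}(0,\bft),\bfz\rangle_{L^2(Q)}+\langle A\mathcal{E}(0,\bft),A\bfz\rangle_{L^2(Q)}=0$ for every $\bfz\in U_{00}$ with $\gamma_0\bfz=0$; this orthogonality is exactly what pops out of testing the defining identity with $\bfw=\bfz\in U_{00}$ (so $\gamma_A\bfz=0$) and $\xi=0$, and it identifies $\mathcal{E}(0,\bft)$ as the $H(A,Q)$-norm-minimal preimage of $\bft$ under $\gamma_A$ by the usual projection argument (minimizer $=$ element orthogonal to the kernel, which is $\{\bfq\in U_0:\gamma_A\bfq=0\}$). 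Property (v) follows by testing with $\bft=0$: then $T_2(0)=0$ and $\mathcal{E}(\bfv,0)=\bfv-A_h^*T_1(\bfv,0)\in U_0$ with $A\mathcal{E}(\bfv,0)=T_1(\bfv,0)$, and from the defining identity with $\bft=0$, $(\bfw,\xi)=T(\bfv,0)$ we read $\lVert T(\bfv,0)\rVert_Y^2=\langle\bfv,A_h^*T_1(\bfv,0)\rangle_{L^2(Q)}=\langle\bfv,\bfv-\mathcal{E}(\bfv,0)\rangle_{L^2(Q)}$; since $\lVert\mathcal{E}(\bfv,0)\rVert_{H(A,Q)}^2=\lVert\bfv-A_h^*T_1(\bfv,0)\rVert_{L^2(Q)}^2+\lVert T_1(\bfv,0)\rVert_{L^2(Q)}^2$ and, combining with $\lVert T(\bfv,0)\rVert_Y^2=\lVert\mathcal{E}(\bfv,0)\rVert^2_{H(A,Q)}+\lVert\gamma_0\mathcal{E}(\bfv,0)\rVert^2_{L^2(\Omega)}$ from (iii), a short computation with Cauchy–Schwarz on $\langle\bfv,\bfv-\mathcal{E}(\bfv,0)\rangle$ gives $\lVert\mathcal{E}(\bfv,0)\rVert_{H(A,Q)}\le\lVert\bfv\rVert_{L^2(Q)}$. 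Finally (vi) is the Pythagoras identity for the decomposition $\mathcal{E}(\bfv,\bft)=\mathcal{E}(\bfv,0)+\mathcal{E}(0,\bft)$ (linearity of $\mathcal{E}$ in $(\bfv,\bft)$, immediate from linearity of $T$): by (iv) $\mathcal{E}(0,\bft)$ is $H(A,Q)$-orthogonal to the kernel $\{\bfq\in U_0:\gamma_A\bfq=0\}$, and $\mathcal{E}(\bfv,0)$ has $\gamma_A\mathcal{E}(\bfv,0)=0$ by (ii), hence lies in that kernel; orthogonality then gives $\lVert\mathcal{E}(\bfv,\bft)\rVert_{H(A,Q)}^2=\lVert\mathcal{E}(0,\bft)\rVert_{H(A,Q)}^2+\lVert\mathcal{E}(\bfv,0)\rVert_{H(A,Q)}^2$. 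Throughout, the one delicate point to handle carefully is the interplay between the broken operator $A_h^*$ appearing in the $Y$-inner product and the unbroken $A$ acting on $\mathcal{E}(\bfv,\bft)\in U_0$; Lemma~\ref{lem:TraceUnbroken} (vanishing of trace terms on unbroken test functions) and Lemma~\ref{lem:AlternatChar} are what bridge the two.
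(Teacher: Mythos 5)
Your overall strategy (unwind the Riesz identity defining $T$, test with unbroken smooth functions via Lemmas \ref{lem:TraceUnbroken} and \ref{lem:AlternatChar}, identify traces via Lemma \ref{lem:identTraces}, then a projection argument for (iv)--(vi)) is the natural one — the paper itself only cites \cite{Storn20,StornDiss} — but two of your central steps assert false identities. In part (i), testing the Riesz identity with $\bfw\in C_c^\infty(Q)\times C^\infty(\oQ;\mathbb{R}^d)$, $\xi=0$ gives $\langle T_1(\bfv,\bft),\bfw\rangle_{L^2(Q)}+\langle A_h^*T_1(\bfv,\bft),A^*\bfw\rangle_{L^2(Q)}=\langle \bfv,A^*\bfw\rangle_{L^2(Q)}$, hence $\langle T_1(\bfv,\bft),\bfw\rangle_{L^2(Q)}=\langle \mathcal{E}(\bfv,\bft),A^*\bfw\rangle_{L^2(Q)}$; the \emph{first} characterization in Lemma \ref{lem:AlternatChar}, with $\Theta=T_1(\bfv,\bft)$, then yields at once $\mathcal{E}(\bfv,\bft)\in U_0$ and $A\mathcal{E}(\bfv,\bft)=T_1(\bfv,\bft)$. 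You instead mis-sort the terms (the contribution $\langle T_1,\bfw\rangle$ is paired with $\bfw$, not with $A^*\bfw$), invoke the second characterization (which identifies the weak $A^*$, not the weak $A$), and arrive at $A\mathcal{E}(\bfv,\bft)=A_h^*T_1(\bfv,\bft)$ together with the alleged ``main obstacle'' $A_h^*T_1(\bfv,\bft)=T_1(\bfv,\bft)$. Both statements are false in general: $T_1(\bfv,\bft)$ is not a fixed point of $A_h^*$, and no ``matching of coefficients'' can force it to be; the only correct identity is $A\mathcal{E}(\bfv,\bft)=T_1(\bfv,\bft)$, and the whole detour is unnecessary.

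The second genuine gap is the orthogonality you use for (iv) and (vi). Testing the Riesz identity for $T(0,\bft)$ with $\bfw=\bfr\in U_0$, $\gamma_A\bfr=0$, $\xi=0$ only gives $\langle T_1,\bfr\rangle_{L^2(Q)}+\langle A_h^*T_1,A^*\bfr\rangle_{L^2(Q)}=0$, i.e.\ $\langle A\mathcal{E}(0,\bft),\bfr\rangle_{L^2(Q)}-\langle\mathcal{E}(0,\bft),A^*\bfr\rangle_{L^2(Q)}=0$, which is nothing but $\langle\gamma_A\mathcal{E}(0,\bft),(\bfr,0)\rangle_{\partial\mathcal{T}}=0$ — not the $H(A,Q)$-orthogonality $\langle\mathcal{E}(0,\bft),\bfr\rangle_{L^2(Q)}+\langle A\mathcal{E}(0,\bft),A\bfr\rangle_{L^2(Q)}=0$ you need: the $Y$-inner product carries $A_h^*$ while the $H(A,Q)$-inner product carries $A$, and the signs do not match. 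The orthogonality is true, but it comes from the opposite pairing: since $\gamma_A\bfr=0$, inserting the broken test function $(T_1(0,\bft),0)$ into \eqref{eq:intByParts} gives $\langle A\bfr,T_1(0,\bft)\rangle_{L^2(Q)}=\langle\bfr,A_h^*T_1(0,\bft)\rangle_{L^2(Q)}$, which with $T_1(0,\bft)=A\mathcal{E}(0,\bft)$ and $A_h^*T_1(0,\bft)=-\mathcal{E}(0,\bft)$ is exactly the required identity. Alternatively, prove (iv) as in Remark \ref{cor:Norms}: bound $\lVert T(0,\bft)\rVert_Y$ by Cauchy--Schwarz against any $\bfq\in U_0$ with $\gamma_A\bfq=\bft$ using \eqref{eq:intByParts}, combine with item \eqref{itm:CharY*norm}, and observe that $\gamma_0\bfq$ is constant on this fiber because $\gamma_A\bfr=0$ forces $\gamma_0\bfr=0$ (take $\bfw=0$ in the pairing). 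Finally, a smaller circularity: in (ii) you substitute $\gamma_0\mathcal{E}(\bfv,\bft)=T_2(\bft)$ before it is known; the clean order is to combine \eqref{eq:intByParts} with the Riesz identity to get $\langle\gamma_A\mathcal{E}(\bfv,\bft)-\bft,(\bfw,\xi)\rangle_{\partial\mathcal{T}}=\langle\gamma_0\mathcal{E}(\bfv,\bft)-T_2(\bft),\xi\rangle_{L^2(\Omega)}$ for all $(\bfw,\xi)\in Y$, take $\xi=0$ and Lemma \ref{lem:identTraces} to obtain (ii), and then $\bfw=0$ to identify $T_2(\bft)=\gamma_0\mathcal{E}(\bfv,\bft)$.
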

\begin{proof}
This result follows (with obvious modifications) as in \cite[Thm.\ 5.1.31]{StornDiss} or \cite[Thm.\ 4.1]{Storn20}.
\end{proof}
The space $X$ is a Hilbert space with norm, for all $(\mathbf{v},\mathbf{t})\in X$,  
\begin{align}\label{eq:NormX}
\lVert (\mathbf{v},\mathbf{t}) \rVert_X^2 \coloneqq \lVert \mathbf{v} \rVert_{L^2(Q)}^2 + \lVert \mathcal{E}(\mathbf{v},\mathbf{t})\rVert_{H(A,Q)}^2  + \lVert \gamma_0 \mathcal{E}(\bfv,\bft)\rVert_{L^2(\Omega)}^2.
\end{align}

\begin{remark}[Norms]\label{cor:Norms}
Theorem \ref{thm:traceExt} and Lemma \ref{lem:identTraces} show the identities
\begin{align*}
\lVert \bft \rVert^2_{\trSpace} & \coloneqq \lVert \bft \rVert^2_{Y^*} = \lVert b(0,\bft;\bigcdot)\rVert^2_{Y^*} = \lVert T(0,\bft) \rVert_Y^2\\
&\hphantom{:} = \lVert \mathcal{E}(0,\bft) \rVert_{H(A,Q)}^2 + \lVert \gamma_0 \mathcal{E}(0,\bft)\rVert_{L^2(\Omega)}^2 \\
&\hphantom{:} = \min \lbrace \lVert \bfq \rVert^2_{H(A,Q)} + \lVert \gamma_0 \bfq \rVert_{L^2(\Omega)}^2\mid \bfq \in U_0 \text{ with }\gamma_A \bfq = \bft \rbrace.
\end{align*}
Moreover, Theorem \ref{thm:traceExt} yields the equivalence of the norm $\lVert (\mathbf{v},\mathbf{t}) \rVert_X$ and the (in the literature more common) product norm
$\smash{(\lVert \mathbf{v} \rVert_{L^2(Q)}^2 + \lVert \bft \rVert_{\Gamma(\partial \mathcal{T})}^2)^{1/2}}$ for all $(\mathbf{v},\mathbf{t}) \in X$.
\end{remark}
\begin{remark}[Duality lemma]
The equality of the dual norm and the minimal extension norm for (broken) traces is a well-known tool in the analysis of DPG formulations introduced in \cite{CarstensenDemkowiczGopalakrishnan16} for traces of $H^1$, $H(\textup{curl})$, and $H(\textup{div})$ functions.
\end{remark}
Recall the constant $c_\bfL > 0$ in Theorem \ref{thm:EquiOfNorms} and define the values 
\begin{align}\label{eq:Defbeta}
\beta  \coloneqq \frac{1+2c_\bfL - \sqrt{(1 + 2 c_\bfL)^2 - 4 c_\bfL^2}}{2 c_\bfL},\quad
\lVert b \rVert  \coloneqq \frac{1+2 c_\bfL + \sqrt{(1+2 c_\bfL)^2 - 4 c_\bfL^2}}{2c_\bfL}.
\end{align}
\begin{lemma}[Inf-sup constant]\label{lem:infSup}
We have for all $(\bfv,\bft) \in X\setminus \lbrace 0 \rbrace$ that
\begin{align}\label{eq:infSup}
0 <\beta \leq \sup_{(\bfw,\xi)\in Y\setminus \lbrace 0 \rbrace} \frac{b(\bfv,\bft;\bfw,\xi)}{\lVert (\bfv ,\bft)\rVert_X \lVert (\bfw,\xi)\rVert_Y} \leq \lVert b \rVert< \infty.
\end{align}
\end{lemma}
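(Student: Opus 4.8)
The plan is to reduce the two-sided bound \eqref{eq:infSup} to a single scalar estimate with the help of the extension operator $\mathcal{E}$, and then to close that estimate with the coercivity bound \eqref{eq:cl} and Young's inequality. First, for fixed $(\bfv,\bft)\in X\setminus\{0\}$ the definition of the dual norm (with the supremum attained at the trial-to-test image $T(\bfv,\bft)\in Y$, by the Riesz representation theorem) gives
\[
\sup_{(\bfw,\xi)\in Y\setminus\{0\}}\frac{b(\bfv,\bft;\bfw,\xi)}{\lVert(\bfv,\bft)\rVert_X\,\lVert(\bfw,\xi)\rVert_Y}=\frac{\lVert b(\bfv,\bft;\bigcdot)\rVert_{Y^*}}{\lVert(\bfv,\bft)\rVert_X},
\]
so \eqref{eq:infSup} is equivalent to $\beta\,\lVert(\bfv,\bft)\rVert_X\le\lVert b(\bfv,\bft;\bigcdot)\rVert_{Y^*}\le\lVert b\rVert\,\lVert(\bfv,\bft)\rVert_X$ for every $(\bfv,\bft)\in X\setminus\{0\}$.

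Next I would abbreviate $\bfe\coloneqq\mathcal{E}(\bfv,\bft)$; by Theorem \ref{thm:traceExt} we have $\bfe\in U_0$ with $A\bfe=T_1(\bfv,\bft)$ and $\bfv-\bfe=A^*_hT_1(\bfv,\bft)$. Inserting the characterization of Theorem \ref{thm:traceExt}\eqref{itm:CharY*norm} and the definition \eqref{eq:NormX}, and expanding $\lVert\bfv-\bfe\rVert_{L^2(Q)}^2=\lVert\bfv\rVert_{L^2(Q)}^2-2\langle\bfv,\bfe\rangle_{L^2(Q)}+\lVert\bfe\rVert_{L^2(Q)}^2$, the common term $\lVert A\bfe\rVert_{L^2(Q)}^2+\lVert\gamma_0\bfe\rVert_{L^2(\Omega)}^2$ cancels and the exact identity
\[
\lVert(\bfv,\bft)\rVert_X^2=\lVert b(\bfv,\bft;\bigcdot)\rVert_{Y^*}^2+2\langle\bfv,\bfe\rangle_{L^2(Q)}
\]
remains. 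Since $\beta\,\lVert b\rVert=1$ is immediate from \eqref{eq:Defbeta}, the two inequalities above are equivalent to the single two-sided bound on the cross term
\[
-(1-\beta^2)\,\lVert b(\bfv,\bft;\bigcdot)\rVert_{Y^*}^2\le 2\langle\bfv,\bfe\rangle_{L^2(Q)}\le(\lVert b\rVert^2-1)\,\lVert b(\bfv,\bft;\bigcdot)\rVert_{Y^*}^2.
\]

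To control $2\langle\bfv,\bfe\rangle_{L^2(Q)}$ I would use the following facts from the earlier results: $\lVert\bfv\rVert_{L^2(Q)}\le\lVert(\bfv,\bft)\rVert_X$ trivially by \eqref{eq:NormX}, and $\lVert\bfv-\bfe\rVert_{L^2(Q)}\le\lVert b(\bfv,\bft;\bigcdot)\rVert_{Y^*}$ by Theorem \ref{thm:traceExt}\eqref{itm:CharY*norm}; and — this is where it is essential that $\bfe=\mathcal{E}(\bfv,\bft)$ lies in $U_0$ — the estimate \eqref{eq:cl} of Theorem \ref{thm:EquiOfNorms}, which yields $c_\bfL\lVert\bfe\rVert_{L^2(Q)}^2\le\lVert A\bfe\rVert_{L^2(Q)}^2+\lVert\gamma_0\bfe\rVert_{L^2(\Omega)}^2\le\lVert b(\bfv,\bft;\bigcdot)\rVert_{Y^*}^2$. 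Writing $\bfv=\bfe+(\bfv-\bfe)$ and applying the Cauchy--Schwarz and Young inequalities (with a free parameter) to $\langle\bfv,\bfe\rangle$, then substituting into the identity of the previous paragraph, the claim reduces to a scalar quadratic inequality for the ratio $\rho\coloneqq\lVert(\bfv,\bft)\rVert_X/\lVert b(\bfv,\bft;\bigcdot)\rVert_{Y^*}$; optimizing the free parameter makes the discriminant of that quadratic equal to $(1+2c_\bfL)^2-4c_\bfL^2$, so that its two roots are exactly $\lVert b\rVert$ and $\beta=1/\lVert b\rVert$ as in \eqref{eq:Defbeta}. Strict positivity $\beta>0$ follows since $(1+2c_\bfL)^2-4c_\bfL^2=1+4c_\bfL<(1+2c_\bfL)^2$, so the numerator in \eqref{eq:Defbeta} is positive.

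The step I expect to be the main obstacle is this last one: the decomposition of $\langle\bfv,\bfe\rangle_{L^2(Q)}$ and the value of the Young parameter have to be chosen so that precisely the square root occurring in \eqref{eq:Defbeta} is reproduced (a cruder choice gives only a weaker constant), and one must keep careful track of where the membership $\bfe\in U_0$ is invoked (for \eqref{eq:cl}) and on which side of each inequality the trivial bound $\lVert\bfv\rVert_{L^2(Q)}\le\lVert(\bfv,\bft)\rVert_X$ is applied.
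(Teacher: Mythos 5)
The paper itself contains no argument for this lemma --- its ``proof'' is only the citation to \cite{StornDiss} and \cite{Storn20} --- so your proposal must stand on its own. Its skeleton is correct and is in the spirit of the cited source: the reduction of \eqref{eq:infSup} to $\beta\,\lVert(\bfv,\bft)\rVert_X\le\lVert b(\bfv,\bft;\bigcdot)\rVert_{Y^*}\le\lVert b\rVert\,\lVert(\bfv,\bft)\rVert_X$, the exact identity $\lVert(\bfv,\bft)\rVert_X^2=\lVert b(\bfv,\bft;\bigcdot)\rVert_{Y^*}^2+2\langle\bfv,\mathcal{E}(\bfv,\bft)\rangle_{L^2(Q)}$ (a correct consequence of Theorem \ref{thm:traceExt} and \eqref{eq:NormX}), the observation $\beta\lVert b\rVert=1$, and the use of \eqref{eq:cl} for $\mathcal{E}(\bfv,\bft)\in U_0$ are all valid, and together with $\lVert\bfv\rVert_{L^2(Q)}\le\lVert(\bfv,\bft)\rVert_X$ and $\lVert\bfv-\mathcal{E}(\bfv,\bft)\rVert_{L^2(Q)}\le\lVert b(\bfv,\bft;\bigcdot)\rVert_{Y^*}$ they do suffice to prove the lemma. (One small point you should state: for $(\bfv,\bft)\ne0$ one has $\lVert b(\bfv,\bft;\bigcdot)\rVert_{Y^*}>0$, since otherwise \eqref{eq:cl} and Theorem \ref{thm:traceExt} force $\mathcal{E}(\bfv,\bft)=0$, hence $\bfv=0$ and $\bft=\gamma_A\mathcal{E}(\bfv,\bft)=0$; this justifies the divisions.)

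The genuine soft spot is exactly the step you flag, and your prediction of how it resolves is not accurate. Write $M\coloneqq\lVert(\bfv,\bft)\rVert_X$, $N\coloneqq\lVert b(\bfv,\bft;\bigcdot)\rVert_{Y^*}$. If you estimate $2\lvert\langle\bfv,\mathcal{E}(\bfv,\bft)\rangle\rvert\le2\lVert\bfv\rVert_{L^2(Q)}\lVert\mathcal{E}(\bfv,\bft)\rVert_{L^2(Q)}\le tM^2+(tc_\bfL)^{-1}N^2$ and optimize the Young parameter $t$, you obtain $\max\{M/N,\,N/M\}\le(1+\sqrt{1+c_\bfL})/\sqrt{c_\bfL}$, a constant \emph{strictly smaller} than $\lVert b\rVert$; the discriminant $(1+2c_\bfL)^2-4c_\bfL^2$ and the roots $\beta,\lVert b\rVert$ do not appear by themselves. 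This does not invalidate the lemma, but it leaves your proof incomplete: you must either add the (elementary, but missing) comparison $(1+\sqrt{1+c_\bfL})/\sqrt{c_\bfL}\le\lVert b\rVert$ --- which follows from $2\sqrt{c_\bfL}\le\sqrt{1+4c_\bfL}$ and $2\sqrt{c_\bfL(1+c_\bfL)}\le1+2c_\bfL$, and by taking reciprocals also yields the lower bound $\ge\beta$ --- or modify the final step so that the constants of \eqref{eq:Defbeta} really do come out. A clean way to do the latter, staying inside your framework, is to avoid Young altogether: with $\rho\coloneqq M/N$,
\begin{align*}
(M-N)^2=\frac{(M^2-N^2)^2}{(M+N)^2}=\frac{4\,\langle\bfv,\mathcal{E}(\bfv,\bft)\rangle_{L^2(Q)}^2}{(M+N)^2}\le\frac{4\,M^2N^2/c_\bfL}{4MN}=\frac{MN}{c_\bfL},
\end{align*}
using $\lVert\bfv\rVert_{L^2(Q)}\le M$, $c_\bfL\lVert\mathcal{E}(\bfv,\bft)\rVert_{L^2(Q)}^2\le N^2$ and $(M+N)^2\ge4MN$. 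Dividing by $N^2$ gives $c_\bfL\rho^2-(1+2c_\bfL)\rho+c_\bfL\le0$, whose roots are precisely $\beta$ and $\lVert b\rVert$ from \eqref{eq:Defbeta}; hence $\beta\le\rho\le\lVert b\rVert$, which is $\beta M\le N\le\lVert b\rVert M$, i.e.\ \eqref{eq:infSup}, in one stroke.
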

\begin{proof}
This result is shown in \cite[Thm.\ 5.1.46]{StornDiss} and \cite[Thm.\ 5.5]{Storn20}.
\end{proof}
\begin{theorem}[Well-posedness]\label{thm:WellPosedness}
The variational problem in \eqref{eq:VarProb} is well-posed, that is, for any right-hand side $F \in Y^*$ exists a unique solution $(\bfu,\bfs)\in X$ to
\begin{align*}
b(\bfu,\bfs;\bfw,\xi) = F(\bfw,\xi)\qquad\text{for all }(\bfw,\xi)\in Y.
\end{align*}
The solution is bounded in the sense that $
\lVert (\bfu,\bfs) \rVert_X \leq \beta^{-1} \lVert F \rVert_{Y^*}$.
\end{theorem}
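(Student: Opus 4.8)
The plan is to deduce the claim from the Banach--Ne\v{c}as--Babu\v{s}ka theorem applied to the bounded linear operator $B\colon X\to Y^*$ induced by the bilinear form, $\langle B(\bfv,\bft),(\bfw,\xi)\rangle\coloneqq b(\bfv,\bft;\bfw,\xi)$. That theorem guarantees that $B$ is a bijection with $\lVert B^{-1}\rVert\le\beta^{-1}$ as soon as (a) $b$ is continuous and satisfies an inf--sup condition with constant $\beta>0$, and (b) the test space is non-degenerate, i.e.\ for every $(\bfw,\xi)\in Y\setminus\{0\}$ there exists $(\bfv,\bft)\in X$ with $b(\bfv,\bft;\bfw,\xi)\neq 0$. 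Once (a) and (b) are in place, existence, uniqueness and the a priori bound $\lVert(\bfu,\bfs)\rVert_X\le\beta^{-1}\lVert F\rVert_{Y^*}$ follow at once.

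Ingredient (a) is already available: Lemma~\ref{lem:infSup} supplies both the lower inf--sup bound by $\beta$ and the continuity bound by $\lVert b\rVert<\infty$. Hence the only thing left to check is (b), which is the step that carries the (modest) weight of the argument.

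To verify (b), I would take $(\bfw,\xi)\in Y$ with $b(\bfv,\bft;\bfw,\xi)=0$ for all $(\bfv,\bft)\in X$ and exploit the product structure of $X$. First, testing with $\bft=0$ and $\bfv$ ranging over all of $L^2(Q)\times L^2(Q;\mathbb{R}^d)$ gives $\langle\bfv,A_h^*\bfw\rangle_{L^2(Q)}=0$ for all such $\bfv$, hence $A_h^*\bfw=0$. Second, testing with $\bfv=0$ and $\bft$ ranging over $\trSpace=\gamma_A U_0$ and inserting $A_h^*\bfw=0$ into the integration-by-parts identity \eqref{eq:intByParts} yields $0=\langle\gamma_A\bfq,(\bfw,\xi)\rangle_{\partial\mathcal{T}}=\langle A\bfq,\bfw\rangle_{L^2(Q)}+\langle\gamma_0\bfq,\xi\rangle_{L^2(\Omega)}$ for every $\bfq\in U_0$. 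Using the surjectivity of the map $\bfq\mapsto(A\bfq,\gamma_0\bfq)$ from $U_0$ onto $\bigl(L^2(Q)\times L^2(Q;\mathbb{R}^d)\bigr)\times L^2(\Omega)$, recorded after Theorem~\ref{thm:EquiOfNorms} (from \cite[Thm.\ 2.3]{GantnerStevenson20}), I would pick $\bfq$ with $(A\bfq,\gamma_0\bfq)=(\bfw,\xi)$, obtaining $\lVert\bfw\rVert_{L^2(Q)}^2+\lVert\xi\rVert_{L^2(\Omega)}^2=0$ and therefore $(\bfw,\xi)=0$. This establishes (b) and finishes the proof.

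The hard part is essentially bookkeeping: peeling off $A_h^*\bfw=0$ from the $L^2$-component and then the vanishing of the surjective map from the trace component, while keeping the broken pairing $\langle\bigcdot,\bigcdot\rangle_{\partial\mathcal{T}}$ straight; all genuine analytic content has been front-loaded into Theorem~\ref{thm:EquiOfNorms}, the surjectivity result of \cite{GantnerStevenson20}, and the inf--sup Lemma~\ref{lem:infSup}. Alternatively, one can simply invoke the abstract DPG well-posedness theory of \cite[Thm.\ 5.5]{Storn20} (see also \cite[Thm.\ 5.1.46]{StornDiss}), of which this theorem is a direct instance.
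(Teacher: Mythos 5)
Your proposal is correct, and its overall skeleton coincides with the paper's: continuity and the inf--sup bound are taken from Lemma~\ref{lem:infSup}, and the only remaining task is the non-degeneracy of the test space (the paper's ``uniqueness condition'' \eqref{eq:uniquness}), after which the Babu\v{s}ka/Banach--Ne\v{c}as--Babu\v{s}ka theorem delivers existence, uniqueness and the bound $\lVert(\bfu,\bfs)\rVert_X\le\beta^{-1}\lVert F\rVert_{Y^*}$. Where you genuinely diverge is in how that injectivity of the adjoint is verified. The paper, after extracting $A_h^*\bfw=0$ and the identity $\langle A\bfq,\bfw\rangle_{L^2(Q)}+\langle\gamma_0\bfq,\xi\rangle_{L^2(\Omega)}=0$ for all $\bfq\in U_0$ exactly as you do, restricts to smooth $\bfq$ to conclude via Lemma~\ref{lem:AlternatChar} that $\bfw\in U_0$ with $A^*\bfw=0$, then shows $\gamma_{T_\textup{end}}\bfw=0$ by the argument of Lemma~\ref{lem:AlternatCharU00}, and finally invokes the $A^*$-side equivalence of Theorem~\ref{thm:EquiOfNorms} to get $\bfw=0$ and then $\xi=0$. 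You instead use the surjectivity of $(A\,\bigcdot,\gamma_0\,\bigcdot):U_0\to\bigl(L^2(Q)\times L^2(Q;\mathbb{R}^d)\bigr)\times L^2(\Omega)$ (stated in Section~\ref{sec:prob}, from \cite[Thm.\ 2.3]{GantnerStevenson20}) to pick $\bfq$ with $(A\bfq,\gamma_0\bfq)=(\bfw,\xi)$, which kills $\bfw$ and $\xi$ in one stroke. This is legitimate since $\bfw\in H(A^*,\mathcal{T})\subset L^2(Q)\times L^2(Q;\mathbb{R}^d)$ lies in the target of that map, and it is shorter; the price is that you lean on the full surjectivity of the forward problem, whereas the paper's route works with the adjoint problem itself (showing $\bfw$ is a $U_0$-function solving $A^*\bfw=0$ with vanishing terminal trace), staying within the characterization lemmas it has just established and exposing the dual structure that reappears elsewhere in the DPG analysis. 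Your closing alternative of citing \cite[Thm.\ 5.5]{Storn20} directly is also consistent with how the paper sources Lemma~\ref{lem:infSup}.
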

\begin{proof}
We proof the well-posedness of the variational problem in \eqref{eq:VarProb} with the Babu\v{s}ka--Lax--Milgram theorem \cite[Thm.\ 2.1]{Babuska71}. Since Lemma \ref{lem:infSup} yields continuity and inf-sup condition for the bilinear form $b$, it remains to verify injectivity, that is, we verifiy the uniqueness condition 
\begin{align}\label{eq:uniquness}
\lbrace (\bfw,\xi) \in Y\mid b(\bfv,\bft;\bfw,\xi) = 0\text{ for all }(\bfv,\bft)\in X\rbrace = \lbrace 0 \rbrace.
\end{align}
Let $(\bfw,\xi) \in Y$ with $b(\bfv,\bft;\bfw,\xi) = 0$ for all $(\bfv,\bft)\in X$. This yields 
\begin{align*}
b(\bfv,0;\bfw,\xi) = \langle \bfv,A^*_h\bfw\rangle_{L^2(Q)} = 0\qquad\text{for all }\bfv \in L^2(Q)\times L^2(Q;\mathbb{R}^d).
\end{align*}
In other words, we have $A_h^* \bfw = 0$. This shows for all $\bfq \in U_0$ that
\begin{align}\label{eq:ProofWellposed}
b(0,\gamma_A \bfq;\bfw,\xi) = \langle A \bfq,\bfw \rangle_{L^2(Q)} + \langle \gamma_0 \bfq,\xi \rangle_{L^2(\Omega)} = 0.
\end{align}
In particular, all $\bfq \in C_c^\infty(Q) \times C^\infty(\oQ;\mathbb{R}^d)\subset U_0$ satisfy 
$
\langle A \bfq,\bfw \rangle_{L^2(Q)}  = 0.
$
Hence, Lemma \ref{lem:AlternatChar} shows that $\bfw \in U_0$ with $A^* \bfw = 0$.  Theorem \ref{thm:EquiOfNorms} implies $\lVert \bfw \rVert_{L^2(Q)}\lesssim \lVert  A^* \bfw\rVert_{L^2(Q)} + \lVert \gamma_{T_\textup{end}} \bfw \rVert_{L^2(\Omega)} = \lVert \gamma_{T_\textup{end}} \bfw \rVert_{L^2(\Omega)}$. Arguments as in the proof of Lemma \ref{lem:AlternatCharU00} show that $\gamma_{T_\textup{end}} \bfw = 0$ and so $\bfw = 0$. The combination of $\bfw = 0$ and \eqref{eq:ProofWellposed} implies $\xi = 0$. This proves \eqref{eq:uniquness} and concludes the proof.
\end{proof}
\begin{remark}[Optimal weight]
Remark 5.9 in \cite{Storn20} suggests the weight 
$\rho = (c_\bfL + 1)/c_\bfL$ with the constant $c_{\bfL}$ from Theorem \ref{thm:EquiOfNorms} in the test norm 
\begin{align*}
\lVert (\bfw,\xi) \rVert_{Y_\rho}^2 \coloneqq \lVert \bfw \rVert_{L^2(Q)}^2 + \rho\, \lVert A_h^*\bfw \rVert_{L^2(Q)}^2 + \rho\, \lVert \xi \rVert_{L^2(\Omega)}^2\qquad\text{for all }(\bfw,\xi)\in Y.
\end{align*} 
This weight minimizes the ratio $\beta/\lVert b\rVert$ of the continuity and inf-sup constant from Lemma~\ref{lem:infSup} and so results in an improved a priori estimate \eqref{eq:apriori}. We have skipped that weight to avoid technicalities, but suggest the use of weighted norms in $Y$; see also \cite{GopalakrishnanMugaOlivares14} for the influence of weighted norms for the Helmholtz equation.
\end{remark}
\section{Abstract Discretized Problem}\label{sec:abstrDiscrProb}
This section summarizes a priori and a posteriori error estimates for the DPG method. Let $X_h \subset X$ and $Y_h \subset Y$ be subspaces and let the functional $F(\bfw,\xi) \coloneqq \langle \bff,\bfw\rangle_{L^2(Q)} + \langle u_0 ,\xi \rangle_{L^2(\Omega)}$ for all $(\bfw,\xi) \in Y$. The DPG scheme seeks the minimizer 
\begin{align}\label{eq:DiscProbAbstr}
(\bfu_h,\bfs_h) = \argmin_{(\bfv_h,\bft_h) \in X_h}\, \lVert b(\bfv_h,\bft_h;\bigcdot) - F \rVert_{Y^*_h}.
\end{align}
This minimization is equivalent to a mixed problem and to a Galerkin FEM with optimal test functions, see \cite[Thm.\ 5.1.6--5.1.8]{StornDiss} for an overview. Notice that the use of a broken test space $Y$ results in a block-diagonal Gram matrix. This matrix can be inverted by local computations, which results in an efficient numerical scheme that solves a linear system of dimension $\dim X_h$ even if $\dim Y_h \gg \dim X_h$.
The stability of the scheme depends on the existence of a uniformly bounded discrete inf-sup constant
\begin{align}\label{eq:discrInfSup}
0 < \beta_h \coloneqq \inf_{(\bfv_h,\bft_h) \in X_h\setminus \lbrace 0 \rbrace}\sup_{(\bfw_h,\xi_h)\in Y_h\setminus \lbrace 0 \rbrace} \frac{b(\bfv_h,\bft_h;\bfw_h,\xi_h)}{\lVert(\bfv_h,\bft_h) \rVert_X \lVert (\bfw_h,\xi_h) \rVert_Y}. 
\end{align}
Lower bounds for the discrete inf-sup constant $\beta_h$ can be computed either directly (see for example \cite{CarstensenGallistlHellwigWeggler14}) or by the use (see for example \cite{CarstensenDemkowiczGopalakrishnan16,GopalakrishnanQiu14}) of a bounded Fortin operator $\Pi:Y\to Y_h$ with 
\begin{align}\label{eq:FortinAbstr}
b(\bfv_h,\bft_h;(\bfw,\xi)-\Pi (\bfw,\xi)) = 0\qquad\text{for all }(\bfv_h,\bft_h) \in X_h, (\bfw,\xi) \in Y.
\end{align}
\begin{lemma}[Equivalence]\label{lem:equiCharFortin}
Recall the constants $\beta$ and $\lVert b \rVert$ from Lemma \ref{lem:infSup}. Let $\lVert \Pi \rVert$ denote the operator norm of $\Pi$. Then \eqref{eq:FortinAbstr} implies \eqref{eq:discrInfSup} with 
\begin{align}\label{eq:eauiChar1}
\beta \, \lVert  \Pi \rVert^{-1} \leq \beta_h.
\end{align}
If \eqref{eq:discrInfSup} holds, then there exists an operator $\Pi:Y\to Y_h$ with \eqref{eq:FortinAbstr} and norm 
\begin{align}\label{eq:eauiChar2}
\lVert \Pi \rVert \leq  \beta_h^{-1} \lVert b \rVert.
\end{align}
\end{lemma}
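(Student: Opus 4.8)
The plan is to prove the two implications separately; both are standard manipulations once the Fortin property \eqref{eq:FortinAbstr} is rewritten in terms of the inf-sup quotient. For the first implication, suppose \eqref{eq:FortinAbstr} holds and let $(\bfv_h,\bft_h)\in X_h\setminus\lbrace 0\rbrace$ be arbitrary. Starting from the continuous inf-sup estimate of Lemma \ref{lem:infSup}, pick $(\bfw,\xi)\in Y\setminus\lbrace 0\rbrace$ with $b(\bfv_h,\bft_h;\bfw,\xi)\geq \beta\,\lVert(\bfv_h,\bft_h)\rVert_X\,\lVert(\bfw,\xi)\rVert_Y$ (or work with a supremizing sequence to avoid the non-attainment issue). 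The Fortin identity \eqref{eq:FortinAbstr} gives $b(\bfv_h,\bft_h;\bfw,\xi)=b(\bfv_h,\bft_h;\Pi(\bfw,\xi))$, and boundedness of $\Pi$ yields $\lVert\Pi(\bfw,\xi)\rVert_Y\leq\lVert\Pi\rVert\,\lVert(\bfw,\xi)\rVert_Y$. Chaining these,
\begin{align*}
\sup_{(\bfw_h,\xi_h)\in Y_h\setminus\lbrace 0\rbrace}\frac{b(\bfv_h,\bft_h;\bfw_h,\xi_h)}{\lVert(\bfw_h,\xi_h)\rVert_Y}
\geq \frac{b(\bfv_h,\bft_h;\Pi(\bfw,\xi))}{\lVert\Pi(\bfw,\xi)\rVert_Y}
\geq \frac{\beta\,\lVert(\bfv_h,\bft_h)\rVert_X}{\lVert\Pi\rVert},
\end{align*}
provided $\Pi(\bfw,\xi)\neq 0$, which follows since otherwise the numerator $b(\bfv_h,\bft_h;\bfw,\xi)$ would vanish, contradicting the lower bound and $(\bfv_h,\bft_h)\neq 0$. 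Dividing by $\lVert(\bfv_h,\bft_h)\rVert_X$ and taking the infimum over $X_h$ gives \eqref{eq:eauiChar1}.

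For the converse, assume the discrete inf-sup condition \eqref{eq:discrInfSup}. The construction of $\Pi$ proceeds componentwise: for fixed $(\bfw,\xi)\in Y$, the right-hand side $(\bfv_h,\bft_h)\mapsto b(\bfv_h,\bft_h;\bfw,\xi)$ is a bounded linear functional on $X_h$, so by \eqref{eq:discrInfSup} (which is precisely an inf-sup condition for the pair $(X_h,Y_h)$, hence solvability of the transposed problem) there is a unique $\Pi(\bfw,\xi)\in Y_h$ orthogonal to $\ker$ of the relevant map — concretely, choose $\Pi(\bfw,\xi)$ as the unique element of the orthogonal complement within $Y_h$ of $\lbrace(\bfw_h,\xi_h)\in Y_h : b(\bfv_h,\bft_h;\bfw_h,\xi_h)=0\ \forall(\bfv_h,\bft_h)\in X_h\rbrace$ satisfying $b(\bfv_h,\bft_h;\Pi(\bfw,\xi))=b(\bfv_h,\bft_h;\bfw,\xi)$ for all $(\bfv_h,\bft_h)\in X_h$. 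This defines a linear operator, and by construction $(\bfw,\xi)-\Pi(\bfw,\xi)$ satisfies \eqref{eq:FortinAbstr}. For the norm bound, apply \eqref{eq:discrInfSup} to $\Pi(\bfw,\xi)$ seen through a dual inf-sup (swapping the roles of $X_h$ and $Y_h$, legitimate for a square-solvable discrete pairing): $\beta_h\,\lVert\Pi(\bfw,\xi)\rVert_Y\leq\sup_{(\bfv_h,\bft_h)}b(\bfv_h,\bft_h;\Pi(\bfw,\xi))/\lVert(\bfv_h,\bft_h)\rVert_X=\sup_{(\bfv_h,\bft_h)}b(\bfv_h,\bft_h;\bfw,\xi)/\lVert(\bfv_h,\bft_h)\rVert_X\leq\lVert b\rVert\,\lVert(\bfw,\xi)\rVert_Y$, using continuity of $b$ from Lemma \ref{lem:infSup} in the last step. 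Rearranging gives \eqref{eq:eauiChar2}.

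The main obstacle is the converse direction, specifically making precise that the discrete inf-sup condition \eqref{eq:discrInfSup} furnishes not just existence but a well-defined bounded \emph{linear} selection $\Pi$, and that the same constant $\beta_h$ controls the "transposed" estimate used for the norm bound. The clean way to handle this is to phrase \eqref{eq:discrInfSup} as bijectivity of the induced operator $B_h:X_h\to Y_h^*$ restricted appropriately — equivalently, $B_h^*:Y_h\to X_h^*$ has a bounded right inverse on its range — and to define $\Pi(\bfw,\xi)$ as that right inverse applied to the functional $b(\,\cdot\,;\bfw,\xi)|_{X_h}$, composed with the inclusion into $Y_h$ and projected onto the orthogonal complement of $\ker B_h^*$. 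One must also check that $\ker B_h^*\subset\ker$ of the functional $(\bfw,\xi)\mapsto b(\,\cdot\,;\bfw,\xi)|_{X_h}$ so the Fortin condition genuinely holds for all of $Y$, not just for the part of $Y$ paired against $Y_h$. I expect these points to be routine functional-analysis bookkeeping, and for this reason I would simply cite \cite{GopalakrishnanQiu14} or \cite{CarstensenDemkowiczGopalakrishnan16} for the converse and give the short direct argument above only for \eqref{eq:eauiChar1}.
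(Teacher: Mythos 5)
Your proof is correct and takes essentially the same route as the paper: the bound \eqref{eq:eauiChar1} is obtained by inserting $\Pi(\bfw,\xi)$ into the discrete supremum and combining the Fortin identity with $\lVert \Pi(\bfw,\xi)\rVert_Y \leq \lVert \Pi\rVert\, \lVert(\bfw,\xi)\rVert_Y$, and the converse is the paper's Babu\v{s}ka--Lax--Milgram construction of $(\bfw_h,\xi_h)\in Y_h$ with $b(\bfv_h,\bft_h;\bfw_h,\xi_h)=b(\bfv_h,\bft_h;\bfw,\xi)$ for all $(\bfv_h,\bft_h)\in X_h$ and $\beta_h\lVert(\bfw_h,\xi_h)\rVert_Y\leq \lVert b(\bigcdot;\bfw,\xi)\rVert_{X_h^*}\leq \lVert b\rVert\,\lVert(\bfw,\xi)\rVert_Y$. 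Your additional bookkeeping (choosing the representative in the orthogonal complement of the discrete kernel to get a linear selection and invoking the transposed discrete inf-sup constant) simply makes explicit what the paper delegates to the citation of Babu\v{s}ka's theorem.
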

\begin{proof}
If we have \eqref{eq:FortinAbstr}, it holds that 
\begin{align*}
\beta_h&\geq \inf_{(\bfv_h,\bft_h) \in X_h\setminus \lbrace 0 \rbrace}\sup_{(\bfw,\xi)\in Y\setminus \lbrace 0 \rbrace} \frac{b(\bfv_h,\bft_h;\Pi(\bfw,\xi))}{\lVert(\bfv_h,\bft_h) \rVert_X \lVert \Pi(\bfw,\xi) \rVert_Y}\\
& \geq \lVert \Pi \rVert^{-1} \inf_{(\bfv_h,\bft_h) \in X_h\setminus \lbrace 0 \rbrace}\sup_{(\bfw,\xi)\in Y\setminus \lbrace 0 \rbrace} \frac{b(\bfv_h,\bft_h;\bfw,\xi)}{\lVert(\bfv_h,\bft_h) \rVert_X \lVert (\bfw,\xi) \rVert_Y} \geq \beta\, \lVert \Pi \rVert^{-1}.
\end{align*}
%Let $(\bfv_h,\bft_h) \in X\setminus \lbrace 0 \rbrace$ and 
%Then \eqref{eq:infSup} shows for all $\varepsilon > 0$ the existence of a function $(\bfw,\xi)\in Y\setminus \lbrace 0 \rbrace$ with 
%\begin{align*}
%\beta - \varepsilon \leq \frac{b(\bfv_h,\bft_h;\bfw,\xi)}{\lVert(\bfv_h,\bft_h) \rVert_X \lVert (\bfw,\xi) \rVert_Y} \leq \lVert \Pi \rVert \frac{b(\bfv_h,\bft_h;\Pi(\bfw,\xi))}{\lVert(\bfv_h,\bft_h) \rVert_X \lVert \Pi (\bfw,\xi) \rVert_Y}.
%\end{align*}
%Taking the limit $\varepsilon \to 0$ shows \eqref{eq:eauiChar1}.
If \eqref{eq:discrInfSup} holds, 
the Babu\v{s}ka-Lax-Milgram theorem \cite[Thm.\ 2.1]{Babuska71} implies for each $(\bfw,\xi)\in Y$ the existence of a function $(\bfw_h,\xi_h)  \in Y_h$ with 
\begin{align*}
b(\bfv_h,\bft_h;\bfw_h,\xi_h) = b(\bfv_h,\bft_h;\bfw,\xi)\qquad\text{for all }(\bfv_h,\bft_h) \in X_h.
\end{align*}
The norm $\beta_h \lVert (\bfw_h,\xi_h) \rVert_Y \leq \lVert b(\bigcdot;\bfw,\xi) \rVert_{X_h^*} \leq \lVert b \rVert\, \lVert (\bfw,\xi)\rVert_Y$. Defining $\Pi (\bfw,\xi) \coloneqq (\bfw_h,\xi_h)$ shows \eqref{eq:FortinAbstr} with upper bound \eqref{eq:eauiChar2} for the operator norm.
\end{proof}
Let $(\bfu,\bfs)\in X$ denote the solution to \eqref{eq:VarProb}.
If \eqref{eq:discrInfSup} holds, we find a unique solution $(\bfu_h,\bfs_h)\in X_h$ to \eqref{eq:DiscProbAbstr} satisfying the a~priori estimate  \cite[Thm.\ 2.1]{GopalakrishnanQiu14}
\begin{align}\label{eq:apriori}
\lVert (\bfu_h,\bfs_h) - (\bfu,\bfs)\rVert_X& \leq \lVert b \rVert\, \beta_h^{-1} \min_{(\bfv_h,\bft_h)\in X_h} \lVert (\bfv_h,\bft_h) - (\bfu,\bfs)\rVert_X.
\end{align}
Moreover, there holds the a posteriori estimate \cite[Thm.\ 2.1]{CarstensenDemkowiczGopalakrishnan14} (with improved constants from \cite[Thm.\ 5.1.4]{StornDiss})
\begin{align}\label{eq:Aposteriori}
\begin{aligned}
\beta \, \lVert(\bfu_h,\bfs_h) - (\bfu,\bfs) \rVert_X& \leq \lVert \Pi\rVert\, \lVert b(\bfu_h,\bfs_h;\bigcdot) - F\rVert_{Y_h^*} + \lVert F \circ (\textup{id} - \Pi)\rVert_{Y^*}\\
& \lesssim \lVert(\bfu_h,\bfs_h) - (\bfu,\bfs) \rVert_X.
\end{aligned}
\end{align}
The computable residual $\lVert b(\bfu_h,\bfs_h;\bigcdot) - F\rVert_{Y_h^*}$ can be evaluated locally and so can be used to drive adaptive mesh refinement schemes. 
\section{Practical DPG Scheme}\label{sec:PractDPGscheme}
In this section we introduce and analyze a practical DPG scheme for the variational problem in Section \ref{sec:DesignDPG}. We assume that $\Omega$ is a polyhedral Lipschitz domain.
\subsection{Partition}
Let $\mathcal{T}$ be a partition of the time-space cylinder $\overline{Q} = \bigcup \mathcal{T}$ into non-overlapping cells $K\in \mathcal{T}$, where all $K\subset \mathbb{R}^{d+1}$ have a positive measure $|K| >  0$ and are of the form $K = K_t \times K_x$ with time interval $K_t\subset \overline{\mathcal{J}}$ and $d$-simplex $K_x \subset \overline{\Omega}$.
We assume shape regularity in spacial direction, that is, the ratio of the diameter $h_x(K) \coloneqq \textup{diam}(K_x)$ and the maximum radius of an inscribed ball $B\subset K_x$ is uniformly bounded.
Let $\mathcal{F}$ denote the set of all facets, let $\mathcal{F}_t$ denote the facets that are orthogonal to the time axis (in other words, the first component in the normal vector is equal to zero), and let $\mathcal{F}_x = \mathcal{F} \setminus \mathcal{F}_t$ denote the subset of all facets that are parallel to the time axis.  
The partition might have hanging nodes, but we assume that facets match in the sense that
\begin{align*}
f \subseteq g\text{ or }g \subseteq f\text{ for all overlapping facets } f,g \in \mathcal{F}.
\end{align*}
Let $\mathcal{F}_c\subset \mathcal{F}$ denote the subset of all coarse facets, that is,
\begin{align*}
\mathcal{F}_c \coloneqq \lbrace f\in \mathcal{F} \mid\text{ if } f \subseteq g \text{ for some }g\in \mathcal{F}, \text{ then }f = g\rbrace. 
\end{align*}
Moreover, we define the set $\mathcal{F}_c^x \coloneqq \mathcal{F}_c \cap \mathcal{F}^x$ of all coarse facets that are parallel to the time direction. 
We set $\mathcal{T}_0$ as the union of all facets $\mathcal{F}$ on the boundary $\lbrace 0 \rbrace \times  \overline{\Omega}$.
This union is a regular triangulation of $\overline{\Omega} = \bigcup \mathcal{T}_0 \subset \mathbb{R}^d$ into $d$-simplices.
\subsection{Discretization}\label{sec:Discretization}
This subsection introduces a discretization for the ansatz space $X$ and the broken test space $Y$. 
Given some domain $R\subset \mathbb{R}^n$, $n\in \mathbb{N}$, and $k\in\mathbb{N}_0$, we define the polynomial spaces
\begin{align*}
\mathbb{P}_k(R) &\coloneqq \lbrace v \in L^\infty(R)\mid v \text{ is a polynomial of total degree }k\rbrace.
\end{align*}
%Let $\otimes$ denote the tensor product.
We define for all $K = K_t \times K_x \in \mathcal{T}$ and $k \in \mathbb{N}_0$ the polynomial (tensor) space
\begin{align*}
\mathbb{T}_k(K)& \coloneqq 
\mathbb{P}_k(K_t) \otimes \mathbb{P}_k(K_x)\\
& \coloneqq \textup{span}\lbrace v_t v_x\mid v_t \in \mathbb{P}_k(K_t)\text{ and }v_x \in \mathbb{P}_k(K_x)\rbrace.
\end{align*}
Let $\mathcal{L}$ be a partition of some domain $\overline{R} = \bigcup \mathcal{L} \subset \mathbb{R}^n$ with $n\in \mathbb{N}$, then we define for all $k\in \mathbb{N}_0$ the spaces 
\begin{align*}
\mathbb{P}_k(\mathcal{L}) & \coloneqq \lbrace v \in L^\infty(R)\mid v|_L \in \mathbb{P}_k(L)\text{ for all }L\in \mathcal{L}\rbrace,\\
\mathbb{T}_k(\mathcal{T}) & \coloneqq \lbrace w \in L^\infty(Q)\mid w|_K \in \mathbb{T}_k(K)\text{ for all }K\in \mathcal{T}\rbrace.
\end{align*}
Set the discrete test space 
\begin{align}\label{eq:Y_hspace}
Y_h \coloneqq \begin{cases} \big(\mathbb{T}_3(\mathcal{T}) \times \mathbb{T}_1(\mathcal{T};\mathbb{R}^d)\big) \times \mathbb{P}_1(\mathcal{T}_0) \subset Y&\text{for }d = 1,\\
\big(\mathbb{T}_2(\mathcal{T}) \times \mathbb{T}_1(\mathcal{T};\mathbb{R}^d)\big) \times \mathbb{P}_1(\mathcal{T}_0) \subset Y&\text{for } d\geq 2.
\end{cases}
\end{align}
Moreover, we use piece-wise constant functions to discretize the $L^2$ space
\begin{align}\label{eq:DefX_0h}
X_{0,h} \coloneqq \mathbb{T}_0(\mathcal{T})\times \mathbb{T}_0(\mathcal{T};\mathbb{R}^d) \subset L^2(Q) \times L^2(Q;\mathbb{R}^d).
\end{align}
It remains to discretize the trace space $\Gamma(\partial \mathcal{T}) \coloneqq \gamma_A U_0$. We set the spaces $H^1_D(Q) = L^2(\mathcal{I};H^1_0(\Omega)) \cap H^1(Q)$ and let $H(\textup{div}_x,Q) = L^2(\mathcal{I};H(\textup{div},\Omega))$. Then we have 
\begin{align*}
H_D^1(Q) \times H(\textup{div}_x,Q) \subset U_0.
\end{align*}
We aim for a discretization of this dense (Lemma \ref{lem:subspace}) subspace. For the first component $\gamma_A(H_D^1(Q) \times \lbrace 0 \rbrace)$ we utilize the conforming subspace 
\begin{align}\label{eq:DefQ1D}
V_h \coloneqq \mathbb{T}_1(\mathcal{T}) \cap H^1_D(Q)
\end{align}
in the sense that our discretization contains the trace space
\begin{align*}
\Gamma_h^{H^1}(\partial \mathcal{T}) \coloneqq \gamma_A (V_h \times \lbrace 0 \rbrace) \subset \gamma_A(H_D^1(Q) \times \lbrace 0 \rbrace) \subset \Gamma(\partial \mathcal{T}).
\end{align*}
%A similar design works for the discretization of the second component $\gamma_A (\lbrace 0 \rbrace \times H(\textup{div}_x,Q))$. However, we chose an alternative approach. 
An integration by parts reveals for all functions $\bfv = (0,\tau) \in H_D^1(Q) \times H(\textup{div}_x,Q)$ and $\bfw = (w,\chi) \in H(A^*,\mathcal{T})$ that
\begin{align*}
\langle \gamma_A \bfv,\bfw\rangle_{\partial \mathcal{T}}& 
=\sum_{K \in \mathcal{T}} \int_{\partial K} \tau \cdot \nu_x\, w\,\mathrm{d}s. 
\end{align*} 
Hence, it suffices to discretize the trace $\tau\cdot \nu_x$ on each coarse facet $f \in \mathcal{F}^x_c$. We set
\begin{align*}
\Gamma_h^{\textup{div}_x}(\partial \mathcal{T}) &\coloneqq
\lbrace \gamma_A (0,\tau) \mid \tau \in H(\textup{div}_x,Q)\text{ and } \tau|_f \cdot \nu_x \in \mathbb{P}_0(f)\text{ for all }f\in \mathcal{F}_c^x\rbrace\\
&\hphantom{:}\eqsim \mathbb{P}_0(\mathcal{F}_c^x).
\end{align*}
This leads to the discrete trace space 
\begin{align*}
\Gamma_h(\partial \mathcal{T})\coloneqq \Gamma_h^{H^1}(\partial \mathcal{T}) +\Gamma_h^{\textup{div}_x}(\partial \mathcal{T})\subset \Gamma(\partial \mathcal{T}).
\end{align*}
The discrete low-order ansatz space reads
\begin{align}
X_h \coloneqq X_{0,h} \times \Gamma_h(\partial \mathcal{T}) \subset X.
\end{align}
The dimension of $X_h$ equals the dimension $\dim X_{0,h} = (d+1) \# \mathcal{T}$ plus the dimension $\dim \Gamma_h(\partial \mathcal{T})  = \# \mathcal{N}(\overline{Q} \setminus \mathcal{I} \times \partial \Omega) + \# \mathcal{F}_c^x$, where $\mathcal{N}(\overline{Q} \setminus \mathcal{I} \times \partial \Omega)$ denotes the set of all nodes that are not on the lateral boundary $ \mathcal{I} \times\partial \Omega$ of $Q$. Static condensation allows to reduce the size of the resulting linear system.
\begin{remark}[Skeleton reduction/Static condensation]\label{rem:SkeletonReduction}
Since the space $X_{0,h}$ is discontinuous across the interfaces of the underlying partition $\mathcal{T}$,
one can reduce the resulting linear system of equations. The unknowns in the reduced system are the traces, that is, the dimension of the reduced linear system equals $\dim \Gamma_h (\partial\mathcal{T})$. See \cite[Sec.\ 3]{Wieners15} for further details.
\end{remark}
\subsection{Fortin Operator}\label{sec:Fortin}
In this subsection we show the existence of a uniformly bounded Fortin operator $\Pi:Y \to Y_h$ with the discrete spaces $X_h$ and $Y_h$ defined in the previous subsection. 
In particular, we prove the following theorem.
\begin{theorem}[Fortin operator $\Pi$]\label{thm:Fortin}
It exists a linear operator $\Pi:Y\to Y_h$ with
\begin{align*}
b(\bfv_h,\bft_h;(\bfw,\xi)-\Pi(\bfw,\xi)) = 0 \qquad\text{for all }(\bfv_h,\bft_h)\in X_h\text{ and }(\bfw,\xi)\in Y.
\end{align*}
The operator is uniformly bounded in the sense that there exists a constant $C < \infty$ depending solely on $Q$ and the shape regularity in spacial direction of $\mathcal{T}$ with 
\begin{align*}
\lVert \Pi \rVert \coloneqq \sup_{(\bfw,\xi)\in Y\setminus \lbrace 0 \rbrace} \frac{\lVert \Pi (\bfw,\xi)\rVert_Y }{\lVert (\bfw,\xi) \rVert_Y} \leq C \, \max_{K\in\mathcal{T}}\lbrace 1,h_t(K),h_x(K), h_t(K)/h_x(K)\rbrace.
\end{align*}
The operator decomposes into $\Pi_0:H(A^*,\mathcal{T}) \to \mathbb{T}_3(\mathcal{T}) \times \mathbb{T}_1(\mathcal{T};\mathbb{R}^d)$ and $\Pi_1:L^2(\Omega) \to \mathbb{P}_1(\mathcal{T}_0)$ in the sense that $\Pi(\bfw,\xi) = (\Pi_0\bfw,\Pi_1\xi)$ for all $(\bfw,\xi) \in Y$. 
These operators satisfy for all $K\in \mathcal{T}$, $K_0 \in \mathcal{T}_0$, and $(\bfw,\xi) \in Y$
\begin{align}\label{eq:PropModFortin}
\begin{aligned}
\langle \bfv_0, \bfw - \Pi_0 \bfw \rangle_{L^2(K)} &= 0&&\quad\text{for all } \bfv_0 \in \mathbb{T}_0(K;\mathbb{R}^{d+1}),\\
\langle \vartheta_h, \xi - \Pi_1 \xi \rangle_{L^2(K_0)} &= 0 &&\quad
\text{for all } \vartheta_h \in \mathbb{P}_1(K_0).
\end{aligned}
\end{align}
\end{theorem}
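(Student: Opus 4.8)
The plan is to construct $\Pi_0$ and $\Pi_1$ separately and then verify that the resulting operator $\Pi(\bfw,\xi)=(\Pi_0\bfw,\Pi_1\xi)$ kills the bilinear form against all discrete trial functions. The key observation is that $b(\bfv_h,\bft_h;\bfw,\xi)$ splits, via the definitions of $b$ and $X_h = X_{0,h}\times\Gamma_h(\partial\mathcal{T})$, into three kinds of terms: (i) the volume pairing $\langle \bfv_h, A_h^*(\bfw-\Pi_0\bfw)\rangle_{L^2(Q)}$ with $\bfv_h\in X_{0,h}=\mathbb{T}_0(\mathcal{T})^{d+1}$; (ii) the trace pairing $\langle \bft_h, (\bfw-\Pi_0\bfw,0)\rangle_{\partial\mathcal{T}}$ with $\bft_h$ ranging over $\Gamma_h^{H^1}(\partial\mathcal{T})=\gamma_A(V_h\times\{0\})$ and $\Gamma_h^{\textup{div}_x}(\partial\mathcal{T})$; and (iii) the initial-time pairing $\langle \bft_h,(0,\xi-\Pi_1\xi)\rangle_{\partial\mathcal{T}}=\langle\gamma_0 v_h,\xi-\Pi_1\xi\rangle_{L^2(\Omega)}$ for $v_h\in V_h$, whose trace $\gamma_0 v_h|_{K_0}$ lies in $\mathbb{P}_1(K_0)$. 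So the second property in \eqref{eq:PropModFortin} handles (iii) immediately, provided $\Pi_1$ is the local $L^2(\mathcal{T}_0)$-projection onto $\mathbb{P}_1(\mathcal{T}_0)$, and the first property in \eqref{eq:PropModFortin} — that $\bfw-\Pi_0\bfw$ is $L^2$-orthogonal to piecewise constants on each $K$ — handles (i).

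The heart of the argument is therefore the construction of $\Pi_0$ on each cell $K=K_t\times K_x$ so that, in addition to the cellwise orthogonality to $\mathbb{T}_0(K;\mathbb{R}^{d+1})$, we also get the two trace conditions: for all $\bfq\in V_h$ (equivalently, all $\tau_x\in H(\textup{div}_x,Q)$ with constant normal traces on coarse $x$-facets, and all $H^1_D$ functions), $\langle\gamma_A\bfq,(\bfw-\Pi_0\bfw,0)\rangle_{\partial\mathcal{T}}=0$. Using the integration-by-parts identity \eqref{eq:intByParts}, the $H(\textup{div}_x)$-trace part amounts to requiring that $\int_f (\chi-\Pi_0^{(2)}\chi)\cdot\nu_x\,\text{(constant)}=0$ on each coarse $x$-facet $f$ — i.e., the vector component of $\Pi_0\bfw$ must reproduce facet-averages of normal components — while the $H^1_D$-trace part (test functions $\gamma_A(v_h,0)$ with $v_h$ piecewise affine and continuous) gives moment conditions on $\textup{div}(\bfw-\Pi_0\bfw)$ and $\nabla_x(\text{scalar part})$ against gradients of affine functions, which after integration by parts translate into facet-moment conditions on the scalar component $w-\Pi_0^{(1)}w$ against affine functions on $x$-facets. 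This is why $Y_h$ uses $\mathbb{T}_3$ (or $\mathbb{T}_2$) for the scalar component and $\mathbb{T}_1$ for the vector component: enough degrees of freedom to impose cell-moments up to order $0$ plus facet-moments up to order $1$ on the $x$-facets (and the $t$-facets, coming from the $\partial_t$ part of $\textup{div}$). I would build $\Pi_0$ on a reference cell $\Tref=[0,1]\times T_\text{ref}$ by a dimension count / unisolvence argument: the prescribed functionals (cell moments against constants, facet moments against affines on lateral facets, facet moments against constants — or affines — on the top/bottom time facets) are linearly independent on $\mathbb{T}_k(\Tref)$, hence definable by a bounded linear projector there; then transport to a general cell by the affine map $K_x\to T_\text{ref}$ and the affine scaling $K_t\to[0,1]$, tracking how each functional and each norm scales.

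The main obstacle is the scaling/stability estimate, i.e., proving the stated bound on $\lVert\Pi\rVert$ with the factor $\max_K\{1,h_t,h_x,h_t/h_x\}$. The issue is that $\Pi_0$ must be simultaneously bounded with respect to $\lVert\bfw\rVert_{L^2(K)}$ and $\lVert A^*\bfw\rVert_{L^2(K)}$, but these two pieces scale differently under the anisotropic (and possibly parabolic) scaling $h_t\sim h_x^2$, and the trace functionals on $x$-facets versus $t$-facets pick up different powers of $h_t$ and $h_x$. The standard trick is to design $\Pi_0$ so that it commutes appropriately — roughly, $A_h^*\Pi_0\bfw$ depends only on $A_h^*\bfw$ (and not on $\bfw$ itself) — so the $\lVert A_h^*(\bfw-\Pi_0\bfw)\rVert_{L^2(K)}$ part is controlled by $\lVert A_h^*\bfw\rVert_{L^2(K)}$ alone via a Bramble--Hilbert / scaling argument on the reference cell, while the $\lVert\bfw-\Pi_0\bfw\rVert_{L^2(K)}$ part is controlled using a correction built from bubble-type functions whose $L^2$-mass is cheap. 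One then does the careful bookkeeping: a scaling lemma on $\Tref$ gives a constant depending only on shape regularity of $T_\text{ref}$; pulling back produces exactly the powers of $h_t(K)$ and $h_x(K)$ collected in the claimed maximum. I expect the bulk of the work to be this cell-by-cell scaling analysis, keeping track of trace-inequality constants $\lVert\cdot\rVert_{L^2(\partial K)}\lesssim h^{-1/2}\lVert\cdot\rVert_{L^2(K)}+\dots$ separately in the time and space directions, and confirming that no worse factor than $h_t/h_x$ appears; everything else (unisolvence on the reference element, the three-way splitting of $b$, and the verification of \eqref{eq:PropModFortin}) is comparatively routine.
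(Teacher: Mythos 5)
Your overall architecture is the right one and matches the paper's: build $\Pi_1$ as the cellwise $L^2$-projection onto $\mathbb{P}_1(\mathcal{T}_0)$, build $\Pi_0$ cell by cell, and check the Fortin property by splitting $b$ into the volume pairing with $X_{0,h}$, the pairings with the two trace families, and the initial-time term. However, you have swapped the roles of the two components in the facet conditions. Testing with $\gamma_A(0,\tau)$, where $\tau\cdot\nu_x$ is facetwise constant, the pairing \eqref{eq:intByParts} reduces to $\sum_{K}\int_{\partial K}\tau\cdot\nu_x\,\big(w-(\Pi_0\bfw)_t\big)\,\mathrm{d}s$, i.e.\ it constrains the \emph{scalar} test component to have vanishing constant moments on the facets of $\mathcal{F}^x(K)$ (the paper's \eqref{eq:LocFortinB}); it does not ask $\Pi_0$ to reproduce facet averages of $\chi\cdot\nu_x$. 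Conversely, the $H^1$-trace functions $\gamma_A(v_h,0)$ require, in volume form, the orthogonalities \eqref{eq:LocFortinA00}, \eqref{eq:LocFortinB2}, \eqref{eq:LocFortinD}, or equivalently in boundary form moments of $(\bfw-\Pi_0\bfw)\cdot\nu$ against $v_h$, which on the lateral facets involves $\chi\cdot\nu_x$, not $w$. A local operator designed around the conditions as you state them would not annihilate $b(\bfv_h,\bft_h;\cdot)$.

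The more serious gap is that you treat the existence of the local operator as a "comparatively routine" unisolvence/dimension count on the reference cell, imposing all cell and facet moments directly as degrees of freedom of $\mathbb{T}_2(K)\times\mathbb{T}_1(K;\mathbb{R}^d)$ (resp.\ $\mathbb{T}_3$ for $d=1$). This is precisely where the work lies, and a naive count does not settle it: one must prove surjectivity of the constraint map, and the paper does not do this by interpolation at all. Instead it constructs $\Pi_K$ in stages (a discrete right inverse of the divergence under the Piola map, addition of the cell mean, a time-averaged spatial correction, and a gradient-field correction for the vector part), obtains \eqref{eq:LocFortinC} and \eqref{eq:LocFortinE} a posteriori by integration by parts rather than imposing them, and the single genuinely unisolvence-type ingredient, Lemma \ref{lem:AuxProb} (matching facet moments against constants and volume moments against affines with a $\mathbb{P}_2$, resp.\ $\mathbb{P}_3$, polynomial), needs the nontrivial choice $\psi_\sigma=\partial^\sigma(\lambda^\sigma\lambda_0^{|\sigma|})$ to show the kernel is trivial; this is also what forces degree $3$ in $d=1$, a point your count does not detect. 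Finally, the full commuting property you invoke ($A_h^*\Pi_0\bfw$ depending only on $A_h^*\bfw$) is not available here — only the divergence part commutes in the paper's construction — and the anisotropic bookkeeping through the Piola scalings \eqref{eq:NormEquivTrans} (e.g.\ the inverse estimate producing the $h_t/h_x$ term in \eqref{eq:boundDiv3}) is exactly where the bound $C\max\{1,h_t,h_x,h_t/h_x\}$ comes from; your sketch names this obstacle but does not resolve it.
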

We design the Fortin operator 
locally on each $K\in \mathcal{T}$. The local test space reads   
\begin{align*}
Y_h(K) & \coloneqq
\begin{cases}
 \mathbb{T}_3(K) \times \mathbb{T}_1(K;\mathbb{R}^d)&\text{for }d=1,\\
 \mathbb{T}_2(K) \times \mathbb{T}_1(K;\mathbb{R}^d)&\text{for }d\geq 2.
\end{cases}
\end{align*}
Recall the notation $\bfy_t \coloneqq y_0$ and $\bfy_x \coloneqq (y_1,\dots,y_d)$ for all $\bfy = (y_0,\dots,y_d)\in \mathbb{R}^{d+1}$. Let $\mathcal{F}^x(K)$ denote the set off all facets $f\in\mathcal{F}$ on the boundary of $K\in \mathcal{T}$ with normal component $\nu_x \neq 0$.
\begin{lemma}[Local Fortin operator $\Pi_K$]\label{lem:locFortin}
Let $K= K_t \times K_x\in \mathcal{T}$. Then there exists an operator $\Pi_K:H(A^*,K) \to Y_{h}(K)$ such that for all $\bfw = (w,\chi) \in H(A^*,K)$
\begin{subequations}
\begin{align}
0 &= \langle \xi_h, \textup{div}\,(\bfw - \Pi_K\bfw)\rangle_{L^2(K)} &&\text{for all } \xi_h\in \textup{div}\, Y_h(K),\label{eq:LocFortinA00}\\
0 &= \langle \bfv_0,\bfw - \Pi_K \bfw \rangle_{L^2(K)} &&\text{for all } \bfv_0 \in \mathbb{T}_0(K;\mathbb{R}^{d+1}),\label{eq:LocFortinA0}\\
0 & = \langle p_0, w - (\Pi_K \bfw)_t\rangle_{L^2(K_t \times \partial K_x)}&&\text{for all }p_0 \in \mathbb{P}_0(\mathcal{F}^x(K)),\label{eq:LocFortinB}\\
0 & = \langle \partial_t v_h, w - (\Pi_K\bfw)_t\rangle_{L^2(K)}&&\text{for all }v_h \in \mathbb{T}_1(K),\label{eq:LocFortinB2}\\
0 &= \langle \nabla_x v_h, \chi - (\Pi_K \bfw)_x\rangle_{L^2(K)} &&\text{for all }v_h\in \mathbb{T}_1(K), \label{eq:LocFortinD}\\
0 & = \langle v_h, (\bfw- \Pi_K \bfw)\cdot\nu \rangle_{L^2(\partial K)} &&\text{for all }v_h\in \mathbb{T}_1(K),\label{eq:LocFortinC}\\
0 & = \langle \tau_0, \nabla_x (w - (\Pi_K \bfw)_t)\rangle_{L^2(K)}&&\text{for all }\tau_0 \in \mathbb{T}_0(K;\mathbb{R}^d),\label{eq:LocFortinE}\\
0 &= \langle \bfv_0,A^* (\bfw - \Pi_K \bfw) \rangle_{L^2(K)} &&\text{for all } \bfv_0 \in \mathbb{T}_0(K;\mathbb{R}^{d+1}).\label{eq:LocFortinA}
\end{align}
\end{subequations}
The operator $\Pi_K$ is bounded in the sense that there exists a constant $C < \infty$ depending solely on the shape regularity in spatial direction of $\mathcal{T}$ such that 
\begin{align*}
\lVert \Pi_K \rVert& \coloneqq \sup_{\bfw\in H(A^*,K)\setminus \lbrace 0 \rbrace} \frac{\lVert \Pi_K \bfw \rVert_{H(A^*,K)}}{\lVert \bfw \rVert_{H(A^*,K})}\\
&\hphantom{:} \leq C\,\max\lbrace 1,h_t(K),h_x(K),h_t(K) / h_x(K)\rbrace.
\end{align*}
\end{lemma}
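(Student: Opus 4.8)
The plan is to construct $\Pi_K$ on the reference cell $\Tref$ by a dimension count plus the observation that the functionals in \eqref{eq:LocFortinA00}--\eqref{eq:LocFortinA} are \emph{not} independent, and then to transport the result to a general $K=K_t\times K_x$ by an anisotropic (tensor) scaling that respects the split $\bfy = (\bfy_t,\bfy_x)$. First I would fix $K$ and count: the target space $Y_h(K)$ has dimension $\dim\mathbb{T}_k(K)+(d)\dim\mathbb{T}_1(K)$ with $k=3$ ($d=1$) or $k=2$ ($d\ge2$), and the conditions \eqref{eq:LocFortinA00}--\eqref{eq:LocFortinA} prescribe the images of $\Pi_K\bfw$ against a fixed list of test functionals. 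The key point is that several of these families overlap or are redundant: \eqref{eq:LocFortinA} is $A^*$ applied to the first-moment condition and is implied (on $\mathbb{T}_0$) by \eqref{eq:LocFortinB2}, \eqref{eq:LocFortinD}, \eqref{eq:LocFortinC}, \eqref{eq:LocFortinE} via the elementwise integration-by-parts identity \eqref{eq:intByParts}; likewise \eqref{eq:LocFortinA00} with $\xi_h\in\operatorname{div}Y_h(K)$ is controlled by \eqref{eq:LocFortinB2} together with \eqref{eq:LocFortinC} and the divergence identity. So the effective number of independent constraints does not exceed $\dim Y_h(K)$, and one shows solvability of the resulting square (or over-determined-but-consistent) linear system by checking that the only $\bfw_h\in Y_h(K)$ annihilating all the \emph{homogeneous} functionals is $\bfw_h=0$. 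That injectivity check is a finite-dimensional polynomial computation: if all first moments of $\bfw_h$ vanish (\eqref{eq:LocFortinA0}) and the facet moments $\langle p_0,(\bfw_h)_t\rangle$ and $\langle v_h,\bfw_h\cdot\nu\rangle$ vanish while $\langle\partial_t v_h,(\bfw_h)_t\rangle=\langle\nabla_x v_h,(\bfw_h)_x\rangle=0$, an integration by parts forces $\langle v_h,\operatorname{div}\bfw_h\rangle=0$ for all $v_h\in\mathbb{T}_1(K)$, and combined with the vanishing first moment of $\bfw_h$ itself this pins down enough of the low-order part; the extra polynomial degree in the first component ($\mathbb{T}_3$ resp.\ $\mathbb{T}_2$) is exactly what is needed for the facet conditions \eqref{eq:LocFortinB} and the $\nabla_x$-condition \eqref{eq:LocFortinE} to be imposable, and one verifies $\bfw_h=0$ by separating the tensor variables.

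Next I would make $\Pi_K$ explicit and linear: having a right inverse of the (surjective onto its range) functional map, define $\Pi_K\bfw$ as the unique element of $Y_h(K)$ realizing the prescribed moments of $\bfw$, which is automatically linear in $\bfw$; property \eqref{eq:PropModFortin} for $\Pi_0$ is then just \eqref{eq:LocFortinA0}. The boundedness estimate is obtained by the standard Bramble--Hilbert / scaling argument, but run on the \emph{anisotropic} reference configuration $\widehat K = (0,1)\times \widehat K_x$ with the affine map $F_K(\hat t,\hat x) = (t_0 + h_t(K)\hat t,\; B_K\hat x + b_K)$, $\|B_K\|\sim h_x(K)$, $\|B_K^{-1}\|\sim h_x(K)^{-1}$ by spatial shape regularity. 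On $\widehat K$ the operator $\widehat\Pi$ is a fixed bounded linear map between finite-dimensional spaces, so $\|\widehat\Pi\|\le C$ with $C$ depending only on $d$ and the shape-regularity constant. Pulling back, each term of $\|\Pi_K\bfw\|_{H(A^*,K)}^2 = \|\Pi_K\bfw\|_{L^2(K)}^2 + \|A^*\Pi_K\bfw\|_{L^2(K)}^2$ picks up powers of $h_t(K)$ and $h_x(K)$: the $L^2(K)$ part is scale-invariant up to $|K|^{1/2}$ which cancels against the same factor on the right, while the $A^*$ part — which contains $\partial_t$, $\operatorname{div}_x$, $\nabla_x$ — produces factors $h_t(K)^{-1}$ and $h_x(K)^{-1}$ acting on data that are only controlled in the coarser norms, and tracking these through gives exactly the claimed $\max\{1,h_t,h_x,h_t/h_x\}$; the anisotropy $h_t/h_x$ enters precisely because $\Pi_K$ must reproduce first moments of $\bfw$ (not of $A^*\bfw$), so $A^*\Pi_K\bfw$ is compared to $A^*\bfw$ only after an inverse estimate in the corresponding direction.

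The main obstacle I expect is the injectivity/consistency step on the reference cell, i.e.\ verifying that the constraint list \eqref{eq:LocFortinA00}--\eqref{eq:LocFortinA} is solvable in $Y_h(K)$ with a \emph{linear} solution operator. Two subtleties must be handled: first, there are more listed functionals than $\dim Y_h(K)$, so one has to identify the redundancies explicitly (using \eqref{eq:intByParts} and $\operatorname{div}\mathbb{T}_1(K)\subset\mathbb{T}_0\oplus(\text{first-degree terms})$) and argue that the remaining independent functionals form a basis of $Y_h(K)^*$; second, the correct polynomial degrees — $\mathbb{T}_3$ in the scalar component when $d=1$ versus $\mathbb{T}_2$ when $d\ge2$ — are dictated by this count, and the verification must be done degree-by-degree in the tensor variables, since $\mathbb{T}_k(K)=\mathbb{P}_k(K_t)\otimes\mathbb{P}_k(K_x)$ is genuinely a tensor-product space and the facet functionals on $K_t\times\partial K_x$ mix the two. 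Once the homogeneous system has only the trivial solution on $\widehat K$, everything else (linearity, \eqref{eq:PropModFortin}, and the scaled bound) follows by the routine transformations sketched above.
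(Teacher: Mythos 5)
Your central existence argument has a genuine gap. You propose to define $\Pi_K\bfw$ as ``the unique element of $Y_h(K)$ realizing the prescribed moments'' and to establish solvability ``by checking that the only $\bfw_h\in Y_h(K)$ annihilating all the homogeneous functionals is $\bfw_h=0$.'' Both halves of this are wrong for the present system. Counting for $d=2$: $\dim Y_h(K)=\dim\mathbb{T}_2(K)+2\dim\mathbb{T}_1(K)=18+12=30$, while the non-redundant constraints \eqref{eq:LocFortinA00}--\eqref{eq:LocFortinD} number at most $12+3+3+3+4=25$ (and \eqref{eq:LocFortinC}, \eqref{eq:LocFortinE}, \eqref{eq:LocFortinA} are consequences of these). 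So the system is strictly underdetermined: the homogeneous kernel in $Y_h(K)$ is nontrivial, the injectivity check you rely on fails, and ``the unique element matching the moments'' does not exist. What is actually needed is the opposite property: linear independence of the constraint functionals as elements of $Y_h(K)^*$ (equivalently, surjectivity of the moment map on $Y_h(K)$), followed by the choice of a bounded linear right inverse (e.g.\ minimal-norm). Moreover, even with that repair, your sketch never engages the one genuinely delicate point, namely that the facet averages \eqref{eq:LocFortinB} and the $\mathbb{P}_1$-volume moments can be matched simultaneously by a polynomial of degree $k=2$ ($d\ge2$) resp.\ $k=3$ ($d=1$) on the spatial simplex; this is exactly the content of Lemma~\ref{lem:AuxProb}, whose proof requires the special test functions $\psi_\sigma=\partial^\sigma(\lambda^\sigma\lambda_0^{|\sigma|})$, and it is not obtainable by ``separating the tensor variables'' or by a dimension count alone (there the moment system \emph{is} square and its unisolvence is the nontrivial fact fixing the degrees).

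Two further points. The paper does not argue by global unisolvence at all: it constructs $\Pi_K\bfw$ by successive corrections (a right inverse of $\operatorname{div}$ on $Y_h(\hat K)$ composed with an $L^2$ projection for \eqref{eq:LocFortinA00}, the cell average for \eqref{eq:LocFortinA0}, a time-averaged application of the operator from Lemma~\ref{lem:AuxProb} for \eqref{eq:LocFortinB}--\eqref{eq:LocFortinB2}, and an $L^2$ projection onto $\nabla_x\mathbb{T}_1(K)$ for \eqref{eq:LocFortinD}), each step preserving the previously imposed conditions, with \eqref{eq:LocFortinC}, \eqref{eq:LocFortinE}, \eqref{eq:LocFortinA} then following by integration by parts; the bound $\max\{1,h_t,h_x,h_t/h_x\}$ drops out of the norm equivalences \eqref{eq:NormEquivTrans}. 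Finally, in your scaling step note that a plain componentwise anisotropic pullback does not leave the space-time divergence condition invariant, so ``a fixed bounded operator on the reference cell'' is not automatic; the paper uses the Piola transformation \eqref{eq:Transformation}, under which $\operatorname{div}\bfr=J^{-1}(\operatorname{div}\hat\bfr)\circ F^{-1}$, precisely to make the reference construction aspect-ratio independent while the anisotropy reappears only through the norm equivalences. Your proposal could likely be completed along the lines you indicate, but only after replacing the injectivity criterion by surjectivity of the constraint functionals and supplying the simplex moment lemma, which together constitute the actual work of the proof.
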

Before we design the local Fortin operator, we introduce the Piola transformation and an auxiliary result.
Let $\hat{K} = \hat{K}_t \times \hat{K}_x\subset \mathbb{R}^{d+1}$ be the reference element, where $\hat{K}_t = [0,1]$ denotes the unit interval and $\hat{K}_x\subset \mathbb{R}^d$ denotes the unit simplex. Let $F = (F_0,\dots,F_d):\hat{K} \to K\in \mathcal{T}$ be the affine mapping from the reference element $\hat{K} = \hat{K}_t \times \hat{K}_x$ onto $K = K_t \times K_x$ such that $F_0:\hat{K}_t \to K_t$ and $(F_1,\dots,F_d):\hat{K}_x \to K_x$. Set $F_t \coloneqq F_0$.
The Jacobian matrix $DF \in \mathbb{R}^{(d+1)\times (d+1)}$ decomposes into the blocks $\partial_t F_t \in \mathbb{R}$ and $D_x F_x \in \mathbb{R}^{d\times d}$ in the sense that
\begin{align}\label{eq:BlockDiag}
DF = \begin{pmatrix}
\partial_t F_t & 0\\
0 & D_xF_x
\end{pmatrix}.
\end{align}
Let $J \coloneqq \lvert\det\, (DF)\rvert$ denote the determinant of $DF$.
Given $\bfr = (r,\zeta) \in H(A^*,K)$, the Piola transformation $\hat{\bfr} = (\hat{r},\hat{\zeta}) \in H(A^*,\hat{K})$ satisfies $\bfr = J^{-1} DF \hat{\bfr} \circ F^{-1}$ and splits due to the block diagonal structure \eqref{eq:BlockDiag} into 
\begin{align}\label{eq:Transformation}
r = J^{-1} \partial_t F_t\, \hat{r} \circ F^{-1}\qquad\text{and}\qquad \zeta = J^{-1} D_xF_x \hat{\zeta} \circ F^{-1}.
\end{align} 
We have 
\begin{align*}
\nabla r = J^{-1} \partial_tF_t\big((DF)^{-1} \nabla \hat{r}\big) \circ F^{-1}\qquad\text{and}\qquad \textup{div}\, \bfr = J^{-1}( \textup{div}\, \hat{\bfr})\circ F^{-1}.
\end{align*}
The shape regularity in space implies $J \eqsim h_th_x^d\eqsim \lvert K \rvert$ and the norm equivalences
\begin{align}\label{eq:NormEquivTrans}
\begin{aligned}
&\lVert r \rVert_{L^2(K)}^2  \eqsim h_t h_x^{-d}\lVert \hat{r}\rVert_{L^2(\hat{K})}^2,
\qquad \lVert \zeta \rVert^2_{L^2(K)} \eqsim h_t^{-1}h_x^{2-d} \lVert \hat{\zeta} \rVert_{L^2(\hat{K})}^2,\\
&\lVert \textup{div}\, \bfr \rVert^2_{L^2(K)} = h_t^{-1} h_x^{-d} \, \lVert \textup{div}\, \hat{\bfr} \rVert^2_{L^2(\hat{K})},\\
&\lVert \partial_{x_j} r \rVert^2_{L^2(K)} \eqsim  h_t h_x^{-d-2}\, \lVert \partial_{x_j} \hat{r} \rVert_{L^2(\hat{K})}^2\qquad\text{ for all }j=1,\dots,d.
\end{aligned}
\end{align} 
Besides the Piola transformation, the design of $\Pi_K$ utilizes the following result.
\begin{lemma}[Auxiliary result]\label{lem:AuxProb}
Let $K = K_t\times K_x\in \mathcal{T}$ with diameter in space $h_x \coloneqq \textup{diam}(K_x)$ and faces $\mathcal{F}(K_x)$.
There exists a linear mapping $\mathcal{P}:H^1(K_x)\to \mathbb{P}_k({K}_x)$ with $k=2$ for $d\geq 2$ and $k=3$ for $d=1$ such that for all $(p_0, \xi_h) \in \mathbb{P}_0(\mathcal{F}(K_x))\times \mathbb{P}_1({K}_x)$ and $w \in H^1(K_x)$
\begin{align}\label{eq:Proofw2tnew}
\int_{\partial K_x} p_0\, {\mathcal{P}} w\,\mathrm{d}s +  \int_{K_x} \xi_h \,{\mathcal{P}}w\,\mathrm{d}x = \int_{\partial K_x} p_0\, w\,\mathrm{d}s +  \int_{K_x} \xi_h \,w\,\mathrm{d}x.
\end{align}
The operator is bounded in the sense that there exits a constant $C < \infty$ depending solely on the shape regularity in space such that 
\begin{align}\label{eq:BoundP}
 \lVert \mathcal{P} w \rVert_{L^2(K_x)}^2 + h_x^2 \lVert \nabla_x \mathcal{P} w\rVert_{L^2(K_x)}^2\leq C \left( \lVert w \rVert_{L^2(K_x)}^2 + h_x^2 \lVert \nabla_x w\rVert_{L^2(K_x)}^2 \right).
\end{align}
\end{lemma}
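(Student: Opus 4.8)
The plan is to build $\mathcal P$ by transporting a single operator from the reference simplex, so that only one finite-dimensional problem has to be solved. Fix the affine map $F_x\colon \hat K_x\to K_x$ from the unit simplex onto $K_x$ and set $\hat w\coloneqq w\circ F_x$. Since $F_x$ maps faces bijectively onto faces, the pull-back $p_0\mapsto p_0\circ F_x$ is a bijection of $\mathbb P_0(\mathcal F(K_x))$ onto $\mathbb P_0(\mathcal F(\hat K_x))$, and $\xi_h\mapsto\xi_h\circ F_x$ is a bijection of $\mathbb P_1(K_x)$ onto $\mathbb P_1(\hat K_x)$. Using $\int_f g\,\mathrm ds=(|f|/|\hat f|)\int_{\hat f}\hat g\,\mathrm d\hat s$ and $\int_{K_x} g\,\mathrm dx=|\det DF_x|\int_{\hat K_x}\hat g\,\mathrm d\hat x$, the identity \eqref{eq:Proofw2tnew} for $\mathcal Pw\coloneqq(\hat{\mathcal P}\hat w)\circ F_x^{-1}$ is equivalent to the analogous identity on $\hat K_x$ for a linear operator $\hat{\mathcal P}\colon H^1(\hat K_x)\to\mathbb P_k(\hat K_x)$. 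Hence it suffices to produce such an $\hat{\mathcal P}$ that is bounded $H^1(\hat K_x)\to H^1(\hat K_x)$: shape regularity in space gives $\lVert\mathcal Pw\rVert_{L^2(K_x)}^2+h_x^2\lVert\nabla_x\mathcal Pw\rVert_{L^2(K_x)}^2\eqsim|\det DF_x|\,\lVert\hat{\mathcal P}\hat w\rVert_{H^1(\hat K_x)}^2$ and the same equivalence with $w$ in place of $\mathcal Pw$, with constants depending only on the shape regularity, so \eqref{eq:BoundP} follows.

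On the reference simplex I would construct $\hat{\mathcal P}$ from a dual basis. Collect the constraints into functionals $\hat\phi_1,\dots,\hat\phi_N$ on $H^1(\hat K_x)$, namely the $d+1$ face integrals $\hat v\mapsto\int_{\hat f}\hat v\,\mathrm d\hat s$ and the $d+1$ moments $\hat v\mapsto\int_{\hat K_x}\hat\xi_j\hat v\,\mathrm d\hat x$ against a basis $\hat\xi_1,\dots,\hat\xi_{d+1}$ of $\mathbb P_1(\hat K_x)$, so $N=2(d+1)$; each $\hat\phi_i$ is bounded on $H^1(\hat K_x)$ (the moments by Cauchy--Schwarz, the face integrals by the trace inequality $\lVert\hat v\rVert_{L^2(\partial\hat K_x)}\lesssim\lVert\hat v\rVert_{H^1(\hat K_x)}$). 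Let $\Phi\colon H^1(\hat K_x)\to\mathbb R^N$, $\hat v\mapsto(\hat\phi_i(\hat v))_{i=1}^N$. The key claim is that the restriction $\Phi|_{\mathbb P_k(\hat K_x)}$ is surjective, i.e.\ the $\hat\phi_i$ are linearly independent on $\mathbb P_k(\hat K_x)$; note $\dim\mathbb P_k(\hat K_x)=N$ for $d\in\{1,2\}$ and $\dim\mathbb P_k(\hat K_x)>N$ for $d\ge3$. Granting the claim, pick any subspace $S\subset\mathbb P_k(\hat K_x)$ with $\dim S=N$ on which $\Phi$ restricts to an isomorphism, and set $\hat{\mathcal P}\hat w\coloneqq(\Phi|_S)^{-1}(\Phi\hat w)\in S$. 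Then $\hat\phi_i(\hat{\mathcal P}\hat w)=\hat\phi_i(\hat w)$ for all $i$, which is precisely \eqref{eq:Proofw2tnew} on $\hat K_x$, and $\hat{\mathcal P}$ is linear and bounded as the composition of the bounded map $\Phi$ with the finite-dimensional (hence bounded) map $(\Phi|_S)^{-1}\colon\mathbb R^N\to S\subset\mathbb P_k(\hat K_x)$.

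It remains to verify the key claim, and this is the one step with real content. Suppose $\sum_j c_j\int_{\hat f_j}\hat v\,\mathrm d\hat s+\int_{\hat K_x}\hat q\,\hat v\,\mathrm d\hat x=0$ for all $\hat v\in\mathbb P_k(\hat K_x)$, with $\hat q\in\mathbb P_1(\hat K_x)$. I would test against the barycentric monomials spanning $\mathbb P_k(\hat K_x)$ and evaluate everything with the exact formula $\int_{\hat K_x}\lambda_0^{\alpha_0}\cdots\lambda_d^{\alpha_d}\,\mathrm d\hat x=d!\,|\hat K_x|\,\alpha_0!\cdots\alpha_d!/(\alpha_0+\cdots+\alpha_d+d)!$ and its face analogue. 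For $d=1$ ($k=3$, testing against $1,x,x^2,x^3$) and $d=2$ ($k=2$, testing against $\{\lambda_i^2,\lambda_i\lambda_j\}$) this reduces, after exploiting the permutation symmetry of the barycentric coordinates to deduce the remaining equations from one of them, to a small homogeneous linear system whose only solution is $c_j=0$ and $\hat q=0$ (for instance, for $d=2$ the edge equations give $c_{ij}=-2(a_i+a_j)$, and a moment equation then forces $a_1+2a_2+2a_3=0$ and its cyclic images, hence all $a_i=0$); for $d\ge3$ the same computation shows that the $2(d+1)$ functionals remain linearly independent on $\mathbb P_2(\hat K_x)$. \emph{This unisolvence/independence on the reference element is the main obstacle}; the bookkeeping of the barycentric integrals—and being careful with the symmetry argument when $d\ge3$, where the two types of faces of the unit simplex carry different measures—is the only delicate point. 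Everything else, namely the passage between $K_x$ and $\hat K_x$ and the boundedness of $\hat{\mathcal P}$, is routine.
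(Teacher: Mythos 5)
Your overall architecture is the same as the paper's: pull everything back to the reference simplex by the affine map, observe that \eqref{eq:Proofw2tnew} is invariant under this pull-back because the per-face and volume Jacobian factors can be absorbed into $p_0$ and $\xi_h$, reduce existence to a finite-dimensional unisolvence statement (your ``key claim'', which is exactly the injectivity of the adjoint system \eqref{eq:RankKern} in the paper), get boundedness for free from finite dimensionality plus the trace inequality, and recover \eqref{eq:BoundP} by the scaling $\lVert \mathcal{P}w\rVert_{L^2(K_x)}^2+h_x^2\lVert\nabla_x\mathcal{P}w\rVert_{L^2(K_x)}^2\eqsim|\det DF_x|\,\lVert\hat{\mathcal{P}}\hat w\rVert_{H^1(\hat K_x)}^2$. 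All of that is correct and is essentially the paper's Step 2 plus the framing of Step 1.

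The gap is in the one step you yourself flag as the real content. For $d\ge 3$ you give no argument at all: ``the same computation shows that the $2(d+1)$ functionals remain linearly independent on $\mathbb{P}_2(\hat K_x)$'' is an assertion, not a proof, and the lemma is needed for every space dimension $d$. A brute-force evaluation against barycentric monomials is genuinely dimension-dependent (the system has $2(d+1)$ unknowns and $\binom{d+2}{2}$ equations, and the faces of the unit simplex are not all congruent, so the naive symmetry reduction you invoke needs justification); one needs a device that works uniformly in $d$. This is precisely what the paper supplies: the test functions $\psi_\sigma=\partial^\sigma(\lambda^\sigma\lambda_0^{|\sigma|})$ with $|\sigma|=2$, which (i) are $L^2(\hat K_x)$-orthogonal to all of $\mathbb{P}_1(\hat K_x)$ by integrating by parts, (ii) have vanishing mean on every face $f_j$ with $j\ge 1$, and (iii) restrict to $2\lambda^\sigma\ge 0$, not identically zero, on $f_0$; testing \eqref{eq:RankKern} with $\psi_\sigma$ then forces $q_0|_{f_0}=0$, relabelling the barycentric coordinates gives $q_0=0$ on all faces, and then $\vartheta_h=0$ follows. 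Without such an argument your proof covers at most $d=1,2$ (and even there the intermediate relations you quote for $d=2$ do not match the barycentric integral formulas: the moment equation obtained from $\lambda_i\lambda_j$ couples all three face constants and all three coefficients of $\vartheta_h$, so the explicit elimination needs to be redone carefully), so as written the lemma is not established in the generality in which it is used.
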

\begin{proof}
\textit{Step 1 (Unit simplex)}.
Let $K_x = \hat{K}_x$ be the unit simplex with barycentric coordinates $\lambda_j(x) = x_j$ and $\lambda_0(x) = 1-\sum_{j=1}^d x_j$ for all $x=(x_1,\dots,x_d)$ and $j=1,\dots,d$.
Since one can easily prove the existence of an operator $\mathcal{P} = \hat{\mathcal{P}}$ by evaluating the linear problem in \eqref{eq:Proofw2tnew} for $d=1$ by hand, we focus on the case $d\geq 2$.
To prove the existence, we show that the rank of the linear system related to \eqref{eq:Proofw2tnew} equals $\dim (\mathbb{P}_0(\mathcal{F}(\hat{K}_x))\times \mathbb{P}_1(\hat{K}_x))$. In other words, we show that there exists only the trivial solution to the problem: Seek $(q_0,\vartheta_h) \in \mathbb{P}_0(\mathcal{F}(\hat{K}_x)) \times \mathbb{P}_1(\hat{K}_x)$ with 
\begin{align}\label{eq:RankKern}
0 = \int_{\partial \hat{K}_x} q_0\,r_h \,\mathrm{d}s + \int_{\hat{K}_x} \vartheta_h \,r_h\,\mathrm{d}x \qquad\text{for all }r_h\in \mathbb{P}_k(\hat{K}_x).
\end{align}
Suppose $(q_0,\vartheta_h) \in \mathbb{P}_0(\mathcal{F}(\hat{K}_x)) \times \mathbb{P}_1(\hat{K}_x)$ solves \eqref{eq:RankKern}. 
Let $\lambda = (\lambda_1,\dots,\lambda_d)$ be the vector with all barycentric coordinates except $\lambda_0$. We fix a multi index $\sigma = (\sigma_1,\dots,\sigma_d) \in \lbrace 0 ,1\rbrace^d$ with $|\sigma| = 2$ and set the function $\psi_\sigma \coloneqq \partial^\sigma (\lambda^\sigma\lambda_0^{|\sigma|}) \in \mathbb{P}_2(\hat{K}_x)$. Integration by parts reveals
\begin{align}\label{eq:IntByParts}
\int_{\hat{K}_x} \vartheta_h\, \psi_\sigma\,\mathrm{d}x = \int_{\hat{K}_x} \vartheta_h\, \partial^\sigma (\lambda^\sigma\lambda_0^{|\sigma|})\,\mathrm{d}x = \int_{\hat{K}_x} (\partial^\sigma \vartheta_h)\, \lambda^\sigma\lambda_0^{|\sigma|}\,\mathrm{d}x = 0.
\end{align}
Let $f_j \coloneqq \lbrace x \in \hat{K}_x\mid \lambda_j(x) = 0\rbrace \in \mathcal{F}(\hat{K}_x)$ for all $j=0,\dots,d$.
Since for all multi-indices $\beta \in \lbrace 0 ,1\rbrace^d$ with $\beta \leq \sigma$ (component-wise) and $|\beta| < |\sigma|$ the function $\partial^{\beta} \lambda_0^{|\sigma|}$ equals zero on $f_0$, a calculation shows
\begin{align}\label{eq:sadfsfgsr}
\begin{aligned}
  \psi_\sigma|_{f_0}
  &= \big(\partial^\sigma (\lambda^\sigma \lambda_0^{\abs{\sigma}})\big)\big|_{f_0}= \sum_{\beta \leq \sigma} \binom{\sigma}{\beta} (\partial^{\sigma-\beta} \lambda^\sigma) (\partial^\beta \lambda_0^{\abs{\sigma}})\big|_{f_0}
  \\
  &= \lambda^\sigma (\partial^\sigma \lambda_0^{\abs{\sigma}})\big|_{f_0}= \lambda^\sigma \abs{\sigma}! (-1)^{\abs{\sigma}} \big|_{f_0} = 2 \lambda^\sigma\big|_{f_0}.
\end{aligned}
\end{align}
Let $j=1,\dots,d$ and $\sigma\in \lbrace 0 ,1\rbrace^d$ with $|\sigma| = 2$ and component $\sigma_j = 1$. Set the multi index $\gamma \in \lbrace 0,1\rbrace^d$ with entries $\gamma_k = \sigma_k$ for $k\neq j$ and $\gamma_j = 0$. A calculation shows
\begin{align*}
  \psi_\sigma|_{f_j}
  &= \big(\partial^\sigma (\lambda^\sigma \lambda_0^{\abs{\sigma}})\big)\big|_{f_j}= \big(\partial^\gamma  \partial_j^{\sigma_j} (\lambda^\gamma \lambda_j^{\sigma_j} \lambda_0^{\abs{\sigma}})\big)\big|_{f_j} 
  = \big(\partial^\gamma  (\lambda^\gamma (\partial_j^{\sigma_j} \lambda_j^{\sigma_j}) \lambda_0^{\abs{\sigma}})\big)\big|_{f_j}\\
  & = \big(\partial^\gamma  (\lambda^\gamma (\sigma_j!) \lambda_0^{\abs{\sigma}})\big)\big|_{f_j} = \sigma_j! \big(\partial^\gamma  (\lambda^\gamma \lambda_0^{\abs{\gamma}+\sigma_j})\big)\big|_{f_j}.
  % = \sigma_j! \big(\partial^\gamma  (\lambda^\gamma \bar{\lambda}_0^{\abs{\gamma}+\sigma_j})\big)\big|_{f_j}.
  \end{align*}
This identity and similar arguments as in \eqref{eq:IntByParts} show the orthogonality 
\begin{align}\label{eq:ProofTemssad}
\int_{f_j} \psi_\sigma \,q_0\,\mathrm{d}s = 0\qquad\text{for all }j=1,\dots,d.
\end{align}
Combining  \eqref{eq:RankKern}, \eqref{eq:sadfsfgsr}, and \eqref{eq:ProofTemssad} results for $r_h \coloneqq \psi_\sigma \in \mathbb{P}_2(\hat{K}_x)$ in 
\begin{align*}
0 =  \int_{\partial \hat{K}_x} q_0\,r_h \,\mathrm{d}s + \int_{\hat{K}_x} \vartheta_h \,r_h\,\mathrm{d}x = 2\int_{f_0}q_0\, \lambda^\sigma\, \mathrm{d}s.
\end{align*}
Since the integral $\int_{f_0} \lambda^\sigma\,\mathrm{d}s >0$ is not zero, we have $q_0|_{f_0} = 0$. By symmetry the same arguments show $q_0|_{f_j} = 0$ for all $j=0,\dots,d$, that is, $q_0 = 0$. Applying this identity in \eqref{eq:RankKern} shows $\vartheta_h = 0$. This proves the existence of an operator $\hat{\mathcal{P}}$ with \eqref{eq:Proofw2tnew}. 
Since $\hat{\mathcal{P}}$ maps into a finite dimensional space, the linear mapping is bounded, that is, for all $\hat{w}\in H^1(\hat{K}_x)$
\begin{align*}
\lVert \hat{\mathcal{P}} \hat{w}\rVert_{L^2(K_x)}^2 + \lVert \nabla_x \hat{\mathcal{P}} \hat{w}\rVert_{L^2(K_x)}^2 \lesssim \lVert  \hat{w}\rVert_{L^2(K_x)}^2 + \lVert \nabla_x \hat{w}\rVert_{L^2(K_x)}^2.
\end{align*}
\textit{Step 2 (Transformation).}
Let $K = K_t \times K_x \in \mathcal{T}$ with affine mapping $F_x:\hat{K}_x \to K_x$ onto $K_x$. Combining the existence of the bounded operator $\hat{\mathcal{P}}:H^1(\hat{K}_x) \to \mathbb{P}_k(\hat{K}_x)$ with the affine transformation $w \mapsto \hat{w} \circ F_x^{-1}$ verifies the existence of a linear operator $\mathcal{P}:H^1(K_x) \to \mathbb{P}_k(K_x)$ with \eqref{eq:Proofw2tnew} and \eqref{eq:BoundP}.
\end{proof}
\begin{proof}[Proof of Lemma \ref{lem:locFortin}]
Let $K = K_t \times K_x \in \mathcal{T}$ and
set the mesh sizes $h_t \coloneqq h_t(K) = |K_t|$ and $h_x \coloneqq h_x(K) = \textup{diam}(K_x)$.
Given a fixed test function $\bfw = (w,\chi) \in H(A^*,K)$, we design the function $\Pi_K \bfw \in Y_h(K)$ in the following four steps.

\textit{Step 1 (Divergence correction $\bfw_1$).}
For all $\hat{\xi}_h \in \textup{div} \, Y_h(\hat{K})$ we define the right-inverse of the divergence operator $\textup{div}_h^{-1}:\textup{div}\, Y_h(\hat{K}) \to Y_h(\hat{K})$ by
\begin{align*}
\textup{div}_h^{-1}\, \hat{\xi}_h = \argmin\big\lbrace \lVert \hat{\bfr}_h \rVert_{L^2(\hat{K})} \mid \hat{\bfr}_h \in Y_h(\hat{K})\text{ and }
\textup{div}\, \hat{\bfr}_h = \hat{\xi}_h\big\rbrace.
\end{align*}
Notice that $\int_K\textup{div}_h^{-1}\, \hat{\xi}_h \,\mathrm{d}z = 0$ for all $\hat{\xi}_h \in \textup{div}\, Y_h(\hat{K})$.
Let $\hat{\bfw} \in H(A^*,\hat{K})$ denote the Piola transformation of $\bfw\in H(A^*,K)$ and 
let the operators $\mathcal{P}_{\textup{div}\, Y_h(\hat{K})}:L^2(\hat{K}) \to \textup{div}\, Y_h(\hat{K})$ and $\mathcal{P}_{\textup{div}\, Y_h(K)} :L^2(K) \to \textup{div}\, Y_h(K)$ denote the $L^2$-orthogonal projectors onto the spaces $\textup{div}\, Y_h(\hat{K})$ and $\textup{div}\, Y_h(K)$.
We set the function  
\begin{align*}
\hat{\bfw}_1 \coloneqq \textup{div}_h^{-1} \mathcal{P}_{\textup{div}\, Y_h(\hat{K})} \textup{div}\, \hat{\bfw}.
\end{align*}
The transformed function $\bfw_1 = (w_1,\chi_1)\in H(A^*,K)$ is bounded by
\begin{align}\label{eq:Bouasdaf}
\begin{aligned}
&h_t^{-1} h_x^{d}\lVert w_1 \rVert^2_{L^2(K)} + h_t h_x^{d-2} \lVert \chi_1 \rVert^2_{L^2(K)} \eqsim \lVert \hat{\bfw}_1 \rVert_{L^2(\hat{K})}^2\\
&\qquad\qquad\qquad\qquad \lesssim \lVert \textup{div}\,\hat{\bfw}\rVert_{L^2(\hat{K})}^2  \eqsim h_t h^{d}_x \lVert\textup{div}\,\bfw\rVert_{L^2(K)}^2.  
\end{aligned}
\end{align}
Moreover, it satisfies $\textup{div}\, \bfw_1 = \mathcal{P}_{\textup{div}\, Y_h(K)} \textup{div}\, \bfw$, that is
\begin{align}\label{eq:sadasdgt}
\langle \textup{div}\, (\bfw - \bfw_1), \xi_h \rangle_{L^2(K)} = 0\qquad\text{for all }\xi_h \in \textup{div}\, Y_h(K).
\end{align}
The $L^2$-orthogonal projection property \eqref{eq:sadasdgt} implies 
\begin{align}\label{eq:sadasdgt2}
\lVert \textup{div}\, \bfw_1 \rVert_{L^2(K)} \leq \lVert \textup{div}\, \bfw \rVert_{L^2(K)}.
\end{align}
The norm equivalences \eqref{eq:NormEquivTrans} yield
\begin{align}\label{eq:boundDiv3}
\begin{aligned}
&\lVert \nabla_x w_1 \rVert^2_{L^2(K)} \eqsim h_t h_x^{-d-2}  \lVert \nabla_x \hat{w}_1 \rVert^2_{L^2(\hat{K})}\\
& \quad \qquad \lesssim  h_t h_x^{-d-2} \lVert \hat{w}_1 \rVert^2_{L^2(\hat{K})} \eqsim h_x^{-2}\, \lVert w_1 \rVert^2_{L^2(K)} \lesssim h_t^2 h_x^{-2} \lVert \textup{div}\, \bfw \rVert^2_{L^2({K})}.
\end{aligned}
\end{align}
Combining \eqref{eq:Bouasdaf}--\eqref{eq:boundDiv3} shows \eqref{eq:LocFortinA00} for $\bfw_1$ and the upper bound
\begin{align}\label{eq:w0Bound}
\begin{aligned}
\lVert \bfw_1 \rVert_{H(A^*,K)} &\lesssim (1+ h_x + h_t + h_t/h_x) \lVert \textup{div}\, \bfw\rVert_{L^2(K)} \\
&\leq (1+ h_x + h_t + h_t/h_x) \lVert \bfw \rVert_{H(A^*,K)}.
\end{aligned}
\end{align}
 
\textit{Step 2 (Integral mean correction $\bfw_2$)}.
In order to satisfy \eqref{eq:LocFortinA0}, we add the integral mean $\langle \bfw \rangle_K \coloneqq |K|^{-1} \int_K \bfw\,\mathrm{d}z$ to $\bfw_1$. Since $\langle \bfw_1 \rangle_K = 0$, the function $\bfw_2 \coloneqq \bfw_1 + \langle \bfw \rangle_K$ satisfies
\begin{align}
\langle \bfv_0, \bfw - \bfw_2 \rangle_{L^2(K)} = 
\langle \bfv_0, \bfw - \langle \bfw\rangle_K \rangle_{L^2(K)} = 0\qquad\text{for all }\bfv_0 \in \mathbb{P}_0(K;\mathbb{R}^{d+1}).
\end{align}
This shows \eqref{eq:LocFortinA00}--\eqref{eq:LocFortinA0} for $\bfw_2$. The triangle inequality and \eqref{eq:w0Bound} yield boundedness
\begin{align}\label{eq:Boundw1}
\lVert \bfw_2 \rVert_{H(A^*,K)} \lesssim (1+h_x+h_t +h_t/h_x) \lVert \bfw \rVert_{H(A^*,K)}.
\end{align}

\textit{Step 3 (Averaged  $\bfw_3$).}
Recall the bounded linear operator ${\mathcal{P}}:H^1(K_x) \to \mathbb{P}_k(K_x)$ with $k=3$ for $d=1$ and $k=2$ for $d\geq 2$ in Lemma \ref{lem:AuxProb}. Let $\bfw_2 \eqqcolon (w_2,\chi_2)$, let $\bfw \eqqcolon (w,\chi)$, and set for all $s\in K_t$ the $H^1(K_x)$ function
\begin{align*}
r(s) \coloneqq w(s,\bigcdot) - w_2(s,\bigcdot) - \int_{K_x}\big(w(s,x) - w_2(s,x)\big)\, \mathrm{d}x.
\end{align*}
We define the $\mathbb{P}_0(K_t) \otimes \mathbb{P}_k(K_x) \subset \mathbb{T}_k(K)$ function 
\begin{align*}
w_3 \coloneqq \frac{1}{|K_t|} \int_{K_t} \mathcal{P}\left(r(s)\right)\mathrm{d}s.
\end{align*}
Combining the upper bound in \eqref{eq:BoundP} and \Poincare's inequality yields 
\begin{align}\label{eq:boundAvg}
h_x^{-2} \lVert w_3 \rVert_{L^2(K)}^2 + \lVert \nabla_x w_3\rVert_{L^2(K)}^2 \lesssim \lVert \nabla_x (w -w_2) \rVert^2_{L^2(K)} \lesssim \lVert \bfw\rVert^2_{H(A^*,K)}.
\end{align}
Fubini's theorem, $\int_K w-w_2\,\mathrm{d}z = 0$, and \eqref{eq:Proofw2tnew} imply for all $v_h \in \mathbb{T}_1(K)$ that
\begin{align*}
\int_{K} \partial_t v_h \,(w-w_2-w_3)\,\mathrm{d}z & = \int_{K_x} \left(\int_{K_t}  \partial_t v_h\,\left(r(s) \right) \,\mathrm{d}s 
 -|K_t|\, \partial_t v_h\, w_{3}\right)\, \mathrm{d}x\\
 & = \int_{K_x} \int_{K_t}  \partial_t v_h\,(r(s)-\mathcal{P}(r(s)))\,\mathrm{d}s\,\mathrm{d}x = 0. 
\end{align*}
Similar arguments show that 
\begin{align*}
\int_{K_t \times \partial K_x} p_0 (w-w_2-w_3)\,\mathrm{d}s = 0\qquad\text{for all }p_0 \in \mathbb{P}_0(\mathcal{F}^x(K)). 
\end{align*}
We set the function $\bfw_3 \coloneqq \bfw_2 + (w_3,0) = (w_2+w_3,\chi_2)$, which satisfies \eqref{eq:LocFortinA00}--\eqref{eq:LocFortinB2}.
The inequalities in \eqref{eq:Boundw1} and \eqref{eq:boundAvg} yield boundedness
\begin{align}\label{eq:Boundw2-}
\lVert \bfw_3 \rVert_{H(A^*,K)} \lesssim (1+h_x+h_t +h_t/h_x) \lVert \bfw \rVert_{H(A^*,K)}.
\end{align}

\textit{Step 4 (Divergence-free correction $\bfw_4$).}
Since the gradients of discrete functions $\nabla_x \mathbb{T}_1(K)$ are contained in the space $T_1(K;\mathbb{R}^d)$ in the sense that $\nabla_x \mathbb{T}_1(K) = \mathbb{P}_1(K_t) \otimes \mathbb{P}_0(K_x;\mathbb{R}^d) \subset \mathbb{T}_1(K;\mathbb{R}^d)$, there exists the $L^2$-orthogonal projection $\chi_4 = \nabla_x r_h \in \mathbb{T}_1(K;\mathbb{R}^d)$ with $r_h \in \mathbb{T}_1(K)$ such that 
\begin{align}\label{eq:PropChi2}
\int_{K} \nabla_x v_h \cdot ({\chi} - {\chi_2} - {\chi}_4 )\,\mathrm{d}z = 0\qquad\text{for all } {v}_h \in \mathbb{T}_1(K).
\end{align}
It holds that $\textup{div}_x\, \chi_4 = \textup{div}_x\, \nabla_x r_h =0$ and the norm satisfies
\begin{align*}
\lVert {\chi}_4 \rVert_{L^2({K})}\leq \lVert {\chi} - {\chi}_2 \rVert_{L^2({K})}.
\end{align*}
Combining these observations results for $\bfw_4 \coloneqq \bfw_3 + (0,\chi_4)$ in \eqref{eq:LocFortinA00}--\eqref{eq:LocFortinD} and
\begin{align}\label{eq:Boundw2}
\lVert \bfw_4 \rVert_{H(A^*,K)}\lesssim (1+h_x+h_t +h_t/h_x) \lVert \bfw \rVert_{H(A^*,K)}.
\end{align}

\textit{Step 5 (Conclusion)}.
We set the bounded \eqref{eq:Boundw2} local Fortin operator as
\begin{align}
\Pi_K \bfw \coloneqq  \bfw_4.
\end{align}
Combining an integration by parts with \eqref{eq:LocFortinA00} (where we use $\mathbb{T}_1(K)\subset \textup{div}\, Y_h(K)$), \eqref{eq:LocFortinB2}, and \eqref{eq:LocFortinD} yields \eqref{eq:LocFortinC}. 
An integration by parts and \eqref{eq:LocFortinB} result in \eqref{eq:LocFortinE}.
The properties \eqref{eq:LocFortinA00}, \eqref{eq:LocFortinB}, and \eqref{eq:LocFortinD} imply \eqref{eq:LocFortinA}. 
\end{proof}
We define the element-wise application of the local Fortin operator $\Pi_K$ by
\begin{align*}
(\Pi_\mathcal{T} \bfw)|_K \coloneqq \Pi_K( \bfw|_K)\qquad\text{for all }K\in \mathcal{T}\text{ and }\bfw \in H(A^*,\mathcal{T}).
\end{align*}
Let $\mathcal{P}_{\mathbb{P}_1(\mathcal{T}_0)}:L^2(\Omega) \to \mathbb{P}_1(\mathcal{T}_0)$ be the $L^2$-orthogonal projector onto piece-wise affine functions $\mathbb{P}_1(\mathcal{T}_0)$. 
We set for all $(\bfw,\xi) \in Y$ the global Fortin operator
\begin{align}
\Pi \bfw \coloneqq (\Pi_\mathcal{T}\bfw,\mathcal{P}_{\mathbb{P}_1(\mathcal{T}_0)} \xi ).
\end{align}
Theorem \ref{thm:Fortin} follows from Lemma \ref{lem:locFortin} and the properties of the projector $\mathcal{P}_{\mathbb{P}_1(\mathcal{T}_0)}$.
\subsection{Error Control}\label{sec:errorControl}
Theorem \ref{thm:Fortin} allows for the application of the abstract results in Section \ref{sec:abstrDiscrProb}, including quasi-optimality. More precisely, let $(\bfu,\bfs)\in X$ denote the solution to \eqref{eq:VarProb} and let $(\bfu_h,\bfs_h) \in X_h$ be the solution to the practical DPG scheme 
\begin{align*}
(\bfu_h,\bfs_h) = \argmin_{(\bfv_h,\bft_h)\in X_h} \lVert b(\bfv_h,\bft_h;\bigcdot) - F \rVert_{Y_h^*}.
\end{align*}
Then it holds with the constants $\beta$ and $\lVert b \rVert$ from \eqref{eq:Defbeta} that
\begin{align}\label{eq:quasiOptLowOrder}
\lVert (\bfu,\bfs)-(\bfu_h,\bfs_h) \rVert_X \leq \lVert b \rVert \, \lVert \Pi \rVert\, \beta^{-1} \min_{(\bfv_h,\bft_h)\in X_h}\lVert (\bfu,\bfs) - (\bfv_h,\bft_h) \lVert_X.
\end{align}
Theorem \ref{thm:equiProbs} shows that $\bfs = \gamma_A \bfu$ and so we have by Remark \ref{cor:Norms} for all $\bfv \in L^2(Q)\times L^2(Q;\mathbb{R}^d)$ and $\bfq \in U_0$
\begin{align*}
\lVert (\bfu,\bfs) - (\bfv,\gamma_A \bfq) \rVert_X \lesssim \lVert \bfu - \bfv \rVert_{L^2(Q)} + \lVert \bfu - \bfq \rVert_{U}.
\end{align*}
This allows to bound the error in the trace approximation by the error with respect to the norm in $U_0$.
To conclude rates of convergence, a common approach uses error estimates for interpolation operators like the one derived in \cite[Sec.\ 4.1.1]{FuehrerKarkulik19}.
This result requires tensor product meshes $\mathcal{T} = \mathcal{I}_h \otimes \mathcal{T}_0$, where $\mathcal{I}_h$ is a partition of the time interval $\mathcal{I}$ and the triangulation $\mathcal{T}_0$ of $\Omega$ allows for $H^1$-stability of the $L^2$-orthogonal projector $\mathcal{P}_{W_h}:L^2(\Omega)\to W_h \coloneqq \mathbb{P}_1(\mathcal{T}_0) \cap H^1_0(\Omega)$, that is, 
\begin{align}\label{eq:L2StabH1}
\lVert \nabla_x \mathcal{P}_{W_h} v \rVert_{L^2(\Omega)}\lesssim \lVert \nabla_x v \rVert_{L^2(\Omega)}\qquad\text{for all }v\in H_0^1(\Omega).
\end{align}
This estimate holds for quasi-uniform triangulations $\mathcal{T}_0$ and certain adaptively refined meshes, c.f.\ \cite{DieningStornTscherpel20} and the references therein.
\begin{lemma}[Interpolation on tensor product meshes]\label{lem:InterpolTensMesh}
Suppose $\mathcal{T} = \mathcal{I}_h \otimes \mathcal{T}_0$ is a tensor product mesh that allows for \eqref{eq:L2StabH1}. Let $h_t = \max_{K_t\in \mathcal{I}_h} |K_t|$ and $h_x = \max_{K_x \in \mathcal{T}_0} \textup{diam}(K_x)$ denote the maximal time step and cell width. There exists an operator $I_h:C(\overline{\mathcal{I}};L^2(\Omega)) \cap L^2(\mathcal{I};H^1_0(\Omega))\to V_h$ with 
\begin{align*}
\lVert \nabla_x( u - I_hu)\rVert_{L^2(Q)} & \lesssim  h_t \lVert u \rVert_{H^1(\mathcal{I};H^1_0(\Omega))} + h_x \lVert u \rVert_{L^\infty(\mathcal{I};H^2(\Omega))},\\
\lVert \partial_t ( u - I_h u)\rVert_{L^2(\mathcal{I};H^{-1}(\Omega))} & \lesssim h_t  \lVert u \rVert_{H^2(\mathcal{I};H^{-1}(\Omega))} + h_x \lVert u \rVert_{L^\infty(\mathcal{I};H^1_0(\Omega))},\\
\lVert \partial_t (u - I_hu) \rVert_{L^2(Q)} & \lesssim h_t \lVert u \rVert_{H^2(\mathcal{I};L^2(\Omega))} + h_x \lVert u \rVert_{L^\infty(\mathcal{I};H^2(\Omega))}.
\end{align*}
\end{lemma}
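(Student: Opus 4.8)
The plan is to build $I_h$ as a tensor product of a one-dimensional interpolation operator in time and a stable projection in space. Let $\Pi_t$ be the nodal interpolation operator onto the continuous, piecewise affine functions subordinate to $\mathcal{I}_h$; it is well defined on $C(\overline{\mathcal{I}};L^2(\Omega))$ and reproduces affine-in-time functions. Let $\mathcal{P}_{W_h}\colon L^2(\Omega)\to W_h=\mathbb{P}_1(\mathcal{T}_0)\cap H^1_0(\Omega)$ be the $L^2$-orthogonal projector; it is $L^2(\Omega)$-stable, $H^1_0(\Omega)$-stable by \eqref{eq:L2StabH1}, and hence, by duality and symmetry, $H^{-1}(\Omega)$-stable. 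Since the mesh has tensor product form, $\Pi_t$ and $\mathcal{P}_{W_h}$ commute, so $I_h\coloneqq\Pi_t\circ\mathcal{P}_{W_h}=\mathcal{P}_{W_h}\circ\Pi_t$ is well defined on the stated domain, and because $\mathcal{P}_{W_h}$ maps into $H^1_0(\Omega)$ while $\Pi_t$ maps into the continuous, piecewise affine functions in time, we obtain $I_hu\in\mathbb{T}_1(\mathcal{T})\cap H^1_D(Q)=V_h$. (An alternative would be to define $I_h$ as a global projection onto $V_h$ in the natural graph norm and use a C\'ea estimate; I would nonetheless lead with the explicit tensor-product operator.)

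For the three estimates I would use the commuting structure to write
\[
u-I_hu=(\textup{id}-\mathcal{P}_{W_h})u+\mathcal{P}_{W_h}(\textup{id}-\Pi_t)u
\]
and bound the two summands separately. The \emph{spatial defect} $(\textup{id}-\mathcal{P}_{W_h})u$ is treated pointwise in time with the standard estimates for $\mathcal{P}_{W_h}$: $\lVert\nabla_x(\textup{id}-\mathcal{P}_{W_h})v\rVert_{L^2(\Omega)}\lesssim h_x\lVert v\rVert_{H^2(\Omega)}$ (which needs \eqref{eq:L2StabH1}) and, through an Aubin--Nitsche duality argument exploiting $W_h\subset H^1_0(\Omega)$, the negative-norm estimates $\lVert(\textup{id}-\mathcal{P}_{W_h})v\rVert_{H^{-1}(\Omega)}\lesssim h_x\lVert v\rVert_{L^2(\Omega)}$ for $v\in L^2(\Omega)$ and $\lesssim h_x^2\lVert\nabla_x v\rVert_{L^2(\Omega)}$ for $v\in H^1_0(\Omega)$; integrating over $\mathcal{I}$ and commuting $\nabla_x$, $\partial_t$ with $\mathcal{P}_{W_h}$ then produces the $h_x$-contributions. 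The \emph{temporal defect} $\mathcal{P}_{W_h}(\textup{id}-\Pi_t)u$ is handled by first removing $\mathcal{P}_{W_h}$ via its $L^2$-, $H^1_0$- or $H^{-1}$-stability and invoking the classical one-dimensional interpolation error bounds $\lVert(\textup{id}-\Pi_t)g\rVert_{L^2(\mathcal{I};Z)}\lesssim h_t\lVert g\rVert_{H^1(\mathcal{I};Z)}$ and $\lVert\partial_t(\textup{id}-\Pi_t)g\rVert_{L^2(\mathcal{I};Z)}\lesssim h_t\lVert g\rVert_{H^2(\mathcal{I};Z)}$ for a Hilbert space $Z$, applied with $g\in\{u,\nabla_x u,\partial_t u\}$ and $Z\in\{L^2(\Omega),L^2(\Omega;\mathbb{R}^d),H^{-1}(\Omega)\}$; these yield the $h_t$-contributions and, in particular, all terms of the first estimate.

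The step I expect to be the main obstacle is the estimate of $\partial_t(u-I_hu)$ in $L^2(\mathcal{I};H^{-1}(\Omega))$, because $\partial_t$ does not commute with the temporal nodal interpolation. Here I would argue slab by slab: on $\mathcal{I}_j=(t_{j-1},t_j)$ the function $\partial_t I_hu$ is constant in time and equals $\mathcal{P}_{W_h}$ applied to the difference quotient $\tau_j^{-1}(u(t_j,\bigcdot)-u(t_{j-1},\bigcdot))=\tau_j^{-1}\int_{\mathcal{I}_j}\partial_t u\,\mathrm{d}s$, so
\[
\partial_t(u-I_hu)\big|_{\mathcal{I}_j}=\Big(\partial_t u-\tfrac{1}{\tau_j}\int_{\mathcal{I}_j}\partial_t u\,\mathrm{d}s\Big)+(\textup{id}-\mathcal{P}_{W_h})\,\tfrac{1}{\tau_j}\int_{\mathcal{I}_j}\partial_t u\,\mathrm{d}s .
\]
The first summand is controlled by a one-dimensional Poincar\'e inequality in time, contributing the $h_t$-term; for the second one writes $(\textup{id}-\mathcal{P}_{W_h})\tau_j^{-1}\int_{\mathcal{I}_j}\partial_t u=\tau_j^{-1}\int_{\mathcal{I}_j}(\textup{id}-\mathcal{P}_{W_h})\partial_t u$, applies Cauchy--Schwarz in time on this slab together with the spatial negative-norm estimate and the endpoint regularity $u(t_j,\bigcdot)\in H^1_0(\Omega)$, and keeps careful track of the powers of $\tau_j$ so that no inverse power of $h_t$ survives the summation over slabs — this balancing is precisely where the tensor-product structure and \eqref{eq:L2StabH1} are essential. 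The $\partial_t$-estimate in $L^2(Q)$ is treated by the same slab-by-slab device. All remaining computations are routine, and the argument follows the lines of \cite[Sec.\ 4.1.1]{FuehrerKarkulik19}.
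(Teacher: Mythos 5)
Your construction is the standard one and, as far as the paper is concerned, there is nothing else to compare against: the paper's ``proof'' is only the citation of \cite[Sec.~4.1.1]{FuehrerKarkulik19}, and your operator $I_h=\Pi_t\circ\mathcal{P}_{W_h}$ (nodal interpolation in time composed with the $H^1$-stable $L^2$-projection in space, using \eqref{eq:L2StabH1}), the splitting into spatial and temporal defects, the duality estimates for $\mathcal{P}_{W_h}$ and the one-dimensional interpolation bounds are exactly the ingredients of that kind of argument. The first estimate goes through as you sketch it.

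The genuine gap is precisely the step you flag and then assert works out, namely that ``no inverse power of $h_t$ survives the summation over slabs'' in the $L^2(\mathcal{I};H^{-1}(\Omega))$ bound (and likewise in the $L^2(Q)$ bound) for $\partial_t(u-I_hu)$. On the slab $\mathcal{I}_j$ the critical term is the time-constant function $(\mathrm{id}-\mathcal{P}_{W_h})\,\tau_j^{-1}(u(t_j)-u(t_{j-1}))$, whose squared $L^2(\mathcal{I}_j;H^{-1})$ norm is $\tau_j^{-1}\lVert(\mathrm{id}-\mathcal{P}_{W_h})(u(t_j)-u(t_{j-1}))\rVert_{H^{-1}}^2$. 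The two tools you invoke give either, via Cauchy--Schwarz in time and $\lVert(\mathrm{id}-\mathcal{P}_{W_h})v\rVert_{H^{-1}}\lesssim h_x\lVert v\rVert_{L^2}$, a total contribution $h_x\lVert\partial_t u\rVert_{L^2(Q)}$ --- a different norm than the stated $h_x\lVert u\rVert_{L^\infty(\mathcal{I};H^1_0)}$ --- or, via the endpoint regularity and the $h_x^2$-estimate, a per-slab contribution $\tau_j^{-1}h_x^4\lVert u\rVert_{L^\infty(\mathcal{I};H^1_0)}^2$, which sums to $(h_x^2/h_t)^2\lVert u\rVert_{L^\infty(\mathcal{I};H^1_0)}^2$ up to a factor $T_{\textup{end}}$; the factor $h_x^2/h_t$ reduces to $h_x$ only if $h_t\gtrsim h_x$, and is $O(1)$ under parabolic scaling. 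No cleverer bookkeeping can repair this in the stated generality: on a uniform spatial mesh of size $h_x$ with $1/h_x\in\mathbb{N}$, take $u(t,x)=\sin(\omega t)\sin(\pi x/h_x)$ with $\omega=h_x^{-3/2}$ and $h_t=h_x^2$. The spatial factor is a Dirichlet eigenfunction that is $L^2(\Omega)$-orthogonal, hence also $H^{-1}(\Omega)$-orthogonal, to $W_h$, while every $v_h\in V_h$ satisfies $\partial_t v_h(t,\bigcdot)\in W_h$; therefore $\lVert\partial_t(u-v_h)\rVert_{L^2(\mathcal{I};H^{-1})}\gtrsim\omega h_x=h_x^{-1/2}$ for every $v_h\in V_h$, whereas $h_t\lVert u\rVert_{H^2(\mathcal{I};H^{-1})}+h_x\lVert u\rVert_{L^\infty(\mathcal{I};H^1_0)}\lesssim 1$. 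The same mechanism affects your treatment of the third estimate. So your proof closes only if you either replace the $h_x$-terms by the norms your Cauchy--Schwarz route actually produces (essentially $h_x\lVert\partial_t u\rVert_{L^2(Q)}$, respectively $h_x\lVert\nabla_x\partial_t u\rVert_{L^2(Q)}$) or additionally assume $h_t\gtrsim h_x$; you should check the exact form of the right-hand sides in \cite[Sec.~4.1.1]{FuehrerKarkulik19}, since that citation is all the paper offers as proof.
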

\begin{proof}
This result is proven in \cite[Sec.\ 4.1.1]{FuehrerKarkulik19}. 
\end{proof}
The combination of Lemma \ref{lem:InterpolTensMesh} and the quasi-optimality \eqref{eq:quasiOptLowOrder} results for sufficiently smooth solutions in rates of convergence depending on the maximal mesh size in time $h_t$ and in space direction $h_x$.

Besides the a priori estimate in \eqref{eq:quasiOptLowOrder}, the DPG scheme allows for the built-in a posteriori error estimator in \eqref{eq:Aposteriori} consisting of the locally computable residual $\lVert b(\bfu_h,\bfs_h;\bigcdot)-F \rVert_{Y_h^*}$ and the contribution $\lVert F\circ (\textup{id}-\Pi)\rVert_{Y^*}$. Given initial data $u_0 \in L^2(\Omega)$ and a right-hand side $\bff \in L^2(Q)\times L^2(Q;\mathbb{R}^d)$, let the functional
\begin{align*}
F(\bfw,\xi) \coloneqq \langle \bff,\bfw\rangle_{L^2(Q)} + \langle u_0,\xi\rangle_{L^2(\Omega)}\qquad\text{for all } (\bfw,\xi)\in Y. 
\end{align*}
Let $\mathcal{P}_{\mathbb{T}_0(\mathcal{T})}:L^2(Q;\mathbb{R}^{d+1}) \to \mathbb{T}_0(\mathcal{T};\mathbb{R}^{d+1})$ and $\mathcal{P}_{\mathbb{P}_1(\mathcal{T}_0)}:L^2(\Omega) \to \mathbb{P}_1(\mathcal{T}_0)$ denote the $L^2$-orthogonal projectors onto $\mathbb{T}_0(\mathcal{T};\mathbb{R}^{d+1})$ and $\mathbb{P}_1(\mathcal{T}_0)$.
Property \eqref{eq:PropModFortin} of the Fortin operator $\Pi$ allows to bound the latter contribution by
\begin{align}\label{eq:DataApxNaiv}
\lVert F\circ (\textup{id}-\Pi)\rVert^2_{Y^*} \leq \lVert \textup{id} - \Pi \rVert^2 \big(\lVert \bff - \mathcal{P}_{\mathbb{T}_0(\mathcal{T})} \bff\rVert_{L^2(Q)}^2 + \lVert u_0 - \mathcal{P}_{\mathbb{P}_1(\mathcal{T}_0)} u_0\rVert_{L^2(\Omega)}^2\big).
\end{align}
The remainder of this section improves this upper bound for right-hand sides $\bff = (f,0)^\top$ with $f\in L^2(Q)$. 

Let $K = K_t \times K_x \in \mathcal{T}$ with $K_t \subset \mathbb{R}$ and $K_x \subset \mathbb{R}^d$. 
We define the $L^2$-orthogonal projector $\mathcal{P}_{\mathbb{P}_0 L^2}:L^2(K) \to \mathbb{P}_0(K_t;L^2(K_x)) \coloneqq \lbrace w \in L^2(K)\mid \partial_t w = 0\rbrace$ onto $L^2$-functions that are constant in time, that is, for all $w\in L^2(K)$ and $(x,t) \in K$ 
\begin{align*}
\mathcal{P}_{\mathbb{P}_0 L^2} w(x,t) = \dashint_{K_t} w(x,s)\,\mathrm{d}s \coloneqq \frac{1}{|K_t|}\int_{K_t} w(x,s)\,\mathrm{d}s.
\end{align*}
\begin{lemma}[Properties of the time average $\mathcal{P}_{\mathbb{P}_0 L^2}$]
Let $w \in H(A^*,K)$ with $K\in\mathcal{T}$. It holds that
\begin{align}\label{eq:propP01}
\nabla_x \mathcal{P}_{\mathbb{P}_0 L^2} w = \mathcal{P}_{\mathbb{P}_0 L^2} \nabla_x w.
\end{align}
If in addition the integral $\int_K w \,\mathrm{d}z = 0$ equals zero, we have $\int_K \mathcal{P}_{\mathbb{P}_0 L^2} w  \,\mathrm{d}z = 0$ and 
\begin{align}\label{eq:propP02}
\lVert  \mathcal{P}_{\mathbb{P}_0 L^2} w \rVert_{L^2(K)} \lesssim h_x(K) \, \lVert \mathcal{P}_{\mathbb{P}_0 L^2} \nabla_x w \rVert_{L^2(K)} \leq  h_x(K)\, \lVert  \nabla_x w \rVert_{L^2(K)}.
\end{align}
\end{lemma}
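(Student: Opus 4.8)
The plan is to derive both assertions from the interchange identity \eqref{eq:propP01}: I would prove that one first and then obtain \eqref{eq:propP02} from a \Poincare\ inequality on the spatial simplex $K_x$ together with the contraction property of the $L^2$-orthogonal projector $\mathcal{P}_{\mathbb{P}_0 L^2}$. Write $\bfw = (w,\chi)$; the hypothesis $\bfw \in H(A^*,K)$ provides $w \in L^2(K)$ with $\nabla_x w \in L^2(K;\mathbb{R}^d)$, i.e.\ $w \in L^2(K_t;H^1(K_x))$, so that for almost every $s \in K_t$ the slice $w(\bigcdot,s)$ lies in $H^1(K_x)$ with weak spatial gradient $\nabla_x w(\bigcdot,s)$.

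For \eqref{eq:propP01} I would use that $\mathcal{P}_{\mathbb{P}_0 L^2} w$ does not depend on the time variable, so it is enough to identify its weak gradient over the slice $K_x$. Fixing $\phi \in C_c^\infty(K_x;\mathbb{R}^d)$, the definition of $\mathcal{P}_{\mathbb{P}_0 L^2}$ together with Fubini's theorem, slice-wise integration by parts, and a further use of Fubini give
\begin{align*}
\int_{K_x} (\mathcal{P}_{\mathbb{P}_0 L^2} w)\,\textup{div}_x \phi \,\mathrm{d}x
&= \dashint_{K_t}\int_{K_x} w(x,s)\,\textup{div}_x\phi(x)\,\mathrm{d}x\,\mathrm{d}s\\
&= -\dashint_{K_t}\int_{K_x}\nabla_x w(x,s)\cdot\phi(x)\,\mathrm{d}x\,\mathrm{d}s\\
&= -\int_{K_x}\big(\mathcal{P}_{\mathbb{P}_0 L^2}\nabla_x w\big)\cdot\phi\,\mathrm{d}x,
\end{align*}
where $\mathcal{P}_{\mathbb{P}_0 L^2}$ is applied componentwise to $\nabla_x w$. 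Hence $\mathcal{P}_{\mathbb{P}_0 L^2} w$ admits the weak spatial gradient $\mathcal{P}_{\mathbb{P}_0 L^2}\nabla_x w$; as both functions are constant in time, this is precisely \eqref{eq:propP01} in $L^2(K;\mathbb{R}^d)$.

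For \eqref{eq:propP02}, Fubini's theorem and the hypothesis give $\int_K \mathcal{P}_{\mathbb{P}_0 L^2} w\,\mathrm{d}z = \int_K w\,\mathrm{d}z = 0$, and since $\mathcal{P}_{\mathbb{P}_0 L^2} w$ is constant in time, each slice $(\mathcal{P}_{\mathbb{P}_0 L^2} w)(\bigcdot,t_0)$ is a mean-zero $H^1$ function on the convex simplex $K_x$. The \Poincare\ inequality on $K_x$ for mean-zero functions, whose constant is $\lesssim \textup{diam}(K_x) = h_x(K)$ by convexity of $K_x$, bounds the $L^2(K_x)$ norm of this slice by $h_x(K)$ times the $L^2(K_x)$ norm of its gradient; since both $\mathcal{P}_{\mathbb{P}_0 L^2} w$ and $\nabla_x \mathcal{P}_{\mathbb{P}_0 L^2} w$ are time-independent, this slice-wise estimate lifts verbatim to $\lVert \mathcal{P}_{\mathbb{P}_0 L^2} w\rVert_{L^2(K)} \lesssim h_x(K)\,\lVert \nabla_x \mathcal{P}_{\mathbb{P}_0 L^2} w\rVert_{L^2(K)}$. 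Rewriting the gradient via \eqref{eq:propP01} and using that the orthogonal projector $\mathcal{P}_{\mathbb{P}_0 L^2}$ is norm-decreasing componentwise yields the remaining inequality $\lVert \mathcal{P}_{\mathbb{P}_0 L^2}\nabla_x w\rVert_{L^2(K)} \leq \lVert \nabla_x w\rVert_{L^2(K)}$.

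All computations above are routine; the one step that deserves care is the slicing used at the outset, namely that $\nabla_x w \in L^2(K;\mathbb{R}^d)$ forces $w(\bigcdot,s) \in H^1(K_x)$ with gradient $\nabla_x w(\bigcdot,s)$ for a.e.\ $s$, which is the Fubini theorem for weak derivatives. Alternatively, one can sidestep this by first checking \eqref{eq:propP01} for $w \in C^\infty(\overline{K})$ by differentiation under the integral sign and then passing to the limit, using density of smooth functions in $L^2(K_t;H^1(K_x))$, boundedness of $\mathcal{P}_{\mathbb{P}_0 L^2}$ on $L^2(K)$, and closedness of the weak-gradient operator.
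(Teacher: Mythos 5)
Your proof is correct and follows essentially the same route as the paper: the commutation identity \eqref{eq:propP01} is obtained by testing with spatial test functions, Fubini, and integration by parts (the paper does the integration by parts over the whole cell $K$ and refers to the Bochner-space characterization, while you do it slice-wise and justify the slicing; these are equivalent), and \eqref{eq:propP02} follows exactly as in the paper from the inherited zero mean, a \Poincare\ inequality on $K_x$, and the $L^2$-contraction property of $\mathcal{P}_{\mathbb{P}_0 L^2}$ combined with \eqref{eq:propP01}. The only cosmetic difference is that you invoke convexity of the simplex for the \Poincare\ constant where the paper cites shape regularity in space; both are adequate.
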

\begin{proof}
Let $w\in H(A^*,K)$ be arbitrary with cell $K\in \mathcal{T}$.

\textit{Step 1 (Proof of \eqref{eq:propP01})}.
For all $\xi \in C_0^\infty(\textup{int}(K_x);\mathbb{R}^d)$ we have 
\begin{align*}
&\langle \mathcal{P}_{\mathbb{P}_0 L^2} \nabla_x w , \xi \rangle_{L^2(K_x)} = \int_{K_x} \dashint_{K_t} \nabla_x w \cdot \xi\,\mathrm{d}s \,\mathrm{d}x = \frac{1}{|K_t|} \int_K \nabla_xw\cdot \xi \,\mathrm{d}z\\
& = \frac{-1}{|K_t|} \int_K w \,\textup{div}_x \, \xi \,\mathrm{d}z = -\int_{K_x} \dashint_{K_t} w\,\mathrm{d}s\, \textup{div}_x \,\xi \,\mathrm{d}x = -\langle  \mathcal{P}_{\mathbb{P}_0 L^2} w,\textup{div}_x \, \xi \rangle_{L^2(K_x)}.
\end{align*}
As the proof of Lemma \ref{lem:BochnerSpaces} shows, this identity is equivalent to \eqref{eq:propP01}.

\textit{Step 2 (Proof of \eqref{eq:propP02})}.
Suppose $\int_K w\,\mathrm{d}z = 0$. Fubini's theorem yields
\begin{align}\label{eq:asdfs34}
\int_K\mathcal{P}_{\mathbb{P}_0 L^2} w\,\mathrm{d}z = \frac{1}{|K_t|}  \int_K \int_{K_t} w \,\mathrm{d}s\, \mathrm{d}z = \frac{1}{|K_t|}  \int_{K_t} \int_K w \,\mathrm{d}z\, \mathrm{d}s = 0.
\end{align}
Since $\mathcal{P}_{\mathbb{P}_0 L^2} w$ is constant in time, \eqref{eq:asdfs34} implies $\int_{K_x}\mathcal{P}_{\mathbb{P}_0 L^2} w\,\mathrm{d}x = 0$. Thus, Fubini's theorem, \Poincare's inequality, shape regularity in space, and \eqref{eq:propP01} yield
\begin{align*}
\lVert \mathcal{P}_{\mathbb{P}_0 L^2} w \rVert_{L^2(K)}& \lesssim  h_x(K)\, \lVert \nabla_x \mathcal{P}_{\mathbb{P}_0 L^2}  w \rVert_{L^2(K)}\leq h_x(K)\, \lVert  \nabla_x w \rVert_{L^2(K)}.\qedhere
\end{align*}
\end{proof}
Let $\mathcal{P}_{\mathbb{T}_0(K)}:L^2(K) \to \mathbb{T}_0(K)$ denote the $L^2$-orthogonal projection onto constant functions for all $K\in \mathcal{T}$.
The properties of the time average operator $\mathcal{P}_{\mathbb{P}_0L^2}$ lead to the following result which allows us to replace the term $\lVert \bff - \mathcal{P}_{\mathbb{T}_0(\mathcal{T})}\bff\rVert_{L^2(Q)}^2$ for right-hand sides $\bff = (f,0)^\top$ with $f\in L^2(Q)$ in \eqref{eq:DataApxNaiv} by 
\begin{align*}
 \sum_{K\in \mathcal{T}} \left( h_x(K)^2 \lVert f- \mathcal{P}_{\mathbb{T}_0(K)} f\rVert_{L^2(K)}^2+  \left\lVert f - \dashint_{K_t} f \,\mathrm{d}s \right\rVert_{L^2(K)}^2\right).
\end{align*}
The first addend is of higher order. The second addend is smaller than  $\lVert f - \mathcal{P}_{\mathbb{T}_0(K)}f\rVert_{L^2(K)}^2 = \lVert \bff - \mathcal{P}_{\mathbb{T}_0(\mathcal{T})}\bff\rVert_{L^2(K)}^2$ and leads to a significant improvement of the upper bound if $f$ is smooth in time but rough in space.
\begin{theorem}[Data Approximation]\label{thm:DataApx}
Suppose the right-hand side reads $\bff = (f,0)^\top$ with $f\in L^2(Q)$. Then the data approximation error is bounded by
\begin{align*}
&\lVert F \circ (\textup{id}-\Pi) \rVert_{Y^*}^2 \lesssim \lVert u_0 - \mathcal{P}_{\mathbb{P}_1(\mathcal{T}_0)} u_0 \rVert_{L^2(\Omega)}^2 +\\
&\qquad\quad \sum_{K\in \mathcal{T}} \left( h_x(K)^2 \lVert \mathcal{P}_{\mathbb{P}_0 L^2}  (f- \mathcal{P}_{\mathbb{T}_0(K)} f)\rVert_{L^2(K)}^2+  \lVert f - \mathcal{P}_{\mathbb{P}_0 L^2} f\rVert_{L^2(K)}^2\right).
\end{align*}
The hidden constant depends solely on the uniformly bounded (Theorem \ref{thm:Fortin}) operator norm $\lVert \Pi\rVert$ and the shape regularity in space.
\end{theorem}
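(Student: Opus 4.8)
The plan is to bound the linear functional $(\bfw,\xi)\mapsto F\big((\bfw,\xi)-\Pi(\bfw,\xi)\big)$ on $Y$ and read off $\lVert F\circ(\textup{id}-\Pi)\rVert_{Y^*}$ as the resulting supremum. Writing $\bfw=(w,\chi)$, the special right-hand side $\bff=(f,0)^\top$ and the splitting $\Pi(\bfw,\xi)=(\Pi_0\bfw,\Pi_1\xi)$ from Theorem~\ref{thm:Fortin} give
\begin{align*}
F\big((\bfw,\xi)-\Pi(\bfw,\xi)\big)=\big\langle f,\,w-(\Pi_0\bfw)_t\big\rangle_{L^2(Q)}+\big\langle u_0,\,\xi-\Pi_1\xi\big\rangle_{L^2(\Omega)}.
\end{align*}
Since $\Pi_1=\mathcal{P}_{\mathbb{P}_1(\mathcal{T}_0)}$ is an $L^2(\Omega)$-orthogonal projector, $\textup{id}-\Pi_1$ is self-adjoint and idempotent, so $\langle u_0,\xi-\Pi_1\xi\rangle_{L^2(\Omega)}=\langle u_0-\mathcal{P}_{\mathbb{P}_1(\mathcal{T}_0)}u_0,\xi-\Pi_1\xi\rangle_{L^2(\Omega)}$, which is at most $\lVert u_0-\mathcal{P}_{\mathbb{P}_1(\mathcal{T}_0)}u_0\rVert_{L^2(\Omega)}\lVert(\bfw,\xi)\rVert_Y$; this produces the initial-data contribution in the claim.

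The volume term is treated cell by cell. Set $e\coloneqq w-(\Pi_0\bfw)_t$, so $e|_K=w|_K-\big(\Pi_K(\bfw|_K)\big)_t$, and recall from \eqref{eq:PropModFortin} (testing with the constant vector in the time direction) that $\int_K e\,\mathrm{d}z=0$, whence also $\langle\mathcal{P}_{\mathbb{T}_0(K)}f,e\rangle_{L^2(K)}=0$. Split $e=\mathcal{P}_{\mathbb{P}_0 L^2}e+(e-\mathcal{P}_{\mathbb{P}_0 L^2}e)$. For the time-oscillating part, orthogonality of $\mathcal{P}_{\mathbb{P}_0 L^2}$ gives $\langle f,e-\mathcal{P}_{\mathbb{P}_0 L^2}e\rangle_{L^2(K)}=\langle f-\mathcal{P}_{\mathbb{P}_0 L^2}f,e-\mathcal{P}_{\mathbb{P}_0 L^2}e\rangle_{L^2(K)}\le\lVert f-\mathcal{P}_{\mathbb{P}_0 L^2}f\rVert_{L^2(K)}\lVert e\rVert_{L^2(K)}$, and $\lVert e\rVert_{L^2(K)}\le\lVert w\rVert_{L^2(K)}+\lVert(\Pi_K\bfw)_t\rVert_{L^2(K)}\lesssim\lVert\Pi\rVert\lVert\bfw\rVert_{H(A^*,K)}$ by the bound on $\Pi_K$ from Lemma~\ref{lem:locFortin} (with $\max_K\lVert\Pi_K\rVert\le\lVert\Pi\rVert$, obtained by isolating a single cell). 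For the time-averaged part, $\mathcal{P}_{\mathbb{P}_0 L^2}e$ is constant in time with $\int_K\mathcal{P}_{\mathbb{P}_0 L^2}e\,\mathrm{d}z=0$, so $\langle f,\mathcal{P}_{\mathbb{P}_0 L^2}e\rangle_{L^2(K)}=\langle\mathcal{P}_{\mathbb{P}_0 L^2}(f-\mathcal{P}_{\mathbb{T}_0(K)}f),\mathcal{P}_{\mathbb{P}_0 L^2}e\rangle_{L^2(K)}$; by \eqref{eq:propP01}--\eqref{eq:propP02} (applicable since $\nabla_x e\in L^2(K)$ and $\int_K e=0$) one has $\lVert\mathcal{P}_{\mathbb{P}_0 L^2}e\rVert_{L^2(K)}\lesssim h_x(K)\lVert\nabla_x e\rVert_{L^2(K)}$, and $\lVert\nabla_x e\rVert_{L^2(K)}\le\lVert\nabla_x w\rVert_{L^2(K)}+\lVert\nabla_x(\Pi_K\bfw)_t\rVert_{L^2(K)}\lesssim\lVert\Pi\rVert\lVert\bfw\rVert_{H(A^*,K)}$, both summands being controlled by the graph norm (the first because $\nabla_x w$ enters $A^*\bfw$, the second again by $H(A^*,K)$-stability of $\Pi_K$).

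Collecting the two estimates, summing over $K\in\mathcal{T}$, and applying the Cauchy--Schwarz inequality together with $\sum_{K\in\mathcal{T}}\lVert\bfw\rVert_{H(A^*,K)}^2\le\lVert(\bfw,\xi)\rVert_Y^2$ bounds $\lvert F((\bfw,\xi)-\Pi(\bfw,\xi))\rvert$ by $\lVert(\bfw,\xi)\rVert_Y$ times the square root of the right-hand side of the statement; dividing and squaring completes the proof. I expect the time-averaged contribution to be the only delicate point: one must notice that pairing $e$ with the purely space-dependent quantity $\mathcal{P}_{\mathbb{P}_0 L^2}(f-\mathcal{P}_{\mathbb{T}_0(K)}f)$ sees only $\mathcal{P}_{\mathbb{P}_0 L^2}e$, and then combine the zero-mean Poincar\'e-type estimate \eqref{eq:propP01}--\eqref{eq:propP02} for $e$ with the stability of the local Fortin operator to absorb $\lVert\nabla_x(\Pi_K\bfw)_t\rVert_{L^2(K)}$; the rest is bookkeeping with orthogonal projections.
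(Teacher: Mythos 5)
Your proof is correct and follows essentially the same route as the paper: the same splitting of $F\circ(\textup{id}-\Pi)$ into the initial-data and volume terms, the same use of \eqref{eq:PropModFortin} to subtract $\mathcal{P}_{\mathbb{T}_0(K)}f$, and the same combination of the orthogonality of $\mathcal{P}_{\mathbb{P}_0 L^2}$ with the zero-mean \Poincare-type bound \eqref{eq:propP02} and the stability of $\Pi_K$. The only (cosmetic) difference is that you split the test-function error $e=w-(\Pi_\mathcal{T}\bfw)_t$ into its time average and oscillation, whereas the paper splits $f_0=f-\mathcal{P}_{\mathbb{T}_0(K)}f$; by orthogonality of $\mathcal{P}_{\mathbb{P}_0 L^2}$ the two decompositions produce exactly the same two terms.
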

\begin{proof}
Let $\bff = (f,0)^\top$ with $f\in L^2(Q)$, let $u_0 \in L^2(\Omega)$, and let $(\bfw,\xi)\in Y$ with $\bfw = (w,\chi)$. We have 
\begin{align}\label{eq:proofApx0}
F((\bfw,\xi) -\Pi(\bfw,\xi)) = \langle f,w-(\Pi_\mathcal{T} \bfw)_t\rangle_{L^2(Q)} + \langle u_0, \xi - \mathcal{P}_{\mathbb{P}_1(\mathcal{T}_0)} \xi\rangle_{L^2(\Omega)}.
\end{align}
Since $\mathcal{P}_{\mathbb{P}_1(\mathcal{T}_0)}$ is the $L^2$-orthogonal projector, 
the second addend is bounded by 
\begin{align}\label{eq:proofApx1}
\langle u_0, \xi - \mathcal{P}_{\mathbb{P}_1(\mathcal{T}_0)} \xi\rangle_{L^2(\Omega)} \leq \lVert u_0 - \mathcal{P}_{\mathbb{P}_1(\mathcal{T}_0)} u_0 \rVert_{L^2(\Omega)} \lVert\xi-\mathcal{P}_{\mathbb{P}_1(\mathcal{T}_0)}\xi \rVert_{L^2(\Omega)}.
\end{align}
Set $f_0 \coloneqq f - \mathcal{P}_{\mathbb{T}_0(K)} f$.
Property \eqref{eq:PropModFortin} yields 
\begin{align*}
& \langle f,w-(\Pi_\mathcal{T} \bfw)_t\rangle_{L^2(Q)} = \langle f_0 ,w-(\Pi_\mathcal{T} \bfw)_t\rangle_{L^2(Q)}\\
& = \sum_{K\in \mathcal{T}}\left( \langle \mathcal{P}_{\mathbb{P}_0 L^2} f_0,w -  (\Pi_\mathcal{T} \bfw)_t\rangle_{L^2(K)} + \langle f_0 - \mathcal{P}_{\mathbb{P}_0 L^2} f_0,w -  (\Pi_\mathcal{T} \bfw)_t\rangle_{L^2(K)}\right).
\end{align*}
Property \eqref{eq:propP02} shows for the first addend and all $K\in \mathcal{T}$ that 
\begin{align}\label{eq:proofApx2}
\begin{aligned}
&\langle \mathcal{P}_{\mathbb{P}_0 L^2} f_0,w -  (\Pi_\mathcal{T} \bfw)_t\rangle_{L^2(K)} \\
&\qquad\leq \lVert \mathcal{P}_{\mathbb{P}_0 L^2} f_0 \rVert_{L^2(K)} \lVert \mathcal{P}_{\mathbb{P}_0 L^2} (w -  (\Pi_\mathcal{T} \bfw)_t) \rVert_{L^2(K)}\\
&\qquad \lesssim h_x(K)\, \lVert \mathcal{P}_{\mathbb{P}_0 L^2)} f_0 \rVert_{L^2(K)} \lVert\nabla_x (w -  (\Pi_\mathcal{T} \bfw)_t) \rVert_{L^2(K)}.
\end{aligned}
\end{align}
The second addend satisfies for all $K\in \mathcal{T}$
\begin{align}\label{eq:proofApx3}
\begin{aligned}
&\langle f_0 - \mathcal{P}_{\mathbb{P}_0 L^2} f_0,w -  (\Pi_\mathcal{T} \bfw)_t\rangle_{L^2(K)} \\
&\qquad\leq \lVert  f - \mathcal{P}_{\mathbb{P}_0 L^2} f \rVert_{L^2(K)} \lVert w -  (\Pi_\mathcal{T} \bfw)_t \rVert_{L^2(K)}.
\end{aligned}
\end{align}
The combination of \eqref{eq:proofApx0}--\eqref{eq:proofApx3} yields 
\begin{align*}
&F((\bfw,\xi)-\Pi(\bfw,\xi)) \lesssim \lVert (\bfw,\xi) - \Pi(\bfw,\xi)\rVert_Y \Big[\lVert u_0 - \mathcal{P}_{\mathbb{P}_1(\mathcal{T}_0)} u_0  \rVert_{L^2(\Omega)}^2 + \\
& \qquad \qquad \sum_{K\in \mathcal{T}}\left(h_x (K)^2 \lVert \mathcal{P}_{\mathbb{P}_0 L^2} f_0 \rVert_{L^2(K)}^2 + \lVert  f - \mathcal{P}_{\mathbb{P}_0 L^2} f \rVert_{L^2(K)}^2\right) \Big]^{1/2}.\qedhere
\end{align*}
\end{proof}
\subsection{Equal vs.\ Parabolic Scaling}\label{sec:EquiVsPara}
The stability of the local Fortin operators in Section \ref{sec:Fortin}
depends on the shape regularity in space and the ratio $h_t(K)/h_x(K)$, which allows for differently scaled mesh refinement strategies. In particular, theses strategies should lead to partitions $\mathcal{T}$ where the cells $K\in \mathcal{T}$ are 
\begin{enumerate}
\item equally scaled in the sense that $h_t(K)\eqsim h_x(K)$,
\item parabolically scaled  in the sense that $h_t(K)\eqsim h^2_x(K)$.
\end{enumerate}
According to the interpolation estimates in Lemma \ref{lem:InterpolTensMesh}, sufficiently smooth solutions lead to the rates of convergence 
\begin{align*}
\lVert (\bfu,\bfs) - (\bfu_h,\bfs_h) \rVert_X \lesssim \max_{K\in \mathcal{T}}\big( h_t(K) + h_x(K)\big).
\end{align*}
Let $\textup{ndof}\coloneqq \dim X_h$, then uniform mesh refinements  result in the rates
\begin{align*}
	\lVert (\bfu,\bfs) - (\bfu_h,\bfs_h) \rVert_X& \lesssim \textup{ndof}^{\,-1/(d+1)}&&\text{for equally scaled refinements},\\
	\lVert (\bfu,\bfs) - (\bfu_h,\bfs_h) \rVert_X& \lesssim \textup{ndof}^{\,-1/(d+2)}&&\text{for parabolically scaled refinements}.
\end{align*} 
These inequalities show that parabolically scaled meshes result in a reduced rate of convergence for smooth solutions. On the other hand, the following parabolic \Poincare inequality motivates parabolically scaled meshes for irregular solutions.
\begin{lemma}[Parabolic \Poincare]\label{lem:ParaPoincare}
Let $K \in \mathcal{T}$ and let
$a\in L^1(K)$ with integral mean $\mathcal{P}_{\mathbb{T}_0(K)} a \coloneqq |K|^{-1}\int_K a \,\mathrm{d}z$. Let $G\in L^1(K;\mathbb{R}^d)$ such that $\partial_t a = \textup{div}_x\, G$ in the sense of distributions. Then we have
\begin{align*}
\lVert a - \mathcal{P}_{\mathbb{T}_0(K)} a \rVert^2_{L^2(K)} \lesssim h_x(K)^2 \lVert \nabla_x a \rVert_{L^2(K)}^2 + \frac{h_t(K)^2}{h_x(K)^2}|K|^{1-2/p} \lVert G\rVert_{L^p(K)}^{2}. 
\end{align*}
\end{lemma}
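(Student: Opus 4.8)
The plan is to exploit the minimality of the integral mean — $\lVert a - \mathcal{P}_{\mathbb{T}_0(K)} a\rVert_{L^2(K)} \le \lVert a - c\rVert_{L^2(K)}$ for \emph{every} constant $c$ — and to pick $c$ cleverly so that the error splits into a ``space part'' controlled by $\nabla_x a$ and a ``time part'' controlled by $G$ through the relation $\partial_t a = \textup{div}_x\, G$. Concretely, fix once and for all a weight $\phi \in C_c^\infty(\textup{int}(K_x))$ with $\int_{K_x}\phi\,\mathrm{d}x = 1$, obtained by affinely transplanting a fixed bump on the reference simplex; shape regularity in space gives $\lVert\phi\rVert_{L^\infty(K_x)}\lesssim |K_x|^{-1}$ and $\lVert\nabla_x\phi\rVert_{L^\infty(K_x)}\lesssim |K_x|^{-1}h_x(K)^{-1}$. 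Set $\tilde a(t)\coloneqq \int_{K_x} a(t,\bigcdot)\,\phi\,\mathrm{d}x$ and $c\coloneqq \dashint_{K_t}\tilde a(t)\,\mathrm{d}t$, so that
\[
\lVert a - \mathcal{P}_{\mathbb{T}_0(K)} a\rVert_{L^2(K)} \le \lVert a - \tilde a\rVert_{L^2(K)} + \lVert \tilde a - c\rVert_{L^2(K)}.
\]

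For the first term, for a.e.\ $t$ the slice $a(t,\bigcdot)$ lies in $H^1(K_x)$ and a weighted Poincar\'e estimate on $K_x$ applies: writing $a(t,\bigcdot)-\tilde a(t) = (a(t,\bigcdot)-\dashint_{K_x}a(t,\bigcdot)) + (\dashint_{K_x}a(t,\bigcdot)-\tilde a(t))$, the first summand is bounded by $h_x(K)\lVert\nabla_x a(t,\bigcdot)\rVert_{L^2(K_x)}$ by the mean-value Poincar\'e inequality, and the second is a constant in $x$ whose modulus equals $\big|\int_{K_x}(a(t,\bigcdot)-\dashint_{K_x}a(t,\bigcdot))(|K_x|^{-1}-\phi)\,\mathrm{d}x\big| \le \lVert a(t,\bigcdot)-\dashint_{K_x}a(t,\bigcdot)\rVert_{L^2(K_x)}\lVert |K_x|^{-1}-\phi\rVert_{L^2(K_x)} \lesssim |K_x|^{-1/2}h_x(K)\lVert\nabla_x a(t,\bigcdot)\rVert_{L^2(K_x)}$. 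Multiplying by $|K_x|^{1/2}$ and integrating in $t$ gives $\lVert a-\tilde a\rVert_{L^2(K)}^2 \lesssim h_x(K)^2\lVert\nabla_x a\rVert_{L^2(K)}^2$, with constant depending only on the shape regularity in space.

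The key step is the second term. Testing the distributional identity $\partial_t a = \textup{div}_x\, G$ against product functions $\psi(t)\phi(x)$ with $\psi\in C_c^\infty(K_t)$ shows $\tilde a\in W^{1,1}(K_t)$ with weak derivative $\tilde a'(t) = -\int_{K_x} G(t,\bigcdot)\cdot\nabla_x\phi\,\mathrm{d}x$; H\"older in space and the bound on $\nabla_x\phi$ yield $|\tilde a'(t)| \le \lVert\nabla_x\phi\rVert_{L^{p'}(K_x)}\lVert G(t,\bigcdot)\rVert_{L^p(K_x)} \lesssim |K_x|^{-1/p}h_x(K)^{-1}\lVert G(t,\bigcdot)\rVert_{L^p(K_x)}$. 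Since $\tilde a$ is absolutely continuous, $\lVert\tilde a-c\rVert_{L^\infty(K_t)}\le \int_{K_t}|\tilde a'|\,\mathrm{d}t$, and H\"older in time bounds $\int_{K_t}\lVert G(t,\bigcdot)\rVert_{L^p(K_x)}\,\mathrm{d}t \le h_t(K)^{1/p'}\lVert G\rVert_{L^p(K)}$. Hence $\lVert\tilde a-c\rVert_{L^2(K)}^2 = |K_x|\,|K_t|\,\lVert\tilde a-c\rVert_{L^\infty(K_t)}^2 \lesssim |K|\,|K_x|^{-2/p}h_x(K)^{-2}h_t(K)^{2/p'}\lVert G\rVert_{L^p(K)}^2$; using $|K_t| = h_t(K)$, $h_t(K)^{2/p'} = h_t(K)^2 h_t(K)^{-2/p}$ and $|K_x|^{-2/p}h_t(K)^{-2/p} = |K|^{-2/p}$, this equals $h_t(K)^2 h_x(K)^{-2}|K|^{1-2/p}\lVert G\rVert_{L^p(K)}^2$. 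Adding the two contributions gives the claim. The main obstacle is precisely this last step: one cannot differentiate $a$ in time, and replacing $\tilde a$ by the plain space average $\dashint_{K_x}a(t,\bigcdot)$ would force a boundary trace of $G$, unavailable for $G\in L^p$ — the smooth compactly supported weight $\phi$ is exactly what legitimizes the distributional test and transfers the time regularity onto $G$ at the cost of the factor $h_t(K)/h_x(K)$.
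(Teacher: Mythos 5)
Your proof is correct, and it takes a genuinely different route from the paper: the paper does not prove the lemma at all, but simply cites it as a special case of Lemma~2.9 in Diening--Schwarzacher--Stroffolini--Verde, a general parabolic \Poincare{} inequality established there in much greater generality. Your argument is a self-contained, elementary derivation adapted to the tensor-product cell $K=K_t\times K_x$: minimality of the integral mean, the slice-wise spatial \Poincare{} inequality, and---the essential point---a smooth spatial weight $\phi\in C_c^\infty(\textup{int}(K_x))$ with unit integral, so that $\psi(t)\phi(x)$ is an admissible test function for $\partial_t a=\textup{div}_x\,G$ and the time oscillation of the weighted mean $\tilde a$ is converted into $h_x^{-1}h_t^{1/p'}|K_x|^{-1/p}\lVert G\rVert_{L^p(K)}$; the scalings $\lVert \phi\rVert_{L^\infty}\lesssim |K_x|^{-1}$, $\lVert\nabla_x\phi\rVert_{L^\infty}\lesssim |K_x|^{-1}h_x(K)^{-1}$ and the final bookkeeping leading to $h_t(K)^2h_x(K)^{-2}|K|^{1-2/p}\lVert G\rVert_{L^p(K)}^2$ all check out, and your closing remark correctly identifies why the compactly supported weight (rather than the plain spatial average) is what makes the distributional testing legitimate for $G\in L^p$ only. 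In substance this is the same mechanism as in the cited reference (weighted means plus testing the equation), but your version buys a short, verifiable proof whose constant visibly depends only on the shape regularity in space, at the price of less generality. Two cosmetic points: the displayed ``$=$'' in $\lVert\tilde a-c\rVert_{L^2(K)}^2=|K_x|\,|K_t|\,\lVert\tilde a-c\rVert_{L^\infty(K_t)}^2$ should be ``$\leq$''; and you tacitly use that $\nabla_x a\in L^2(K)$ and $G\in L^p(K)$ (otherwise the estimate is vacuous) together with the standard slicing fact that $a(t,\cdot)\in W^{1,1}(K_x)$ with gradient $(\nabla_x a)(t,\cdot)$ for a.e.\ $t$---both fine, but worth a sentence.
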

\begin{proof}
This result is a special case of Lemma 2.9 in \cite{DieningSchwarzacherStroffoliniVerde17}.
\end{proof}
Suppose the function $a$ in Lemma \ref{lem:ParaPoincare} equals a gradient field $\nabla_x v$ for some $v\in L^2(\mathcal{I};H^1_0(\Omega))$. Then the identity $\partial_t \nabla_x v = \nabla_x \partial_t v = \textup{div}_x(\partial_t v\, I_{d})$ with identity matrix $I_{d} \in \mathbb{R}^{d\times d}$ shows for all $K\in \mathcal{T}$
\begin{align*}
\lVert \nabla_x v - \mathcal{P}_{\mathbb{T}_0(K)} \nabla_x v \rVert^2_{L^2(K)} \lesssim h_x(K)^2 \lVert \nabla_x^2 v \rVert_{L^2(K)}^2 + \frac{h_t(K)^2}{h_x(K)^2}|K|^{1-2/p} \lVert \partial_t v\rVert_{L^p(K)}^{2}. 
\end{align*}
Notice that the time derivative of the solution $u \in L^2(\mathcal{I};H^1_0(\Omega))$ to the heat equation \eqref{eq:secOrder} satisfies $\partial_t u =  f + \Delta_x u$. This identity results in 
\begin{align}\label{eq:term1}
\begin{aligned}
&\lVert \nabla_x u - \mathcal{P}_{\mathbb{T}_0(K)} \nabla_x u \rVert^2_{L^2(K)}\\
&\qquad\qquad \lesssim h_x(K)^2 \lVert \nabla_x^2 u \rVert_{L^2(K)}^2 + \frac{h_t(K)^2}{h_x(K)^2}|K|^{1-2/p} \lVert f + \Delta_x u\rVert_{L^p(K)}^{2}. 
\end{aligned}
\end{align}
Since the term on the left-hand side of the previous inequality occurs in the error $\lVert (\bfu,\bfs) - (\bfu_h,\bfs_h) \rVert_X$, we expect bad rates of convergence for rough right-hand sides $f$ for equally scaled mesh refinements. In particular, we obtain in \eqref{eq:term1} with $p=2$ terms of equal order, if $h_t(K) = h_x(K)^2$, i.e., for parabolically scaled meshes.
\section{Numerical Experiments}\label{sec:NumExp}
This section investigates numerically the practical DPG scheme 
\eqref{eq:DiscProbAbstr} with the discretization $X_h$ and $Y_h$ introduced in Section \ref{sec:Discretization}. In all experiments the underlying domain reads $Q = \mathcal{I} \times \Omega$ with $\mathcal{I}=\Omega = (0,1)$ and the right-hand side $\bff = (f,0)^\top$ with $f\in L^2(Q)$.
We plot errors and residuals against the degrees of freedom $\textup{ndof} \coloneqq \dim \Gamma_h(\partial \mathcal{T}) = \dim V_h + \# \mathcal{F}_c^x$.
Remark \ref{rem:SkeletonReduction} justifies this definition. We use  the \textit{QrefineR} routine in \cite{FunkenSchmidt20} to compute equally scaled meshes and we use a modified version of this routine to compute parabolically scaled meshes.
Besides uniform mesh refinements, we investigate an adaptive scheme with D\"orfler marking strategy. The refinement indicators result from the built-in error control \eqref{eq:Aposteriori}.
More precisely, let $(\eta_h,\vartheta_h) \in Y_h$ denote the computable residual that solves, with exact solution $(\bfu,\bfs) \in X$ to \eqref{eq:VarProb} and discrete solution $(\bfu_h,\bfs_h) \in X_h$ to \eqref{eq:DiscProbAbstr}, the problem
\begin{align*}
\langle \eta_h , \vartheta_h; \bfw_h,\xi_h\rangle_Y = b(\bfu_h - \bfu,\bfs_h-\bfs;\bfw_h,\xi_h) \qquad\text{for all }(\bfw_h,\xi_h) \in Y_h.
\end{align*}
Notice that $\vartheta_h \in \mathbb{P}_1(\mathcal{T}_0)$ satisfies by definition  
$\langle \vartheta_h,\xi_h\rangle_{L^2(\Omega)} = \langle \gamma_0 \bfu_h - u_0,\xi_h\rangle_{L^2(\Omega)}$ for all $\xi_h \in \mathbb{P}_1(\mathcal{T}_0)$. Hence, the Pythagorean theorem implies 
\begin{align*}
\lVert u_0 - \gamma_0 \bfu_h \rVert_{L^2(K_0)}^2 = \lVert \vartheta_h \rVert_{L^2(K_0)}^2 + \lVert u_0 - \mathcal{P}_{\mathbb{P}_1(K_0)} u_0\rVert_{L^2(K_0)}^2 \qquad\text{for all }K_0\in \mathcal{T}_0. 
\end{align*}
This observation, the a posteriori error estimate in \eqref{eq:Aposteriori}, and the upper bound for the data approximation error in Theorem \ref{thm:DataApx} lead for all $K\in \mathcal{T}$ to the error indicator
\begin{align*}
\eta^2(K) &\coloneqq \lVert \eta_h \rVert_{H(A^*,K)}^2 + h_x(K)^2 \lVert \mathcal{P}_{\mathbb{P}_0 L^2} (f - \mathcal{P}_{\mathbb{T}_0(K)}f) \rVert_{L^2(K)}^2\\
&\qquad  + \lVert u_0 - \gamma_0 \bfu_h \rVert_{L^2(K \cap \lbrace 0 \rbrace \times \Omega)}^2 + \lVert f - \mathcal{P}_{\mathbb{P}_0 L^2} f \rVert^2_{L^2(K)}.
\end{align*}
These error indicators result in the upper bound
\begin{align*}
\lVert (\bfu,\bfs) - (\bfu_h,\bfs_h) \rVert_X^2 \lesssim \eta^2(\mathcal{T}) \coloneqq \sum_{K\in \mathcal{T}} \eta^2(K).
\end{align*}
\subsection{Experiment 1 (Known Solution)}\label{sec:Exp1new}
In this experiment the solution reads 
\begin{align*}
u(t,x) = t^2 x(1-x)\qquad\text{for all }(t,x)\in Q = (0,1)^2.
\end{align*}
We use uniform mesh refinements resulting in equally and parabolically scaled meshes. 
As expected by the theory in Section \ref{sec:errorControl}, the  equally scaled refinement results in the rate of convergence $\mathcal{O}(\textup{ndof}^{-1})$ and the parabolically scaled refinement results in the rate of convergence $\mathcal{O}(\textup{ndof}^{-2/3})$  for the squared residuals and errors. Rather than computing the minimal trace extension norm $\lVert \bfs - \bfs_h \rVert_{\Gamma(\partial \mathcal{T})}$, Figure~\ref{fig:Exp1new} displays the error $\lVert u - \hat{u}_h\rVert_{L^2(Q)}^2$, where $\hat{u}_h \in V_h$ with $\gamma_A(\hat{u}_h,0) + \sigma_0 = \bfs_h$ for some $\sigma_0 \in \Gamma_h^{\textup{div}_x}(\partial\mathcal{T})$ (cf.\ Remark \ref{cor:Norms} and the discretization of the trace in Section \ref{sec:Discretization}).

\begin{figure}
{\centering
\begin{tikzpicture}
\begin{axis}[
clip=false,
width=.5\textwidth,
height=.45\textwidth,
xmode = log,
ymode = log,
cycle multi list={\nextlist MyColors},
scale = {1},
xlabel={ndof},
clip = true,
legend cell align=left,
legend style={legend columns=1,legend pos= outer north east,font=\fontsize{7}{5}\selectfont}
]
	\addplot table [x=ndof,y=residual_sq] {Experiments/Data_Exp1.txt};
	\addplot table [x=ndof,y=errU0] {Experiments/Data_Exp1.txt};
	\addplot table [x=ndof,y=L2errU] {Experiments/Data_Exp1.txt};
	\addplot table [x=ndof,y=L2errUhat] {Experiments/Data_Exp1.txt};		
	\addplot table [x=ndof,y=L2errorTau] {Experiments/Data_Exp1.txt};
	\addplot[dash dot,sharp plot,update limits=false] coordinates {(2e1,2e-2) (2e5,2e-6)};\label{plot:9}
\end{axis}
\end{tikzpicture}
\begin{tikzpicture}
\begin{axis}[
clip=false,
width=.5\textwidth,
height=.45\textwidth,
xmode = log,
ymode = log,
xlabel={ndof},
cycle multi list={\nextlist MyColors},
scale = {1},
clip = true,
legend cell align=left,
legend style={legend columns=1,legend pos= outer north east,font=\fontsize{7}{5}\selectfont}
]
	\addplot table [x=ndof,y=residual_sq] {Experiments/Data_Exp1_parabolic.txt}; \label{plot:1}
	\addplot table [x=ndof,y=errU0] {Experiments/Data_Exp1_parabolic.txt};\label{plot:2}
	\addplot table [x=ndof,y=L2errU] {Experiments/Data_Exp1_parabolic.txt};\label{plot:3}
	\addplot table [x=ndof,y=L2errUhat] {Experiments/Data_Exp1_parabolic.txt};		\label{plot:4}
	\addplot table [x=ndof,y=L2errorTau] {Experiments/Data_Exp1_parabolic.txt};\label{plot:5}
	\addplot[dashed,sharp plot,update limits=false] coordinates {(2e1,1e-2) (2e7,1e-6)};\label{plot:6}
\end{axis}
\end{tikzpicture}
\caption{Convergence history plots for the squared residual $\eta^2(\mathcal{T})$ (\ref{plot:1}) and the squared errors $\lVert \gamma_0(u - \hat{u}_h) \rVert^2_{L^2(\Omega)}$ (\ref{plot:2}),  $\lVert u - u_h \rVert_{L^2(Q)}^2$ (\ref{plot:3}), $\lVert u - \hat{u}_h \rVert_{L^2(Q)}^2$ (\ref{plot:4}), and $\lVert \sigma - \sigma_h \rVert_{L^2(Q)}^2$ (\ref{plot:5}) for Experiment 1 (Known Solution) with equally scaled (left) and parabolically scaled refinement (right). The dash dotted line (\ref{plot:9}) indicates the rate $\mathcal{O}(\textup{ndof}^{-1})$ and dashed line (\ref{plot:6}) indicates the rate $\mathcal{O}(\textup{ndof}^{-2/3})$.} \label{fig:Exp1new}}
\end{figure}
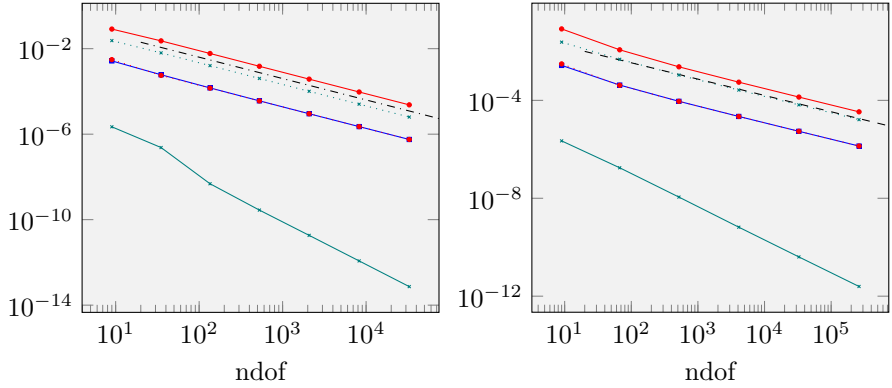
\subsection{Experiment 2 (Checkerboard)}\label{sec:Exp3new}
This experiment investigates our numerical scheme with jumping right-hand side $f:Q \to \lbrace -1,1\rbrace$ displayed in Figure \ref{fig:Exp3new} and initial data $u_0 = 0$. Since the initial data is smooth, maximal regularity \cite{Fackler17} suggests
$\partial_t u \in L^p(Q)$ for all $p<\infty$ and $\Delta_x u \in L^2(Q)$, which implies $\nabla^2_x u \in L^2(Q;\mathbb{R}^d)$.
Hence, the upper bound in \eqref{eq:term1} suggests the optimal order of convergence. Indeed, the convergence history plot in Figure \ref{fig:Exp3new} shows that the residual $\eta^2(\mathcal{T})$ converges with the optimal rate $\mathcal{O}(\textup{ndof}^{-2/3})$ for adaptive and uniform parabolically scaled refinements.
The rate of convergence with adaptive and uniform equally scaled refinements reads $\mathcal{O}(\textup{ndof}^{-1})$.
\begin{figure}
{\centering
\begin{tikzpicture}
\begin{axis}[
clip=false,
width=.5\textwidth,
height=.45\textwidth,
xmode = log,
ymode = log,
xlabel={ndof},
cycle multi list={\nextlist MyColors1c},
scale = {1},
clip = true,
legend cell align=left,
legend style={legend columns=1,legend pos= south west,font=\fontsize{7}{5}\selectfont}
]
\addplot table [x=ndof,y=residual_sq] {Experiments/Data_Exp4_uniform.txt};
\addplot table [x=ndof,y=residual_sq] {Experiments/Data_Exp4_parabolic_uniform.txt};
\addplot[dash dot,sharp plot,update limits=false] coordinates {(2e1,3.6e-2) (2e5,3.6e-6)};
\addplot[dotted,sharp plot,update limits=false] coordinates {(1e1,2e-2) (1e7,2e-6)};
\addplot table [x=ndof,y=residual_sq] {Experiments/Data_Exp4_adaptive.txt};
\addplot table [x=ndof,y=residual_sq] {Experiments/Data_Exp4_parabolic_adaptive.txt};	

%\addplot[dashed,sharp plot,update limits=false] coordinates {(1e1,2e-2) (1e7,2e-7)};

	\legend{
	{equal},
	{parabolic},
	{$\mathcal{O}(\textup{ndof}^{-1}$)},
	{$\mathcal{O}(\textup{ndof}^{-2/3}$)}};
\end{axis}
\end{tikzpicture}
\begin{tikzpicture}[scale=1.8]
\draw[step=0.5cm,color=gray] (-1,-1) grid (1,1);
\node at (-0.75,+0.75) {-1};
\node at (-0.25,+0.75) {1};
\node at (+0.25,+0.75) {-1};
\node at (+0.75,+0.75) {1};
\node at (-0.75,+0.25) {1};
\node at (-0.25,+0.25) {-1};
\node at (+0.25,+0.25) {1};
\node at (+0.75,+0.25) {-1};
\node at (-0.75,-0.25) {-1};
\node at (-0.25,-0.25) {1};
\node at (+0.25,-0.25) {-1};
\node at (+0.75,-0.25) {1};
\node at (-0.75,-0.75) {1};
\node at (-0.25,-0.75) {-1};
\node at (+0.25,-0.75) {1};
\node at (+0.75,-0.75) {-1};
\coordinate[label=below:$t$] (A) at (1.2,-1);
\draw[->](-1,-1) to (A);
\coordinate[label=left:$x$] (B) at (-1,1.2);
\draw[->](-1,-1) to (B);
\coordinate[label=below:$.75$] (A2) at (.5,-1);
\coordinate[label=below:$.5$] (A3) at (0,-1);
\coordinate[label=below:$.25$] (A4) at (-.5,-1);
\coordinate[label=left:$.75$] (A2a) at (-1,.5);
\coordinate[label=left:$.5$] (A3a) at (-1,0);
\coordinate[label=left:$.25$] (A4a) at (-1,-.5);
\coordinate[label=left: ] (A4d) at (-1.4,-1.5);
\coordinate[label=below:$f$ ] (A4ds) at (0,-1.3);
\end{tikzpicture}
\caption{Convergence history plot of the squared residual $\eta^2(\mathcal{T})$ with equally and parabolically scaled as well as uniform (solid line) and adaptive (dotted line) mesh refinements for Experiment 2 in Section \ref{sec:Exp3new} (Checkerboard) and the right-hand side $f:Q\to \lbrace -1, 1\rbrace$.}\label{fig:Exp3new}}
\end{figure}
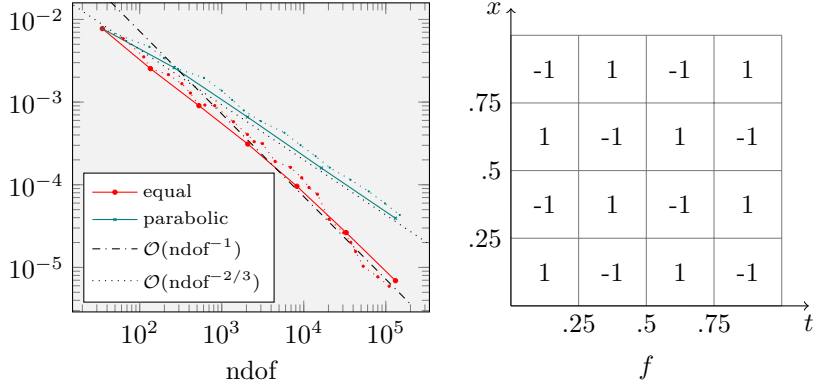
\subsection{Experiment 3 (Irregular Initial Data)}\label{sec:Exp4new}
In this experiment the initial data and the right-hand side read for all $(t,x)\in Q= (0,1)^2$ 
\begin{align*}
u_0(x) = \begin{cases}
-1&\text{for }x<1/2,\\
1&\text{else }
\end{cases}
\qquad\text{and}\qquad
f(t,x) = 0.
\end{align*}
To provide some educated guess for the regularity of $u$, we consider the problem on the entire space $\mathbb{R}$, that is, we solve $\partial_t \hat{u} - \partial_x^2 \hat{u} = 0$ in $\mathbb{R}_{>0}\times \mathbb{R}$ with initial data $\hat{u}(0,\bigcdot) = \sgn$. 
By convolution of the heat kernel \cite[Sec.\ 2.3.1]{Evans10} we get (with Gauss error function $\erf(x) = 2/\sqrt{\pi} \int_0^x \exp(-y^2)\,\mathrm{d}y$) 
\begin{align*}
&\hat{u}(t,x) = \frac{1}{\sqrt{4\pi t}} \int_\mathbb{R} \exp\left(-\frac{(x-y)^2}{4t} \right)\sgn(y)\,\mathrm{d}y\\
& \quad = \frac{2}{\sqrt{4\pi t}} \int_{0}^x \exp\left(-\frac{(x-y)^2}{4t} \right)\,\mathrm{d}y = \erf\left(\frac{x}{2\sqrt{t}}\right)\qquad\text{for all }(t,x) \in \mathbb{R}_{>0}\times \mathbb{R}.
\end{align*}
Differentiation yields for all $(t,x) \in \mathbb{R}_{>0}\times \mathbb{R}$
\begin{align*}
\partial_t \hat{u}(t,x) = \partial_x^2 \hat{u}(t,x) = -\frac{x\exp(-x^2/(4t))}{2t\sqrt{t\pi}} \quad\text{and}\quad \partial_x \hat{u}(t,x) = \frac{\exp(-x^2/(4t))}{\sqrt{t\pi}}.
\end{align*}
Due to the point singularity in $x=t=0$, the function $\partial_t \hat{u}$ (and so $\partial_t u$) is not in $L^2(Q)$. 
Figure \ref{fig:Exp4new} displays the convergence history plot. 
For uniform mesh refinements it indicates the rate of convergence $\eta^2(\mathcal{T}) = \mathcal{O}(\textup{ndof}^{-1/3})$ for parabolically scaled refinements and $\eta^2(\mathcal{T}) = \mathcal{O}(\textup{ndof}^{-1/4})$ for equally scaled refinements.
Such poor convergence results are observed for similar problems by Andreev \cite[Fig.\ 3]{Andreev13} as well as F\"uhrer and Karkulik \cite[Sec.\ 5.2.3]{FuehrerKarkulik19}. 
Indeed, the observed rate for equally scaled mesh refinements equals the observed rate of convergence in \cite[Sec.\ 5.2.3]{FuehrerKarkulik19} derived with a LSFEM scheme. In contrast to the LSFEM scheme, adaptivity improves the convergence of our space-time DPG method significantly. The convergence history plot suggests the convergence $\eta^2(\mathcal{T}) = \mathcal{O}(\textup{ndof}^{-1})$ for adaptive parabolically scaled refinements and a slightly worse rate for adaptive equally scaled refinements. Notice that the computation with equally scaled mesh refinements experiences a significantly larger pre-asymptotic regime than the computation with parabolically scaled mesh refinements.
\begin{figure}
{\flushright
\begin{minipage}{.5\textwidth}
\includegraphics[width=1.2\textwidth]{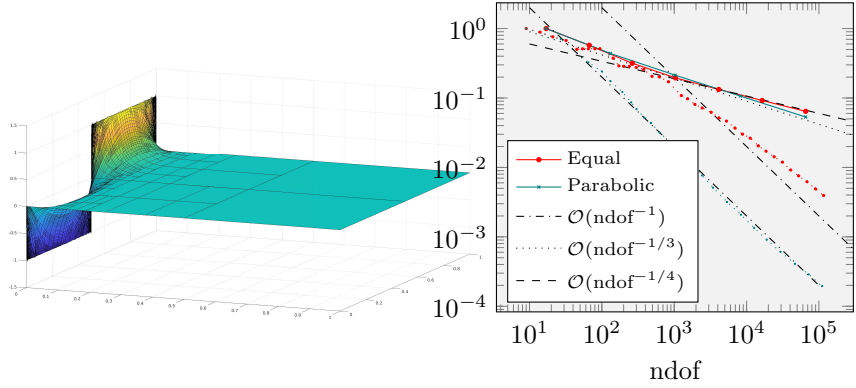}
\end{minipage}
\hspace*{-.3cm}
\begin{minipage}{.5\textwidth}
\begin{tikzpicture}
\begin{axis}[
clip=false,
width=\textwidth,
height=.9\textwidth,
xmode = log,
ymode = log,
xlabel={ndof},
cycle multi list={\nextlist MyColors1bc},
scale = {1},
clip = true,
legend cell align=left,
legend style={legend columns=1,legend pos= south west,font=\fontsize{7}{5}\selectfont}
]
	\addplot table [x=ndof,y=residual_sq] {Experiments/Data_Exp3_uniform.txt};
	\addplot table [x=ndof,y=residual_sq] {Experiments/Data_Exp3_parabolic_uniform.txt};
	\addplot[dash dot,sharp plot,update limits=false] coordinates {(1e2,2e0) (1e6,2e-4)};	
	\addplot[dotted,sharp plot,update limits=false] coordinates {(1e1,9e-1) (1e7,9e-3)};  
	\addplot[dashed,sharp plot,update limits=false] coordinates {(1e1,6e-1) (1e9,6e-3)};  
	\addplot table [x=ndof,y=residual_sq] {Experiments/Data_Exp3_adaptive.txt};
	\addplot table [x=ndof,y=residual_sq] {Experiments/Data_Exp3_parabolic_adaptive.txt};	
		\addplot[dash dot,sharp plot,update limits=false] coordinates {(1e1,2e0) (1e5,2e-4)};
	\legend{
	{Equal},
	{Parabolic},
	{$\mathcal{O}(\textup{ndof}^{-1}$)},
	{$\mathcal{O}(\textup{ndof}^{-1/3}$)},
	{$\mathcal{O}(\textup{ndof}^{-1/4}$)}};
\end{axis}
\end{tikzpicture}
\

\end{minipage}}
\caption{Solution $\hat{u}_h \in V_h$ with underlying partition $\mathcal{T}$ and $\textup{ndof} = 5385$ as well as the convergence history of the residual $\eta^2(\mathcal{T})$ with uniform mesh refinements (solid line) and adaptive mesh refinements (dotted lines) with equally and parabolically scaled mesh refinements for Experiment 3 (Irregular Initial Data).}\label{fig:Exp4new}
\end{figure}
\subsection{Experiment 4 (Rough Right-Hand-Side)}\label{sec:Exp4}
In this experiment the initial data equals $u_0 = 0$. We investigate our numerical scheme with two singular right-hand sides. They read for all $(t,x)\in Q$ and parameters $\alpha \leq 0$ 
\begin{align*}
\begin{aligned}
\bff &= (f_x^\alpha,0)&&\quad\text{with }f_x^\alpha(t,x) = |x-1/2|^\alpha, \\
\bff & = (f_t^\alpha,0)&&\quad\text{with }f_t^\alpha(t,x) = |t-1/2|^\alpha.
\end{aligned}
\end{align*}
Since the right-hand side satisfies $f\in L^{|1/\alpha| - \varepsilon}(Q)$, we expect the maximal regularity $\partial_t u,\Delta_x u \in L^{|1/\alpha| - \varepsilon}(Q)$ for all $\alpha \leq 0$ and $\varepsilon>0$ \cite{Fackler17}. 
Therefore, the estimate in \eqref{eq:term1} suggest the failure for equally refined meshes as $\alpha \searrow 1/2$.
Table \ref{tab:RatesOfConv} displays the rates of convergence of the  squared residual $\lVert (\eta_h,\vartheta_h) \rVert_Y^2$ for various values of $\alpha$ and the right-hand sides $f_x^\alpha$ (``Singular in Space'') and $f_t^\alpha$ (``Singular in Time''). We compute the rates via linear regression of the log log convergence history plot, where the number of degrees of freedom on the initial partition $\mathcal{T}$ equals $\textup{ndof} = 9$ and
is larger than $10^5< \textup{ndof}$ on the finest partition.
We observe that the singularity causes severe difficulties for equally scaled mesh refinements in the sense that the rate of convergence goes for uniform and adaptive mesh refinements to zero as $\alpha \searrow-1/2$ (the limiting cases for $\bff \not\in L^2(Q;\mathbb{R}^{d+1})$). The parabolically scaled scheme works significantly better: the rate of convergence for uniform and adaptive refinements is above $2/3$ for $\alpha \geq -1/2$, which is the optimal rate for smooth solutions. 
This underlines the theoretical considerations in Section \ref{sec:EquiVsPara}.
Notice that we have to add the data approximation error (not displayed in this paper) to the residuals in Table \ref{tab:RatesOfConv} to conclude decay rates for the error. For the right-hand side $f_x^\alpha$ and $\alpha\geq -0.4$ the data approximation error for the parabolically scaled uniform and adaptive refinements converges with a rate faster than $2/3$, for $\alpha = -0.5$ the rate equals $0.55$. For the right-hand side $f_t^\alpha$ the convergence rate of the data approximation error goes to zeros as $\alpha \searrow -1/2$.
Nevertheless, this experiment indicates that parabolically scaled mesh refinements are obligatory for rough solutions.

\begin{table}
{\centering
\begin{tabular}{c|c|c|c|c|c|c|c|c|c}
\multicolumn{1}{c}{}&\multicolumn{4}{c|}{Singular in Space}& & \multicolumn{4}{c}{Singular in Time}\\
 \multicolumn{1}{c}{}&\multicolumn{2}{c|}{Equal}& \multicolumn{2}{c|}{Parabolic}& &\multicolumn{2}{c|}{Equal}& \multicolumn{2}{c}{Parabolic}\\ \hline
$\alpha$& unif& adapt&  unif& adapt&&unif& adapt&  unif& adapt \\ \hline
0.00&1&	1.01&	0.66&	0.66&& 1&	1.01&	0.66&	0.66\\
-0.05	&	0.81	&	0.93	&	0.67	&	0.67	&	&	0.81	&	0.93	&	0.66	&	0.66	\\
-0.1	&	0.6	&	0.83	&	0.67	&	0.68	&	&	0.59	&	0.82	&	0.67	&	0.65	\\
-0.15	&	0.46	&	0.71	&	0.67	&	0.7	&	&	0.45	&	0.7	&	0.67	&	0.67	\\
-0.2	&	0.36	&	0.6	&	0.68	&	0.71	&	&	0.36	&	0.59	&	0.67	&	0.68	\\
-0.25	&	0.29	&	0.49	&	0.69	&	0.72	&	&	0.28	&	0.48	&	0.68	&	0.69	\\
-0.3	&	0.22	&	0.39	&	0.69	&	0.75	&	&	0.22	&	0.38	&	0.69	&	0.72	\\
-0.35	&	0.17	&	0.29	&	0.69	&	0.76	&	&	0.16	&	0.28	&	0.7	&	0.74	\\
-0.4	&	0.11	&	0.19	&	0.67	&	0.76	&	&	0.11	&	0.18	&	0.7	&	0.77	\\
-0.45	&	0.06	&	0.08	&	0.65	&	0.76	&	&	0.06	&	0.07	&	0.69	&	0.79	\\
-0.5	&	0	&	-0.02	&	0.62	&	0.76	&	&	0	&	-0.03	&	0.65	&	0.81	\\
-0.55	&	-0.05	&	-0.12	&	0.59	&	0.73	&	&	-0.05	&	-0.12	&	0.6	&	0.79	\\
-0.6	&	-0.1	&	-0.21	&	0.55	&	0.71	&	&	-0.1	&	-0.22	&	0.53	&	0.78	\\
-0.65	&	-0.15	&	-0.31	&	0.51	&	0.68	&	&	-0.15	&	-0.32	&	0.47	&	0.64	\\
-0.7	&	-0.2	&	-0.41	&	0.47	&	0.62	&	&	-0.2	&	-0.4	&	0.4	&	0.58	\\
-0.75	&	-0.25	&	-0.5	&	0.43	&	0.56	&	&	-0.25	&	-0.48	&	0.33	&	0.42
\end{tabular}
\caption{Convergence rates of the squared residual $\lVert (\eta_h,\vartheta_h) \rVert_Y^2$ in Experiment 4 (Rough RHS) for various parameters $\alpha$, equally and parabolicly scaled meshes as well as uniform (unif) and adaptive (adapt) mesh refinements.}\label{tab:RatesOfConv}}
\end{table}
\bibliographystyle{amsalpha}
\bibliography{SpaceTimeDPG}

\end{document}